\def\XXint#1#2#3{{\setbox0=\hbox{$#1{#2#3}{\int}$ }
\vcenter{\hbox{$#2#3$ }}\kern-.6\wd0}}
\def\lbb{\mbox{$\,$---}}
\def\Yint#1{\mathchoice
{\XXint\displaystyle\textstyle{#1}}%
{\XXint\textstyle\scriptstyle{#1}}%
{\XXint\scriptstyle\scriptscriptstyle{#1}}%
{\XXint\scriptscriptstyle\scriptscriptstyle{#1}}%
\!\iint}
\def\fiint{\Yint\lbb}
\DeclareMathOperator*{\diam}{diam}
\DeclareMathOperator*{\dist}{dist}
\DeclareMathOperator{\Div}{div}
\DeclareMathOperator{\Average}{Av}
\DeclareMathOperator{\Lip}{Lip}
\newcommand{\ColorWord}[2]{\color{#1} #2 \color{black} }
\newcommand{\WORD}[1]{\quad \text{#1} \quad}
\newcommand{\Z}{{\mathbb Z}}
\newcommand{\BMO}{{\rm BMO}}
\newcommand{\R}{{\mathbb R}}
\newcommand{\EPS}{\varepsilon}
\newcommand{\EMP}{\emptyset}
\newcommand{\HS}{H\hspace{-0.6mm}S}
\renewcommand{\div}{\mathrm{div}}
\newcommand{\Base}{\mathcal{O}}
\newcommand{\D}{\ensuremath{\mathbb{D}}}
\newcommand{\dx}{\ensuremath{\, \mathrm{d} x}}
\newcommand{\dt}{\ensuremath{\, \mathrm{d} t}}
\newcommand{\dtau}{\ensuremath{\, \mathrm{d} \tau}}
\numberwithin{equation}{section}
\theoremstyle{plain}
\newtheorem{theorem}[equation]{Theorem}
\newcommand{\reftheorem}[1]{\emph{\ColorWord{blue}{Theorem} \ref{#1}}}
\newtheorem{lemma}[equation]{Lemma}
\newcommand{\reflemma}[1]{\emph{\ColorWord{blue}{Lemma} \ref{#1}}}
\newtheorem{proposition}[equation]{Proposition}
\newcommand{\refproposition}[1]{\emph{\ColorWord{blue}{Proposition} \ref{#1}}}
\newtheorem{corollary}[equation]{Corollary}
\theoremstyle{definition}
\newtheorem{definition}[equation]{Definition}
\theoremstyle{remark}
\newtheorem{remark}[equation]{Remark}
\title[Dirichlet and Regularity Problems for Parabolic PDEs]{A relation between the Dirichlet  and the Regularity problem for Parabolic equations}
\author{Martin Dindo\v{s}}
\author{Erika Nystr\"om}
\begin{document}
\maketitle




\begin{abstract}
    We study the relationship between the Dirichlet and Regularity problem for parabolic operators
        of the form \( L = \Div(A\nabla\cdot) - \partial_t \) on cylindrical domains
        \( \Omega = \Base \times \R  \), 
        where the base \( \Base \subset \R^{n} \) is a uniform domain with $n-1$-Ahlfors regular boundary (and for one result a Lipschitz domain) in the spatial variables.
    
In the paper we answer the question when the solvability of the $L^p$ Regularity problem for $L$ (denoted by 
 \( (R_L)_{p} \)) can be deduced from the solvability of the \( L^{p'} \) Dirichlet problem for the adjoint operator $L^*$ (denoted \( (D_L^*)_{p'} \)).
  We show that this holds if for at least one $q\in(1,\infty)$ the problem \( (R_L)_{q} \) is solvable.  That is, we establish a duality/dichotomy result: Dirichlet solvability implies Regularity solvability in the dual $L^p$ range, or the Regularity problem is not solvable in any $L^p$. 
 
 Results like these were only known in the elliptic settings (Kenig-Pipher (1993) and 
Shen (2006)) but are new for parabolic PDEs. Our result is one of the key components needed for the recent advancement of Dindo\v{s}, Li and Pipher in understanding solvability of the Regularity problem for operators whose coefficients satisfy certain natural Carleson condition (called also DKP-condition in the elliptic case). 
\end{abstract}

\tableofcontents
\newpage


\section{Introduction}
In this paper we study the relationship between the parabolic Dirichlet and Regularity problems 
    on parabolic cylinders \( \Omega = \Base \times \R \) 
    for the operators of the form
\begin{equation}\label{paro}
 L = \Div(A\nabla \cdot) - \partial_t ,
 \end{equation}
    where \( A = A(X,t) \) 
    is bounded, measurable and uniformly elliptic.\medskip
    
To be precise, for any \( 1<p<\infty \), 
    we wish to show that if the \( L^{p'} \) Dirichlet problem is solvable  (which we denote by \( (D_{L^*})_{p'} \)),
     then the \( L^p \) regularity problem 
     is solvable (denoted by \( (R_L)_p \)) (see  Definitions \ref{def:Dirichlet problem} and \ref{def:regularity problem}), provided this problem is solvable for at least one $q\in (1,\infty)$.
     Note that here \( (D_{L^*})_{p'} \) denotes the solvability for the adjoint 
\[ L^* = \Div(A^T\nabla \cdot) + \partial_t, \]
    which is a backward in time parabolic operator.
Through a change of variables \( v(X,t) = u(X,-t) \), and $\tilde{A}(X,t)=A^T(X,-t)$
    we can see that the solvability of \( (D_{L^*})_{p'} \) is equivalent to solvability of \( (D_{\tilde{L}})_{p'} \)
    where 
\[ \tilde{L}v \coloneqq \Div(\tilde{A}\nabla v)-\partial_tv=0, \]
  is  the usual time-forward parabolic PDE.\medskip

It is important to establish a result of this type for several reasons. 
First, there is the inspiration provided by the elliptic theory:  in particular, the recent breakthroughs in solving the elliptic Regularity problem for operators satisfying certain natural Carleson condition (also called the DKP-condition) in \cite{DHP} and \cite{MPT}. The analogous elliptic duality/dichotomy result has been instrumental in the study of various classes of operators, and the extension of elliptic theory to the parabolic setting follows a natural and historical path.
Second, the parabolic result established here has already played a key role in the study of parabolic Regularity problems.
One of this paper's co-authors has been involved in efforts to solve the Regularity problem for operators satisfying the DKP-condition, which
was achieved in the recent preprint \cite{DLP}.  Our main theorem of this paper was a  {\it critical component} of the strategy of \cite{DLP}. Further, a recent result \cite{DU} showing that solvability of the parabolic Regularity problem is stable under 
large and small Carleson perturbations of coefficients also requires the main result of the present paper.
Finally, our result opens an avenue to establishing solvability of the parabolic Regularity problem for other classes of well studied
and natural parabolic operators once the solvability of the Dirichlet problem is known for data in some $L^p$. For example, for an operator in divergence form whose coefficients are independent of $x_n$-direction (the direction transversal to the boundary on the domain $\Omega=\R^n_+\times\R$), the solvability of the parabolic Dirichlet problem has been
obtained in \cite{AEN}. Only in the elliptic case is the Regularity problem for such operators known to be solvable \cite{HKMP} and it relied on the corresponding elliptic duality/dichotomy result.

\medskip

The elliptic version of the result we present was fully formulated by Shen in  \cite{She06}    on bounded Lipschitz domains. It uses substantially an earlier result of Kenig and Pipher \cite{KP} where it was been established that (\( (R_L)_q \)) implies (\( (R_L)_p \))
for all $1<p<q+\varepsilon$ for some small $\varepsilon>0$.

The converse, that  \( (R_L)_p \) implies \( (D_{L^*})_{p'} \) 
    has already been shown in both the elliptic \cite{DK,KP} and parabolic \cite{DD} settings,
    with the domain in \cite{DD} being a \( \Lip(1,1/2) \) domain.\medskip

Thus, having established the motivation for seeking this result, let us outline the main novelty in our arguments that distinguish it from the earlier elliptic approach.

Let us recall that the main feature that distinguishes parabolic PDEs from the elliptic ones is the presence of the time variable that enjoys different scaling than the remaining spatial variables. This is due to the fact that the second derivative $\nabla^2 u$ behaves like $\partial_t u$ (as follows from the PDE) and hence the first spatial derivative 
$\nabla u$ scales like a half time-derivative $D^{1/2}_tu$.

In particular, the consequence of this fact is then that the parabolic formulation of the Regularity problem requires boundary data with a half-time derivative and full spatial (tangential) gradient (c.f. \eqref{defDPpara}).

This simple observation gives rise to multiple problems we needed to overcome. The first  (and the most difficult problem) was to build a new parabolic version of the end-point ($p=1$) of parabolic Hardy-Sobolev space, one that respects such a scaling but that can also be (real)-interpolated against the parabolic Sobolev spaces (for $p>1$). 
We achieve this objective in section 6. This is by far the longest part of the paper as we had to build the space piece by piece from atoms that respect the desired scaling. The atoms have a free parameter $\beta\in (1,\infty]$, thus potentially obtaining not one $p=1$ Hardy-Sobolev space but a whole scale of them indexed by $\beta$. As in the elliptic case we need to establish that for each $\beta$ we obtain the same space (where a different choice of $\beta$ only changes the norm to an equivalent one). Key for this is the version of the classical \lq\lq Calder\'on-Zygmund lemma" (Proposition \ref{CZ}) but now the presence of the half-time derivative $D^{1/2}_t$ significantly complicates the matter.  In this proposition it is unrealistic to expect for the decomposition 
$f=g+\sum_i b_i$ (where $b_i$ are multiples of atoms) to have $D^{1/2}_tg\in L^\infty$ (even though we expect and obtain $\nabla g\in L^\infty$), as the mapping properties of the half-derivative operator imply that we should (at best) only expect $D^{1/2}_tg\in BMO$. 

As it turns out this opens up another rabbit hole (that we had to dive into), as proving that a half-derivative - a rather complicated non-local object -  belongs to BMO is quite a challenge. We needed to establish a new version of this condition (originally due to Strichartz \cite{S80} and adapted to parabolic case in \cite{DDH}) in Theorem \ref{T:equiv-new}. Our condition is different from the one established in \cite{DDH} and it is more convenient to work with in the light of the new characterization of Trebel-Lizorkin spaces established in \cite{YYZ}.
Finally, after Proposition \ref{CZ} is established we obtain both equivalence of the newly defined spaces for different parameters $\beta$ as well as the desired real interpolation properties using the $K$-method.\medskip

Our second important challenge was to adapt our argument to work both on domains that are bounded or unbounded in space (they are always unbounded in time variable). In section 5.2 we present a rather sophisticated localization arguments that show that key challenge are the estimates of non-tangential maximal function near support of the boundary data as \lq\lq away parts" enjoy better bounds due to exponential decay. The paper \cite{DLP} which uses our result exploits this decomposition even further.\medskip

The original formulation of the parabolic Regularity problem is due to Fabes and Rivi\`ere \cite{FR} who formulated it for $C^1$ cylinders which was then re-formulated to Lipschitz cylinders by Brown \cite{Bro87, Bro89}. Lipschitz cylinders are domains of the form $\Omega=\mathcal{O}\times [0,T]$ where $\mathcal O\subset\mathbb {R}^n$ is a bounded Lipschitz domain  and time variable runs over a bounded interval $[0,T]$. 
Subsequent authors such as Mitrea \cite{M}, Nystr\'om \cite{Nys06} or 
Castro-Rodr\'iguez-L\'opez-Staubach \cite{CRS} have dealt with this issue in a similar way.

What Brown has done was to consider solutions on $\Omega$ with initial data $u=0$ at $t=0$ and naturally extending $u$ by zero for all $t<0$. Then a definition of half-derivative using fractional integrals $I_\sigma$ 
for functions $f\in C^\infty(-\infty,T)$ is used to give meaning to $D^{1/2}_t f$ and $D^{1/2}_t u$. 

A variant of this approach (which we present here) is to consider the parabolic PDE on on infinite cylinder 
$\Omega=\mathcal O\times \mathbb R$. This can be always be achieved by extending the coefficients by setting
$A(X,t)=A(X,T)$ for $t>T$ and $A(X,t)=A(X,0)$ for $t<0$. Since the parabolic PDE has a defined direction of time the solution on the infinite cylinder $\Omega=\mathcal O\times \mathbb R$ with boundary data $u\big|_{\partial\Omega}=0$ for $t<0$ will coincide with the solution on the finite cylinder $\mathcal O\times [0,T]$ with zero initial data.\vglue1mm

Having a well-defined half-derivative the authors \cite{Bro89,CRS,M, Nys06} then define the parabolic Regularity problem by asking for
\begin{equation}\label{defDPpara}
\|N[\nabla u]\|_{L^p(\partial\Omega)}+\|N[D^{1/2}_tu]\|_{L^p(\partial\Omega)}\le C(\|\nabla_T f\|_{L^p(\partial\Omega)}+\|D^{1/2}_t f\|_{L^p(\partial\Omega)}).
\end{equation}
Some authors also ask for control of $\|N[D^{1/2}_tH_tu]\|_{L^p}$, where $H_t$ is the one-dimensional Hilbert transform in the $t$-variable.

This approach is reasonable, if the domain is not time-varying, i.e. of the form $\mathcal O\times (t_0,t_1)$ but 
runs immediately into an obvious issue when this is no longer the case, as any reasonable definition of half-derivative requires $u(X,t)$ to be defined for all $t\in\mathbb R$ or at least on a half-line.\vglue1mm

This is the reason we restrict ourselves to domains of the form $\mathcal O\times \mathbb R$ here to avoid further complications connected with dealing with this non-local term. In particular, in \cite{DLP}, that uses our result, only  domains of this type are considered.\medskip

Finally, we are ready to state our main result which we split into two theorems as there are slightly different assumptions on the domain in each case.
\begin{theorem}\label{thm:Dp_implies_Rq} Let $\mathcal O$ be a uniform domain (i.e. it satisfies interior Corkscrew and Harnack chain conditions) with $n-1$-Ahlfors regular boundary (that is $\mathcal H^{n-1}(B(x,r)\cap \partial\mathcal O)\approx r^{n-1}$)
 and $L$ be a parabolic operator of the form \eqref{paro} on the domain $\Omega=\mathcal O\times \mathbb R$.

    Suppose that the \( L^{p'} \) Dirichlet problem for the adjoint operator, \( (D_{L^*})_{p'} \), is solvable,
        and that the regularity problem \( (R_L)_q \) is solvable for some \( 1<q<p \).
    Then \( (R_L)_p \) is also solvable.
\end{theorem}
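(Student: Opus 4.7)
The plan is to adapt Shen's real-variable method \cite{She06} from the elliptic to the parabolic setting, using the adjoint Dirichlet hypothesis to supply a good localized $L^{p'}$-duality bound and the standing $(R_L)_q$ to supply a weak $L^q$ control. Concretely, define the sublinear operator $\mathcal{T}f := N(\nabla u) + N(D_t^{1/2} u)$, where $u$ is the solution with boundary data $f$ produced by $(R_L)_q$. The goal is to verify the hypotheses of a parabolic Shen-type real-variable lemma which upgrades boundedness of $\mathcal{T}$ from $L^q$ to $L^p$ on the appropriate parabolic boundary Sobolev space.

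The main technical step is a localized reverse-H\"older inequality on parabolic surface balls. Fix a parabolic surface ball $\Delta = \Delta_r(x_0,t_0) \subset \partial\Omega$ and decompose, in a manner adapted to the non-local half-derivative, $f = f_{\mathrm{loc}} + f_{\mathrm{far}}$ with $f_{\mathrm{loc}}$ essentially supported in a dilate $C\Delta$. Correspondingly write $u = u_{\mathrm{loc}} + u_{\mathrm{far}}$. For $u_{\mathrm{loc}}$, the assumption $(R_L)_q$ together with $L^p\subset L^q_{\mathrm{loc}}$ gives the \emph{bad} term
\[
\left(\fint_{\Delta} |\mathcal{T}(f_{\mathrm{loc}})|^{q}\,d\sigma\right)^{1/q} \lesssim \left(\fint_{C\Delta}(|\nabla_T f|^p + |D_t^{1/2} f|^p)\,d\sigma\right)^{1/p},
\]
while for $u_{\mathrm{far}}$, inside $\Delta$ the solution is smooth enough to make sense of the pairing with test data on $\Delta$.

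The \emph{good} term for $u_{\mathrm{far}}$ is obtained via duality with $(D_{L^*})_{p'}$. For any $g \in L^{p'}(\Delta)$ (with appropriate cancellation/compact support properties), I would pair
\[
\int_\Delta \nabla u_{\mathrm{far}} \cdot g\,d\sigma + \int_\Delta (D_t^{1/2} u_{\mathrm{far}})\, h\,d\sigma
\]
against a solution $v$ of $L^* v = 0$ whose Dirichlet data is built from $(g,h)$, using a Green's-function/integration-by-parts representation. The solvability of $(D_{L^*})_{p'}$ then yields $\|N(v)\|_{L^{p'}} \lesssim \|(g,h)\|_{L^{p'}}$, and combined with boundary Cacciopoli/trace bounds for $u_{\mathrm{far}}$ in $C\Delta$ this produces
\[
\left(\fint_{\Delta} |\mathcal{T}(f_{\mathrm{far}})|^{p}\,d\sigma\right)^{1/p} \lesssim \sup_{\text{larger balls}} \left(\fint |\mathcal{T}f|^{q}\,d\sigma\right)^{1/q} + \text{data term}.
\]
Adding the two estimates gives the Shen hypothesis, and the real-variable lemma concludes $\mathcal{T}: L^p \to L^p$, i.e.\ $(R_L)_p$.

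The main obstacle I anticipate is the non-local half-time derivative $D_t^{1/2}$. Unlike the spatial gradient, cutting off $f$ outside $C\Delta$ does not cut off $D_t^{1/2} f$, so both the decomposition $f = f_{\mathrm{loc}} + f_{\mathrm{far}}$ and the identification of $u_{\mathrm{far}}$ as ``smooth on $\Delta$'' require extra care; in particular one needs quantitative tail estimates showing that the non-local contribution of $D_t^{1/2}$ from scales $\gg r$ decays fast enough in $r$ to be absorbed as a harmless term in the reverse-H\"older inequality. A parallel subtlety arises in the duality step, where the test data $h$ for the $D_t^{1/2}$-component must be manufactured consistently with the non-local pairing; here I would exploit the fact that on the infinite cylinder $\Omega = \mathcal{O} \times \mathbb{R}$ the half-derivative is defined globally, which is precisely the reason the authors restrict to such domains. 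Once these non-local pieces are tamed, the rest of the argument is a parabolic transcription of the Kenig--Pipher/Shen scheme.
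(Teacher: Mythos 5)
Your high-level plan is the right one — Shen's good-$\lambda$ scheme with a localized reverse H\"older inequality for solutions vanishing on a surface ball is exactly the paper's strategy — but there are several concrete gaps and one mechanism that diverges from how the paper (and \cite{She06}) actually closes the argument.

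First, you define $\mathcal{T}f = N(\nabla u)+N(D_t^{1/2}u)$ and set out to bound both pieces. The paper observes (Remark following Definition \ref{def:regularity problem}, using the estimate \eqref{eq:TimeDerivative_not_needed} from \cite{D}) that $\|N[D_t^{1/2}u]\|_{L^p}$ is automatically controlled by $\|N[\nabla u]\|_{L^p}$ plus the data norm, so only $N[\nabla u]$ needs to be bounded. Carrying $N(D_t^{1/2}u)$ through the argument not only duplicates work but is actually harder, since $D_t^{1/2}u$ at interior points is a non-local object in time; the paper sidesteps this entirely.

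Second, your proposed mechanism for the \lq\lq good term" — pairing $\int_\Delta \nabla u_{\mathrm{far}}\cdot g\,d\sigma + \int_\Delta D_t^{1/2}u_{\mathrm{far}}\cdot h\,d\sigma$ against an adjoint solution $v$ with data $(g,h)$ — is not how the reverse H\"older inequality is obtained, and as stated it has a circularity problem: to make sense of the boundary trace of $\nabla u_{\mathrm{far}}$ (or of $D_t^{1/2}u_{\mathrm{far}}$, worse) in this pairing one essentially needs the non-tangential estimates one is trying to prove. What both the paper (Theorem \ref{thm:Reverse Holder}) and Shen do instead is purely interior: bound $N^r[\nabla u]$ pointwise by $N^r_{\sup}[u/\delta]$ via Caccioppoli, then control $u$ near the boundary by the Green's function $G(\cdot,V^-(\Delta_d))$ using the local comparison principle (Proposition \ref{prop:Comparison_Principle}), relate $G/\delta$ to the adjoint parabolic measure $\omega^*$ via Proposition \ref{prop:Grean To Meas}, and finally invoke the $B_p(d\sigma)$ property of $\omega^*$ — which is exactly what $(D_{L^*})_{p'}$ supplies — to dominate by the surface integral of $|\nabla u|$. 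The Dirichlet hypothesis enters only through the $B_p$ weight, not through a boundary duality pairing.

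Third, the non-locality of $D_t^{1/2}$ in the localization of the boundary data is correctly flagged by you as a real obstacle, but you stop at identifying it. The paper proves a specific quantitative lemma (Proposition \ref{prop:Product_Rule}) giving
\[
\|(f-\textstyle\fint_\Delta f)\varphi\|_{\dot L^q_{1,1/2}} \lesssim \|h\|_{L^q(32\Delta)}+\|D^{1/2}_tf\|_{L^q(32\Delta)}
\]
with $h$ built from maximal functions of the Haj\l{}asz sharp function $f^\sharp$; this controlled localization is what makes the cutoff of the boundary data legal inside the good-$\lambda$ step, and the extra function $h$ has to be carried through the good-$\lambda$ inequality alongside $g$ and $D_t^{1/2}f$. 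Without such a lemma, the term $\left(\fint_{C\Delta}(|\nabla_T f|^p+|D_t^{1/2}f|^p)\right)^{1/p}$ you write for the bad term is not what you actually get, because $D_t^{1/2}(\varphi f)$ has tails far from $C\Delta$ that must be explicitly estimated.

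Finally, the paper treats $\diam(\mathcal O)<\infty$ separately, because the reverse H\"older inequality and the Whitney decomposition of the good-$\lambda$ set only hold at scales $\lesssim \diam(\mathcal O)$. The bounded case requires decomposing the solution into time-localized pieces $v_i$, $w_i$ supported on slabs $\mathcal O\times(i-1,i+2)$ and proving exponential decay in time (Lemma \ref{lemma:Exponential Decay}) before summing; this is not captured by your outline. In summary: same skeleton, but the good-term mechanism should be the comparison-principle/$B_p$ reverse H\"older rather than a duality pairing, the $N(D_t^{1/2}u)$ estimate should be dropped, the cutoff lemma needs to be stated and proved, and the bounded case needs a separate treatment.
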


\begin{theorem}\label{thm:Rp_extrapolation}
    Assume that \( \Base \) is Lipschitz (either bounded or unbounded) and $L$ be a parabolic operator of the form \eqref{paro} on the domain $\Omega=\mathcal O\times \mathbb R$.
    
Suppose that \( (R_L)_p \) is solvable for some \( 1<p<\infty \).
    Then \( (R_L)_q \) is solvable for \( 1<q<p \).
\end{theorem}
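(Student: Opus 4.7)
\emph{Strategy.} The theorem is a downward extrapolation in the integrability exponent. I would handle it by atomic decomposition of the boundary data combined with interpolation against the given $L^p$ bound. The plan is to (i) introduce a parabolic Hardy space $H^1_{\mathrm{reg}}(\partial\Omega)$ modeled on the regularity-problem norm $\|\nabla_T f\|_{L^1}+\|D_t^{1/2}f\|_{L^1}$, with atoms supported on parabolic surface balls $\Delta_r = \Delta_r(x_0,t_0)$ normalized so that $\|\nabla_T a\|_{L^\infty}+\|D_t^{1/2}a\|_{L^\infty}\lesssim |\Delta_r|^{-1}$ and satisfying an appropriate cancellation condition; (ii) prove the endpoint estimate $\|N(\nabla u_a)\|_{L^1(\partial\Omega)}+\|N(D_t^{1/2}u_a)\|_{L^1(\partial\Omega)}\le C$ uniformly in atoms; and (iii) interpolate with $(R_L)_p$ to conclude $(R_L)_q$ for every $q\in(1,p)$.

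The heart of the argument is the atom estimate. I would split $\partial\Omega$ into a local piece over a parabolic dilate $\Delta_r^{*}=C\Delta_r$ and a remote piece over its complement. On $\Delta_r^{*}$ the hypothesis $(R_L)_p$ gives an $L^p$ bound on $N(\nabla u_a)+N(D_t^{1/2}u_a)$, which passes to $L^1(\Delta_r^{*})$ by H\"older, with a constant uniform in $a$ by the atom normalization. On $(\Delta_r^{*})^c$ I would dyadically decompose by parabolic distance from $\Delta_r$, and on each annulus combine interior Caccioppoli and De Giorgi--Nash--Moser estimates for $L$ with boundary kernel estimates for the parabolic Green function and its time-adjoint. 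The Lipschitz hypothesis on $\mathcal O$, stronger than the CAD hypothesis in \reftheorem{thm:Dp_implies_Rq}, enters precisely at this step, since we need the boundary regularity that yields sharp kernel-decay estimates on dyadic annuli far from the atom.

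Once the endpoint estimate is established, the sublinear map $f \mapsto N(\nabla u_f)+N(D_t^{1/2}u_f)$ is bounded from $H^1_{\mathrm{reg}}(\partial\Omega)$ to $L^1(\partial\Omega)$ and, by assumption, from the corresponding $L^p$-based space to $L^p(\partial\Omega)$, so real interpolation between $H^1$ and $L^p$ yields $L^q$ boundedness for every $1<q<p$, which is exactly $(R_L)_q$. The main obstacle is the non-locality of $D_t^{1/2}$: even when $a$ is supported in $\Delta_r$, the function $D_t^{1/2}a$ has tails decaying only like $|t-t_0|^{-3/2}$, so it cannot be part of a genuinely localized atomic condition. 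Controlling these slowly decaying tails while still obtaining an integrable bound on $N(D_t^{1/2}u_a)$ in the remote region --- presumably by a further dyadic decomposition in the time variable around $\{|t-t_0|\sim 2^k r^2\}$ combined with the cancellation of $a$ --- is where essentially all of the work lies, and is also the reason the domain hypothesis is strengthened from CAD to Lipschitz.
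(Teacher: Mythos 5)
Your proposal is directionally aligned with the paper's actual strategy: both proceed by constructing an atomic parabolic Hardy--Sobolev space at $p=1$, proving a uniform $L^1$ bound on the non-tangential maximal function for atomic data by splitting into a local piece (where $(R_L)_p$ plus H\"older is applied) and a remote piece (dyadic annuli with interior/boundary regularity and Green's function decay), and then concluding by real interpolation. However, several specific ingredients you propose are either unnecessary or incorrect relative to what actually works.

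First, your atom normalization $\|D_t^{1/2}a\|_{L^\infty}\lesssim|\Delta_r|^{-1}$ is too rigid. The paper's $(1,1/2,\infty)$-atoms instead require $\|D_t^{1/2}b\|_{\BMO}\le|B|^{-1}$ (Definition~\ref{HSato}). This is not cosmetic: the interpolation argument hinges on a Calder\'on--Zygmund decomposition of parabolic Sobolev functions (Proposition~\ref{CZ}), and for the ``good'' part $g$ of that decomposition the natural outcome for the half time derivative is a BMO bound, not an $L^\infty$ bound --- establishing $\D g\in\BMO$ there is by far the hardest step in Section~\ref{S:interpolation}, via Theorem~\ref{T:equiv-new}. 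Likewise, no explicit cancellation condition on atoms is imposed or needed: condition~(iii) of Definition~\ref{HSato} is an $L^1$ size bound that follows automatically from support and Poincar\'e, and the non-local tail of $D_t^{1/2}a$ (decaying like $|t-t_0|^{-3/2}$, as you correctly observed --- cf.\ Lemma~\ref{lhalf}) is controlled by this size bound alone, not by a moment condition. Second, you propose to also control $N(D_t^{1/2}u_a)$ in $L^1$, but the paper dispenses with this entirely: by the remark after Definition~\ref{def:regularity problem} (invoking \cite{D}), bounds on $N[D_t^{1/2}u]$ follow from those on $N[\nabla u]$, so the endpoint estimate \eqref{eq:Endpoint} is only for $N[\nabla u]$. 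Finally, and most substantively, you locate the role of the Lipschitz hypothesis in ``boundary kernel-decay estimates on dyadic annuli,'' but that is not where it enters: the remote-annulus argument (comparison principle, Green's function growth, reverse H\"older) runs equally well on 1-sided chord arc bases and is in fact the content of Theorem~\ref{thm:Reverse Holder} proved in that generality. The Lipschitz restriction is required by the interpolation machinery (Proposition~\ref{IHS2} and its corollary), because the identification of $\dot L^p_{1,1/2}$ with Triebel--Lizorkin / Haj\l asz spaces used in building the endpoint space currently needs a boundary that locally looks like $\R^n$; the paper explicitly flags this as the bottleneck preventing an extension beyond Lipschitz cylinders. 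In short, the overall scheme is right, but the genuinely hard technical content --- building the endpoint space with BMO-based atoms, the parabolic CZ lemma, and the new BMO characterization --- is not accounted for, and the role of the Lipschitz hypothesis is misattributed.
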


\reftheorem{thm:Dp_implies_Rq} is the parabolic version of the result of \cite{She06}, while \reftheorem{thm:Rp_extrapolation} is an analogue of the elliptic result in \cite{KP95}.

By combining these two theorems we can obtain the following: 
\begin{theorem}\label{thm:main}
    Assume that \( \Base \) is Lipschitz (either bounded or unbounded) and $L$ be a parabolic operator of the form \eqref{paro} on the domain $\Omega=\mathcal O\times \mathbb R$.
    
Suppose that \( (D_{L^*})_{p'} \) is solvable for some $p\in (1,\infty)$. 
   Then either \( (R_L)_p \) is solvable or \( (R_L)_q \) is not solvable for any \( 1<q<\infty \).
\end{theorem}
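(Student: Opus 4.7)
The plan is to prove \reftheorem{thm:main} by a short logical combination of \reftheorem{thm:Dp_implies_Rq} and \reftheorem{thm:Rp_extrapolation}, both of which I will use as black boxes. It is cleanest to argue the contrapositive of the stated dichotomy: assuming $(D_{L^*})_{p'}$ is solvable, I want to show that as soon as $(R_L)_q$ is solvable for \emph{some} $q\in(1,\infty)$, it must also be solvable at the specific index $p$ conjugate to $p'$.

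So fix such a $q$ and split into cases according to how $q$ compares with $p$. If $q<p$, I am immediately in the hypothesis of \reftheorem{thm:Dp_implies_Rq} and obtain $(R_L)_p$. If $q=p$ there is nothing to prove. The only nontrivial case is $q>p$: here I first invoke \reftheorem{thm:Rp_extrapolation}, which on a Lipschitz base propagates the solvability of $(R_L)_q$ downwards to any exponent $r\in(1,q)$. I then choose some $r$ with $1<r<p<q$, so that $(R_L)_r$ is solvable, and feed this together with the standing assumption $(D_{L^*})_{p'}$ back into \reftheorem{thm:Dp_implies_Rq} to conclude $(R_L)_p$. Note that \reftheorem{thm:Dp_implies_Rq} requires only the weaker 1-sided chord arc assumption on $\Base$, which is automatic from the Lipschitz hypothesis in \reftheorem{thm:main}; and \reftheorem{thm:Rp_extrapolation} genuinely needs the Lipschitz assumption, which is why this assumption appears in \reftheorem{thm:main}.

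There is no real obstacle to be overcome at this stage: everything of substance has been absorbed into the two previous theorems. The only delicate point in the bookkeeping is making sure the three possibilities $q<p$, $q=p$, $q>p$ are handled uniformly, and in particular that in the last case one can always find an intermediate exponent $r$ strictly between $1$ and $p$ (which is trivial since $p>1$). Once that is observed, the dichotomy in the conclusion of \reftheorem{thm:main} follows: either $(R_L)_q$ fails for every $q\in(1,\infty)$, or there exists one at which it holds, and then by the argument above it must hold at $p$ as well.
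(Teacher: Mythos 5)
Your proof is correct and is precisely the intended combination of \reftheorem{thm:Dp_implies_Rq} and \reftheorem{thm:Rp_extrapolation}; the paper itself states \reftheorem{thm:main} as an immediate consequence of those two and leaves the case split implicit. The only thing worth noting is that your case analysis on $q<p$, $q=p$, $q>p$ is exactly the right bookkeeping, and the choice of an intermediate $r\in(1,p)$ in the last case is indeed what makes the reduction go through.
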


In summary, thanks to \reftheorem{thm:main} and \cite{DD} we have the following two implications
\[ (R_L)_p \implies (D_{L^*})_{p'} \WORD{and} (D_{L^*})_{p'} + (R_L)_q \implies (R_L)_p, \]
for parabolic operators on Lipschitz cylinders. \medskip

We have left open some further research directions, \reftheorem{thm:Rp_extrapolation} has stronger assumption on the domain $\mathcal O$ than the other result and thus it might be possible to improve this result further beyond the Lipschitz cylinders. What limits us currently are interpolation results such as \refproposition{IHS2} which only works on boundaries that locally look like $\R^n$. 

Another natural direction would be to consider time varying domains (restricting initially to domains that locally look like a graph of \( \Lip(1,1/2) \) functions).
\medskip

The paper is set up as follows. Section \ref{S:basics} 
    contains definitions needed for the rest of the paper and Section \ref{S3} summarises known parabolic results we needs for our proofs.
 \reftheorem{thm:Dp_implies_Rq} is proven first, motivated by the method from \cite{She06}. 
    The two cases \( \diam \Base = \infty \)  and
 \( R \coloneqq \diam\Base < \infty \) are considered separately.
 
\reftheorem{thm:Rp_extrapolation} follows by real interpolation against a suitable endpoint space which we have already introduced. With the interpolation result in hand, the final piece missing is the establishment of an endpoint bound for \( \|N[\nabla u]\|_{L^1(\partial\Omega)} \) which closely follows its elliptic version as in \cite{DK}.

\section{Basic definitions}\label{S:basics}

\subsection{Domain}
In general our domain will be a cylinder \( \Omega = \Base \times \R \),
    where the base \( \Base \) is a  1-sided non-tangentially accessible domain (also called uniform domain) with Ahlfors regular boundary of dimension $n-1$. As the precise definition does not play any significant role here we omit the details and refer the reader to \cite{D} which has the same settings as here and contains all definitions in great detail (in particular, the existence of interior Harnack chains and corkscrew balls for any boundary point plus the Ahlfors condition). 
  
We denote points in \( \Omega \) by \( (X,t) \) or also \( (\xi,\tau) \), where \( X,\,\xi \in \Base \) and \( t,\,\tau \in \R \),
    and points on the boundary by \( (P,t) \in \partial \Omega = \partial\Base \times \R \).
The space \( \Omega \) is equipped with the norm \( \|(\xi,\tau)\| = \rho \),
    where $\rho$ is the positive solution  to the following equation
\begin{equation}
	\label{E:par-norm}
	\frac{|\xi|^2}{\rho^2} + \frac{\tau^2}{\rho^4} = 1.
\end{equation}
One can easily show that $\|(\xi,\tau)\| \sim |\xi| + |\tau|^{1/2}$ and 
that this norm scales correctly according to the parabolic nature of the 
PDE.

Next we introduce notation for balls, cubes and cylinders
\begin{itemize}
    \item \(B_r(X,t):=\{(Y,s) \in \Omega; \|(Y,s)-(X,t)\|\leq r\}\)$\,\,\qquad$ (parabolic ball),
    \item \(B_r(X):=\{Y\in \mathcal{O}; |Y-X|\leq r\}\)$\qquad\qquad\qquad\qquad$ (ball in space only),
    \item \( C_r(X,t) := B_r(X) \times (t-r^2,t+r^2) \)$\qquad\qquad\qquad\,\,\,\,$ (parabolic cylinder),
    \item \(Q_r(X,t):=\{(Y,s)\in \Omega; |Y_i-X_i|\leq r, |s-t|\leq r^2\}\)$\,\,\,$(parabolic cube),
    \item \(\Delta_r(P,t):=\partial\Omega\cap B_r(P,t) \)$\,\,\quad\qquad\qquad\qquad\qquad\qquad$ (boundary ball),
    \item \(T(\Delta_r(P,t)):=\Omega\cap B_r(P,t) \)$\,\,\quad\qquad\qquad\qquad\qquad\quad$ (Carleson region).
\end{itemize}

Occasionally, we drop dependence of these sets on the point $(X,t)$ and $r$ and just write $B,\,C,\,Q$ respectively. When doing this, the notation $r(B)$ or $r(Q)$ will denote the \lq\lq radius" $r$ of these sets. We use the similar convention also for $\Delta$ and $T(\Delta)$. 
When writing $2B$ or $5\Delta$, it means enlarged sets (with same center point but $2\times$ or $5\times$ the radius).
Next, given \( (X,t) \in \Omega \) we let \( \delta(X,t) = \delta(X) = \dist(X,\Base) \)
    denotes the parabolic distance to the boundary.     

Note that because \( \Base \) is a CAD we have that for \( P \in \Base \) and \( \diam \Base > r>0 \)
    the existence of a corkscrew point \( A_r(P) \).
Thus for \( \Delta = \Delta(P,t;r) \subset \Omega \) with \( r < \diam \Base \) 
    we define \( V^+(\Delta) = (A_r(P),t+100r^2) \) and \( V^-(\Delta) = (A_r(P),t-100r^2) \)
    as the \lq\lq forward in time" and \lq\lq backward in time" corkscrew point respectively.

Given an aperture \( a > 0 \), it is customary to define non-tangential approach regions (\lq\lq cones") as follows:
\[\tilde{\Gamma}_a(P,t) \coloneqq 
        \{ (X,s) \in \Omega : \|(X,s) - (P,t)\| < (1+a)\delta(X) \}.\]
However such cones lack one desired property, namely that the set (which can be thought of as the reverse cone originating from $(X,t)$)
$$\{(P,\tau)\in\partial\Omega:\, (X,t)\in\tilde\Gamma_a(P,\tau)\}$$
always has the surface area comparable to the $\delta(X,t)^{n+1}$. To fix this issue we do as follows. Let
\begin{itemize}
    \item \( \gamma_a(P) \coloneqq \{ X \in \Base : |X-P| < (1+a)\delta(X) \} \),
    \item \( S_a(X) \coloneqq \{ P \in \partial\Base : X \in \gamma_a(P) \} \),
    \item \( \tilde{S}_a(X) \coloneqq \bigcup_{P \in S_a(X)} ,
        \{ P' \in \partial\Base : |P'-P| < a\delta(X) \} \),
    \item \( \tilde{\gamma}_a(P) \coloneqq \{ X \in \Omega : P \in \tilde{S}_a(X) \} \),
    \item \( \Gamma_a(P,t) \coloneqq \{ (X,s) \in \Omega : X \in \tilde{\gamma}_a(P), 
        \; |s-t| < a\delta(X)^2 \} \).
\end{itemize}
Note that $\Gamma_a(P,t)$ defined in the last line is our version of the non-tangential approach region and
    is equivalent to $\tilde{\Gamma}_a(P,t)$ since for any $a>0$ there exists $a_1,a_2>0$ such that
$$   \Gamma_{a_1}(P,t)   \subset \tilde{\Gamma}_a(P,t) \subset {\Gamma}_{a_2}(P,t)$$
and vice versa. However, as the set $\tilde{S}_a(X)$ can be written as a union of boundary balls of radius $a\delta(X)$ it follows that the size of this set is always comparable to $\delta(X,t)^{n+1}$ as desired.
For the rest of the paper we will let \( a=1 \) and drop the subscript next to $\Gamma$.
    
\begin{definition}
    Given a function \( u : \Omega \to \R \), the non-tangential maximal function is given by
    \[ N_{\sup}[u](P,t) \coloneqq \sup_{\Gamma(P,t)} |u|. \]
    Since \( \nabla u \) for example, is not defined pointwise it is customary to define a version of non-tangential maximal function using \( L^2 \) averages as follows:
    \[ N[u](P,t) \coloneqq \sup_{\Gamma(P,t)} \Average^2[u], 
        \quad \Average^2[u](X,s) \coloneqq \Big( \fint_{C_{\delta(X)/4}(X,s)} |u|^2 \Big)^{1/2}. \]
Occasionally, we will also consider truncated non-tangential cones $\Gamma^r(P,t):=\Gamma(P,t)\cap B_r(P,t)$
and corresponding truncated non-tangential maximal function $N^r$.        
\end{definition}
We are now ready to define the Dirichlet problem
\begin{definition}\label{def:Dirichlet problem}
	Let \( 1 < p < \infty \)
	We say that the \( L^p \) \emph{Dirichlet problem} for \( L \) \( (D_L)_p \) is solvable 
		if for each continuous \( f \in L^p(\partial\Omega) \) the solution to
	\[ \begin{cases}
		Lu = 0, \quad \text{in } \Omega
        \\
		u = f, \quad \text{on } \partial\Omega
	\end{cases} \]
		satisfies
	\[ \| N[u] \|_{L^p(\partial\Omega)} \lesssim \| f \|_{L^p(\partial\Omega)}. \]
\end{definition}

In the definition above it does not matter which version of non-tangential maximal function is used. When it is not explicitly stated, we will always mean $N$ defined using the $L^2$ averages.

\subsection{Half time derivative and Parabolic Sobolev Space on {$\mathbb R^n$}.}
Here we consider an appropriate function space for our boundary data on 
$\mathbb R^n=\mathbb R^{n-1}\times\mathbb R$.

As a rule of thumb, one derivative in time behaves as two 
derivatives in spatial variables. Thus if we impose boundary data having one spatial derivative,  the correct order of the time derivative is $1/2$, as we would like to respect the correct parabolic scaling.

This problem has been studied previously in~\cite{HL96,HL99,Nys06}, who 
have followed~\cite{FJ68} in defining the homogeneous parabolic Sobolev 
space $\dot{L}^p_{1,1/2}$ in the following way. 

\begin{definition}%
	\label{D:paraSob}
	The \textit{homogeneous parabolic Sobolev space} 
$\dot{L}^p_{1,1/2}(\R^n)$, for $1 < p < \infty$, is defined to consist of 
an equivalence class of functions $f$ with distributional derivatives 
satisfying $\|f\|_{\dot{L}^p_{1,1/2}(\R^n)} < \infty$, where
	\begin{equation}
		\|f\|_{\dot{L}^p_{1,1/2}(\R^n)} = \|\D f\|_{L^p(\R^n)}
	\end{equation}
	and
	\begin{equation}
		(\D f)\,\widehat{\,}\,(\xi,\tau) := \|(\xi,\tau)\| 
\widehat{f}(\xi,\tau),\label{e12}
	\end{equation}
with $\|(\xi,\tau)\|$ on $\R^{n-1} \times \R$ defined by \eqref{E:par-norm}.
\end{definition}

In addition, following~\cite{FR67}, we define a parabolic half-order time 
derivative by
\begin{equation}
	(\D_n f)\,\widehat{\,}\,(\xi,\tau) := \frac{\tau}{\|(\xi,\tau) \|} 
\widehat{f}(\xi,\tau).
\end{equation}

If $0 < \alpha \leq 2$, then for $g \in C^\infty_c(\R)$ the 
\textit{one-dimensional fractional differentiation operators} $D_\alpha$ 
are defined by
\begin{equation}
	(D^\alpha g)\,\widehat{\,}\,(\tau) := |\tau|^\alpha 
\widehat{g}(\tau).
\end{equation}
It is also well known that if $0 < \alpha < 1$ then
\begin{equation}
	D^\alpha g(s) = c\int_{\R} \frac{g(s) - g(\tau)}{|s-\tau|^{1 + 
\alpha}} \dtau
\end{equation}
whenever $s \in \R$.
If $h(x,t) \in C^\infty_c(\R^n)$ then by $D^\alpha_t h: \R^n \rightarrow 
\R$ we mean the function $D^\alpha h(x, \cdot)$ defined a.e.\ for each 
fixed $x \in \R^{n-1}$. When $\alpha=1/2$ we have
\[ D^{1/2}_tD^{1/2}_t = -H_t \partial_t, \]
where \( H_t \) is the Hilbert transform taken in the time variable.

We record the connection between $\D$, $\D_n$ and $D_t^{1/2}$. By 
~\cite{DD,FR66,FR67} we have that
\begin{align}\label{norms}
	\|\D f\|_{L^p(\R^n)} \sim  \|\D_n f\|_{L^p(\R^n)}+ \|\nabla 
f\|_{L^p(\R^n)}\sim  \|D_t^{1/2} f\|_{L^p(\R^n)}+\|\nabla f\|_{L^p(\R^n)},
\end{align}
for all $1<p<\infty$, as well as at the $\BMO$ end-point:
\begin{align}\label{normsbmo}
	\|\D f\|_{\BMO(\R^n)} \sim  \|\D_n f\|_{\BMO(\R^n)}+ \|\nabla 
f\|_{\BMO(\R^n)}\sim  \|D_t^{1/2} f\|_{\BMO(\R^n)}+\|\nabla f\|_{\BMO(\R^n)}.
\end{align}
Here $\nabla$ denotes the usual gradient in the 
variables $x\in\mathbb R^{n-1}$ and $\BMO$ denotes the usual space of bounded mean oscillations (with respect to parabolic balls). \medskip

When $p=1$ we no longer have \eqref{norms} and hence the norm of $\dot{L}^1_{1,1/2}(\R^n)$ might not be equivalent to 
$\|\nabla f\|_{L^1}+\|D^{1/2}_t f\|_{L^1}$. We therefore define the space
\begin{equation}\label{W}
\dot{W}^{p}_{1,1/2}(\R^n)=\{f\in L^1_{loc}(\R^n): \|\nabla f\|_{L^p(\R^n)}+\|D^{1/2}_t f\|_{L^p(\R^n)}<\infty\},
\end{equation}
with norm $\|\nabla f\|_{L^p}+\|D^{1/2}_t f\|_{L^p}$. Modulo constants when $p\in (1,\infty)$ we have
$\dot{W}^{p}_{1,1/2}(\R^n)=\dot{L}^{p}_{1,1/2}(\R^n)$ and the norms are comparable, but when $p=1,\infty$ the spaces might not coincide. These spaces will be useful for interpolation. We will also need a weaker end-point space than $\dot{W}^{\infty}_{1,1/2}(\R^n)$. We denote by $\dot{W}^{\infty,{\BMO}}_{1,1/2}(\R^n)$ the space such that 
    $\nabla f\in L^\infty(\R^n)$ and $D^{1/2}_t f\in \BMO(\R^n)$.

\subsection{Connections to Triebel-Lizorkin and Haj\l{}asz-Sobolev spaces.}%
\label{S:paraSobolev}
Here and in what follows we will use notation $X=(x,t)$ to denote points in $\R^n$ with components  $x\in\R^{n-1}$ and $t\in\R$. 
\vglue2mm
Let us recall the definition of Triebel-Lizorkin spaces 
$F^{s}_{p,q}$. The fact that our operator $\D$ is defined by \eqref{e12} implies that it fits naturally to the settings of these spaces for $q=2$, but we need to account for the parabolic scaling. 

\begin{definition} Let $s\in (0,\infty)$ and $p\in (0,\infty]$. Let $\varphi\in\mathcal S(\R^n)$ such that
$$\mbox{\rm supp }\hat\varphi\subset \{(\xi,\tau): 1/2\le \|(\xi,\tau)\|\le 3/2\},$$
and 
$$|\hat\varphi(\xi,\tau)|\ge c_0>0\mbox{ for } 3/5\le \|(\xi,\tau)\|\le 5/3.$$
Let $\varphi_\rho(x,t)$  denote the usual parabolic dilation by $\rho$, that is
	\[
		\varphi_\rho(x,t) = \rho^{-(n+1)}\varphi(x/\rho, t/\rho^2).
	\]
The homogeneous {\rm parabolic Triebel-Lizorkin space} $\dot{F}^s_{p,2}(\R^n)$ is defined as the collection of all $f\in\mathcal S'(\R^n)$ such that $\|f\|_{\dot{F}^s_{p,2}(\R^n)}<\infty$ where
\begin{equation}
\|f\|_{\dot{F}^s_{p,2}(\R^n)}=\left\|\left(\sum_{k\in\mathbb Z}4^{ks}|\varphi_{2^{-k}}* f|^2\right)^{1/2}\right\|_{L^p(\R^n)},
\end{equation}
with the usual modification when $p=\infty$.
\end{definition}
By the Littlewood-Paley theorem when $M_k f=\varphi_{2^{-k}}* f$  then for $1<p<\infty$
$$\|f\|_{L^p(\R^n)}\approx \left\| (M_k f)_{\ell^2(\mathbb Z)}\right\|_{L^p(\R^n)}=\|f\|_{\dot{F}^0_{p,2}(\R^n)}.$$
Hence
$$\|f\|_{\dot{L}^p_{1,1/2}(\R^n)}\approx \left\| (2^kM_k f)_{\ell^2(\mathbb Z)}\right\|_{L^p(\R^n)}=\|f\|_{\dot{F}^1_{p,2}(\R^n)}.$$
Note that is is only true when parabolic scaling is applied, in the usual Euclidean metric we have
$\dot{L}^p_{1/2}(\R)=\dot{F}^{1/2}_{p,2}(\R)$ and $\dot{L}^p_{1}(\R^{n-1})=\dot{F}^{1}_{p,2}(\R^{n-1})$
(see \cite{MPS}).

The paper \cite{YYZ} gave a different characterisation of these spaces using averages. Again we have to account for parabolic scaling, but the argument adapts easily. Let us first define

\begin{equation}\label{sqfn-disc}
S_{s,disc}(f)(x,t)=\left(\sum_{k\in\mathbb Z} 4^{ks}\left(f(x,t)-\fint_{B_{2^k}(x,t)}f\right)^2\right)^{1/2}.
\end{equation}
Here $B_\alpha(x,t)$ is the parabolic ball centred at $(x,t)$ with radius $\alpha$. By Theorem 1.1 of \cite{YYZ}
$\|S_{s,disc}(f)\|_{L^p}\sim \|f\|_{\dot{F}^s_{p,2}}$ for all $1<p<\infty$. For our further considerations we would prefer continuous version of the square function \eqref{sqfn-disc}. It is not hard to see that by averaging over set of balls of radius $2^{\tau k}$, $1\le\tau\le 2$ we may replace it by
\begin{equation}\label{sqfn}
S_{s}(f)(x,t)=\left(\int_0^\infty \alpha^{-2s-1}\left(f(x,t)-\fint_{B_\alpha(x,t)}f\right)^2d\alpha\right)^{1/2}.
\end{equation}
In particular, for $s=1$ we have $\|f\|_{\dot{L}^p_{1,1/2}(\R^n)}\approx \|S_1(f)\|_{L^p(\R^n)}$ for $1<p<\infty$.

We develop an interesting connection between this square function and new characterisation of the fact that $\D f \in \BMO(\R^n)$. Recall that \cite{DDH} (Theorem 2.3) showed the following:
\begin{theorem}%
	\label{T:equiv}
	Let $f : \R^{n-1} \times \R \to \R$ and $f \in {\rm Lip}(1,1/2)$ then the following conditions are equivalent:
	\begin{itemize}
		\item $\D f \in\BMO(\R^n)$

		\item\label{I:av}
		\begin{equation}
			\label{E:Dt:riv}
			\sup_{B_r} \frac{1}{|B_r|}\int\limits_{B_r}\int\limits_{\|(y,s)\| \leq r}
			\frac{|f(x+y,t+s) - 2f(x,t) + f(x-y,t-s)|^2}{\|(y,s)\|^{n+3}} \,dy\,ds\,dx\,dt =  B_{1} < \infty,
		\end{equation}
\end{itemize}
	\begin{equation}
		\label{E:equiv:norms}
		\|\D f \|_\BMO^2
		\sim B_{1}.
	\end{equation}
\end{theorem}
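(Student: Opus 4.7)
The plan is to reduce the equivalence to a global $L^2$/Fourier identity and then localise it to parabolic balls via the standard $\BMO$--Carleson measure characterisation. The key observation is that the Fourier transform (in the $(x,t)$ variables) of the symmetric second difference $f(x+y,t+s) - 2f(x,t) + f(x-y,t-s)$ is
\[ \bigl(2\cos(\xi\cdot y + \tau s) - 2\bigr)\widehat{f}(\xi,\tau) = -4\sin^2\!\left(\tfrac{\xi\cdot y + \tau s}{2}\right)\widehat{f}(\xi,\tau). \]
After the parabolic rescaling $(y,s)\mapsto(\|(\xi,\tau)\|^{-1}y',\|(\xi,\tau)\|^{-2}s')$ and a continuity/compactness argument on the parabolic unit sphere $\{|\xi|^2+\tau^2=1\}$ (cf.\ \eqref{E:par-norm}), one checks
\[\int_{\R^n}\frac{16\sin^4\!\bigl(\tfrac{\xi\cdot y+\tau s}{2}\bigr)}{\|(y,s)\|^{n+3}}\,dy\,ds \sim \|(\xi,\tau)\|^2.\]
Combined with Plancherel this yields a global $L^2$-identity: the double integral in \eqref{E:Dt:riv} extended over all of $\R^n \times \R^n$ is comparable to $\|\D f\|_{L^2(\R^n)}^2$.

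For the forward direction $\D f \in \BMO \Rightarrow B_{1} < \infty$, I fix a parabolic ball $B_r$, choose an additive constant $c_{B_r}$ (which the symmetric second difference annihilates), and decompose $f - c_{B_r} = g_{\mathrm{in}} + g_{\mathrm{out}}$, where $g_{\mathrm{in}}$ is a parabolic-metric smooth cutoff of $f - c_{B_r}$ onto $4B_r$ and $g_{\mathrm{out}}$ is the remainder. Since $(x,t) \in B_r$ and $\|(y,s)\| \leq r$ force the evaluation points $(x\pm y, t\pm s)$ to lie inside $3B_r$, the function $g_{\mathrm{out}}$ contributes only through the cutoff transition annulus. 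The main term from $g_{\mathrm{in}}$ is bounded by the global $L^2$ identity above, and $\|\D g_{\mathrm{in}}\|_{L^2(\R^n)}^2 \lesssim |B_r|\,\|\D f\|_\BMO^2$ follows from the $L^2$-oscillation definition of $\BMO$ applied to $\D f$ on $4B_r$. The transition-annulus remainder is absorbed using the pointwise Lip$(1,1/2)$ bound $|f(x+y,t+s)-2f(x,t)+f(x-y,t-s)| \lesssim \|(y,s)\|$ against the integrable weight $\|(y,s)\|^{-n-3} \mathbf{1}_{\{\|(y,s)\|\leq r\}}$.

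For the converse $B_{1} < \infty \Rightarrow \D f \in \BMO$, I invoke a parabolic Strichartz-type characterisation of $\BMO$ via the square function $S_1(f)$ from \eqref{sqfn}: $\D f \in \BMO$ is equivalent, up to constants, to the Carleson-norm condition
\[\sup_{B_r} \frac{1}{|B_r|}\int_{B_r}\int_0^{r}\alpha^{-3}\Bigl(f(x,t)-\fint_{B_\alpha(x,t)}f\Bigr)^2 d\alpha\,dx\,dt < \infty.\]
By Fubini (rewriting the average in terms of integration in $(y,s)$) together with the Fourier identity applied at each scale $\alpha$, this Carleson norm is comparable to $B_{1}$, yielding $\|\D f\|_\BMO^2 \lesssim B_{1}$.

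The main obstacle will be the accurate handling of the transition annulus in the forward direction: because the symmetric second difference is genuinely non-local, the cutoff must be adapted to the \emph{parabolic} metric rather than the Euclidean one, and the resulting error must be absorbed against the singular weight $\|(y,s)\|^{-n-3}$ using the Lip$(1,1/2)$ hypothesis---this is precisely why the theorem is stated under that regularity assumption.
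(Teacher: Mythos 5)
Note first that the paper does not itself prove \reftheorem{T:equiv}: it recalls it from~\cite{DDH} (Theorem~2.3), where it is established via the Strichartz framework of~\cite{S80}, i.e.\ essentially the same machinery the paper then adapts to prove \reftheorem{T:equiv-new}. Your Plancherel--plus--localisation route is genuinely different, and parts of it are sound --- the Fourier computation $\int 16\sin^4\bigl(\tfrac{\xi\cdot y+\tau s}{2}\bigr)\|(y,s)\|^{-n-3}\,dy\,ds\sim\|(\xi,\tau)\|^2$ is correct after the parabolic rescaling (both the convergence near $0$ and at $\infty$ check out in the measure $\rho^n\,d\rho$), and so is the global $L^2$ identity. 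But the localisation to $\BMO$ contains genuine gaps.

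In the forward direction the heart of the matter is the bound $\|\D g_{\mathrm{in}}\|_{L^2}^2 \lesssim |B_r|\,\|\D f\|_\BMO^2$. This does \emph{not} follow from ``the $L^2$-oscillation definition of $\BMO$ applied to $\D f$ on $4B_r$'': since $\D$ is a non-local Fourier multiplier of parabolic order one, $\D\bigl((f-c_{B_r})\varphi\bigr)$ is not $\varphi\,\D f$, and the commutator $[\D,\varphi]$ must be controlled explicitly --- compare how Proposition~\ref{prop:Product_Rule} has to be proven by hand for $D^{1/2}_t$. You have not supplied this product rule, and this is exactly where the difficulty of the theorem lives. Your ``transition-annulus remainder'' discussion is also incoherent as stated: $g_{\mathrm{out}}$ vanishes identically on $3B_r$, hence all evaluation points of the second difference lie where $g_{\mathrm{out}}\equiv 0$, and it contributes nothing to the local $B_1$ integral. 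Moreover the absorption you propose fails arithmetically: for $f\in\mathrm{Lip}(1,1/2)$ one has $|\Delta^2_{(y,s)}f|\lesssim\|(y,s)\|$, so the integrand is $\lesssim\|(y,s)\|^{-(n+1)}$, but $\int_{\|(y,s)\|\le r}\|(y,s)\|^{-(n+1)}\,dy\,ds\approx\int_0^r\rho^{-1}\,d\rho$ diverges in the parabolic measure. The $\mathrm{Lip}(1,1/2)$ hypothesis alone does not make that weight integrable near the origin.

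In the converse direction you assert that $B_1$ is \emph{comparable} to the Carleson norm $B_2$ of \eqref{E:av:half-new}. Only $B_2\lesssim B_1$ holds: writing $f-\fint_{B_\alpha}f$ as a symmetrised shell average of the second difference and applying Cauchy--Schwarz and Fubini in $\alpha$ gives $B_2\lesssim B_1$, but the reverse inequality is false in general --- that is precisely the point of moving the absolute value outside the interior integral, as the paper notes in the discussion after \reftheorem{T:equiv-new} and attributes to~\cite{YYZ}. Your conclusion $\|\D f\|_\BMO^2\lesssim B_1$ can be salvaged since one only needs $B_2\lesssim B_1$ and then \reftheorem{T:equiv-new}, but the ``Fourier identity at each scale $\alpha$'' you cite for two-sided comparability is not a valid step, and \reftheorem{T:equiv-new} is itself a non-trivial theorem whose proof in the paper rests on the duality machinery of~\cite{S80,FS72,CT75}, not on a naive Plancherel calculation.
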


Now in the light of \eqref{sqfn} we need to obtain a new version of the condition \eqref{E:Dt:riv} where 
the absolute value is moved to the outside of the interior integral.  A method how to establish such conditions was developed by Strichartz \cite{S80} and our paper \cite{DDH} adapts it to parabolic settings. We claim the following:
\begin{theorem}%
	\label{T:equiv-new}
	Let $f : \R^{n-1} \times \R \to \R$ and $f \in {\rm Lip}(1,1/2)$ then the following conditions are equivalent:
	\begin{itemize}
		\item $\D f \in\BMO(\R^n)$
\item

			\begin{equation}\label{E:av:half-new}
				\sup_{B_r} \frac{1}{|B_r|}\int_{B_r}\int_{0}^{r}
				\alpha^{-3}\left|f(x,t)-\fint_{B_\alpha(x,t)}f\,\right|^2\,d\alpha\dt\dx = B_2 < \infty.
			\end{equation}
\end{itemize}
Moreover, $\|\D f\|_\BMO^2
		\sim B_{2}$ and $r$ is the diameter of the parabolic ball $B_r$. The statement also holds with parabolic cubes $Q_r$ replacing the parabolic balls $B_r$.
\end{theorem}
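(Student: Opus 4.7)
Since Theorem~\ref{T:equiv} already gives $\|\D f\|_\BMO^2 \sim B_1$, the plan is to reduce the new theorem to proving $B_1 \sim B_2$; the final remark that parabolic cubes $Q_r$ can replace $B_r$ is then routine, since the two families of sets are boundedly comparable. I treat the two directions separately.

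The easy direction, $B_2 \lesssim B_1$, comes from symmetrization. Parabolic balls $B_\alpha(x,t)$ are invariant under the reflection $(y,s)\mapsto(2x-y,2t-s)$, so
\[
f(x,t) - \fint_{B_\alpha(x,t)} f \;=\; \tfrac{1}{2}\fint_{B_\alpha(x,t)}\big[2f(x,t) - f(x+h,t+k) - f(x-h,t-k)\big]\,dh\,dk.
\]
Squaring via Cauchy--Schwarz, using $|B_\alpha|\sim\alpha^{n+1}$, multiplying by $\alpha^{-3}$ and integrating over $\alpha \in (0, r)$ --- then swapping the order of integration with $(h,k)$, so that the $\alpha$-integral runs from $\|(h,k)\|$ to $r$ and contributes $\sim\|(h,k)\|^{-(n+3)}$ --- gives the pointwise bound
\[
\int_0^r \alpha^{-3}\Big|f(x,t) - \fint_{B_\alpha(x,t)} f\Big|^2 d\alpha \;\lesssim\; \int_{\|(h,k)\|\le r}\frac{|2f(x,t)-f(x+h,t+k)-f(x-h,t-k)|^2}{\|(h,k)\|^{n+3}}\,dh\,dk.
\]
Averaging over $(x,t)\in B_r$ then yields $B_2 \lesssim B_1$.

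For the reverse direction $B_1 \lesssim B_2$, following Strichartz \cite{S80} and its parabolic adaptation \cite{DDH}, the key observation is that $\int_0^\infty \alpha^{-3}|f - \fint_{B_\alpha} f|^2\, d\alpha$ is, up to constants, a Littlewood--Paley square function of $g := \D f$. The Fourier multiplier associated with $\mathrm{Id} - \fint_{B_\alpha}$ is $\phi(\alpha\xi,\alpha^2\tau)$ with $\phi(\xi,\tau) = 1 - |B_1|^{-1}\widehat{\mathbf 1_{B_1}}(\xi,\tau)$, and the symmetry of $B_1$ forces $\phi$ to vanish to second order at the origin in the parabolic scaling. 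Writing $\phi(\xi,\tau) = \|(\xi,\tau)\|\,\psi(\xi,\tau)$ produces a bounded multiplier $\psi$ vanishing at both $0$ and $\infty$. Using $\|(\alpha\xi,\alpha^2\tau)\| = \alpha\|(\xi,\tau)\|$, one obtains
\[
f - \fint_{B_\alpha} f \;=\; \alpha\cdot T_{\psi_\alpha}(\D f), \qquad \psi_\alpha(\xi,\tau):=\psi(\alpha\xi,\alpha^2\tau),
\]
so that $\int_0^\infty \alpha^{-3}|f-\fint_{B_\alpha}f|^2\,d\alpha = \int_0^\infty |T_{\psi_\alpha}(\D f)|^2\,d\alpha/\alpha$, a standard parabolic LP square function of $\D f$. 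The Fefferman--Stein characterisation of $\BMO$ via Carleson measures built from such square functions then yields $\|\D f\|_\BMO^2 \lesssim B_2$. Combined with $B_1 \lesssim \|\D f\|_\BMO^2$ from Theorem~\ref{T:equiv}, this closes the loop.

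The main obstacle is this reverse direction. The parabolic norm $\|(\xi,\tau)\|$ is only Lipschitz at the origin, so verifying that $\psi$ and its parabolic dilates fall within the hypotheses of the Fefferman--Stein Carleson/$\BMO$ correspondence requires careful bookkeeping. If the Fourier route turns out too delicate, an alternative is to reconstruct the second difference $\Delta^2_{(h,k)}f = 2f(x,t) - f(x+h,t+k) - f(x-h,t-k)$ directly from $f-\fint_{B_\alpha}f$ at comparable scales $\alpha \sim \|(h,k)\|$ around perturbed base points, bounding the resulting differences of averages over close balls via Cauchy--Schwarz against the Carleson data $B_2$.
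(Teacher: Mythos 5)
Your easy direction ($B_2 \lesssim B_1$) genuinely improves on the paper. The paper obtains the implication $\D f\in\BMO \Rightarrow B_2<\infty$ by citing Strichartz's Theorem~2.1 after setting up the multiplier $\widehat\psi=\widehat\varphi/\|(\xi,\tau)\|$ with $\varphi=\delta-\chi_{\tilde Q_1}$; you instead exploit the central symmetry of the parabolic ball to write $f(x,t)-\fint_{B_\alpha(x,t)}f$ as half the average of the symmetric second difference over $(h,k)\in B_\alpha(0)$, and Cauchy--Schwarz plus Fubini in $\alpha$ (with $\int_{\|(h,k)\|}^r\alpha^{-n-4}\,d\alpha\sim\|(h,k)\|^{-n-3}$) then gives the $B_1$ integrand pointwise. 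Combined with Theorem~\ref{T:equiv}, this is elementary and self-contained, and it removes the Fourier analysis entirely for this half of the equivalence.

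For the hard direction you propose essentially the same multiplier-theoretic route as the paper, and the sketch is sound as far as it goes: the identity $f-\fint_{B_\alpha}f=\alpha\,T_{\psi_\alpha}(\D f)$ with $\widehat\psi=\phi/\|(\xi,\tau)\|$ is exact, and the symmetry of the ball even gives the Calder\'on--Torchinsky non-degeneracy for free ($\widehat{\chi_{B_1}}$ is real-valued and even, so $\phi$ is nonvanishing along each parabolic ray away from the origin). However, what you label ``careful bookkeeping'' is where the paper's proof lives: one must verify the decay conditions~\eqref{E:equiv:estimates} for this $\psi$, run the $H^1$--$\BMO$ duality pairing with the degree-zero homogeneous multiplier $m$ and its Calder\'on--Zygmund boundedness, and only then invoke $f\in\mathrm{Lip}(1,1/2)$ --- a hypothesis your sketch never uses --- to make $\D f$ a tempered distribution so that the $\BMO$ function recovered from duality differs from $\D f$ only by a polynomial which can then be discarded. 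Your fallback ``second-difference reconstruction'' is not a shortcut: differences of averages over displaced balls of comparable radius are not controlled by the Carleson data $B_2$ without an inversion-type identity, which is exactly what the Fourier argument supplies. So the two proofs coincide for the converse; the improvement is confined to the forward direction.
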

Perhaps the two conditions \eqref{E:Dt:riv} and \eqref{E:av:half-new} do not immediately look close, but after writing the interior integral in polar coordinates  will reveal that we are taking averages over the shells $\|(y,s)\|=const$
with the same scaling factor $\|(y,s)\|^{-3}$ as in \eqref{E:av:half-new}. The fact that the absolute value in \eqref{E:av:half-new} is on the outside of the most interior integral is the innovation \cite{YYZ} has introduced.

\begin{proof}  Let us recall the method of Strichartz adapted to the parabolic setting in \cite{DDH}. Write $F=\D f$ where $F$ is a tempered distribution.
	Let
	\begin{equation*}
		\varphi = \delta(0,0)-\chi_{\tilde{Q}_1(0,0)},
	\end{equation*}
where $Q_1(0,0)$ is a cube ball in $\R^n$. We may take a ball here instead of the cube, it will change the Fourier transform somewhat but not the gist of the argument (we prefer not to deal with Bessel's functions). Then
	\begin{equation}
		\label{E:equiv:phi:k}
		\begin{split}
			\widehat{\varphi}(\xi, \tau) &=1- \frac{1 - e^{-i\tau}}{i\tau} \prod^{n-1}_{j=1} \frac{1 - e^{-i\xi_j}}{i\xi_j}. 		\end{split}
	\end{equation}
	with~$\widehat{\varphi}(\xi, \tau) \sim |\xi|+|\tau|$ for small $\xi$ and $\tau$.
	We let
	\begin{equation*}
		\widehat{\psi} = \frac{\widehat{\varphi}(\xi, \tau)}{\|(\xi,\tau)\|},
	\end{equation*}
where $\psi_\rho$ denotes the usual parabolic dilation $\rho$ defined above.
	We may rewrite \eqref{E:av:half-new}  as
	\begin{equation}\label{I:av:grad,I:av:Dt}
		\sup_{Q_r} \frac{1}{|Q_r|} \int_{Q_r} \int_0^r
		\left(\psi_\rho * F \right)^2 \frac{\,d\rho}{\rho}\,dx\,dt
		\sim B_{2}.
	\end{equation}
	The function $\psi$  satisfy the following conditions for some $\epsilon_i > 0$
	\begin{equation}
		\label{E:equiv:estimates}
		\begin{split}
			\int \psi \dx\dt &= 0, \\
			|\psi(x,t)| &\lesssim \|(x,t)\|^{-n-1-\epsilon_1} \text{ for } \|(x,t)\| \geq a > 0, \\
			|\widehat{\psi}(\xi,\tau)| &\lesssim \|(\xi,\tau)\|^{\epsilon_2} \text{ for } \|(\xi,\tau)\| \leq 1, \\
			|\widehat{\psi}(\xi,\tau)| &\lesssim \|(\xi,\tau)\|^{-\epsilon_3} \text{ for } \|(\xi,\tau)\| \geq 1.
		\end{split}
	\end{equation}
	Therefore if $\D f = F \in \BMO(\R^n)$ then $B_{2} \lesssim \|\D\phi\|_*^2$  by~\cite[Theorem 2.1]{S80}; 
        this shows that first condition in \reftheorem{T:equiv-new} implies the second one.

	For the converse we proceed via an analogue of the proof of~\cite[Theorem 2.6]{S80}.
	Consider
	\begin{equation*}
		\hat{\theta}(\xi,\tau) = \|(\xi,\tau)\| \hat{\zeta}(\xi,\tau),
	\end{equation*}
	where $\zeta \in C^\infty_0(\R)$.
	Let $H^1_{00}$ be the dense subclass of continuous $H^1$ functions $g$ such that $g$ and all its derivatives decay rapidly, see~\cite[p.~225]{Ste70}.
	Via an analogue of~\cite[Theorem 3]{FS72}, \cite[Lemma 2.7]{S80} by assuming \eqref{I:av:grad,I:av:Dt} if $g \in H^1_{00}(\R^n)$, then 
\begin{equation}
		\label{E:equiv:duality:k}
		 \left|\int_0^\infty \iint\limits_{\R^{n-1} \times \R}
		\psi_\rho  * F(x,t) \theta_\rho * g(x,t) \,dx\,dt\frac{\,d\rho}{\rho} \right| \lesssim B^{1/2}_{2} \|g\|_{H^1}.
	\end{equation}
	Let
	\begin{equation}
		\label{E:equiv:m}
		\begin{split}
			m(\xi,\tau) &=  \int_0^\infty
			\overline{\hat{\psi} \left(-\rho\xi,-\rho^2 \tau\right)}
			\|(\xi,\tau)\| \zeta(\rho\|(\xi,\tau)\|)
			\,d\rho.
		\end{split}
	\end{equation}
	Such function $m$ is homogeneous of degree zero, smooth away from the origin and the associated Fourier multipliers $M$ is a Caldor\'on-Zygmund operator that preserves the class $H^1_{00}$ and is bounded on $H^1$.

	The non-degeneracy condition from~\cite{CT75}  is the property that $ |m(r \xi, r^2\tau)|^2$ does not vanish identically in $r$ for $(\xi,\tau) \neq (0,0)$. Hence as in \cite{DDH} we may conclude that
$(h,g) = (F,g)$ for all $g \in H^1_{00}$ where $h$ is a $\BMO$ function with $\|h\|_*^2 \lesssim B_{2}$.
	The identity $\hat{h}=\hat{F}$
	 does not need to hold at the origin wherefore $\hat{h} - \hat{F}$ may be supported at the origin and hence $F = h + p$ where $p$ is a polynomial.
	Due to the assumption $\phi\in {\rm Lip}(1,1/2)$, clearly $F$ must be a tempered distribution. Hence as in~\cite{S80} we may conclude $F = h \in \BMO(\R^n)$.  This proves the claim.
\end{proof}

Now, we shall make connection between $\dot{L}^p_{1,1/2}(\R^n)$ and the  Haj\l{}asz-Sobolev space $\dot{M}^{1,p}_d(\R^n)$. In fact, we want to consider a more general setting and work on a set $\partial\mathcal O\times\R$, where 
 $\partial\mathcal O$ is $n-1$-Ahlfors regular set with respect to the standard $n-1$-dimensional Hausdoff measure. 
 In particular this allows us to talk about the Lebesgues spaces $L^p(\partial\mathcal O)$.
Thus, when $\mathcal O=\R^{n}_+$,  we have $\partial\mathcal O=\R^{n-1}$ and therefore the discussion below applies to the space $\R^n=\R^{n-1}\times \R$ discussed above.

\begin{definition} Let $d$ be a complete metric on a measure space $(\mathcal X,\mu)$. For $p\ge 1$, $0<s\le 1$ we say that a function $f\in L^p_{loc}(\mathcal X)$ belongs to the Hajłasz–Sobolev space $M^{s,p} (\mathcal X)$ 
    if there exists a set $E$ of $\mu$-measure zero such that
\begin{equation}\label{split1bz}
|f(X)-f(Y)|\le d(X,Y)^s|[g(X)+g(Y)]\qquad \forall X,Y\in \mathcal X\setminus E.
\end{equation}
The $M^{s,p}_d(\mathcal X)$ norm of $f$ is given as the infimum of $L^p$ norms of all such $g$.
\end{definition}

In what follows we shall consider the space $M^{1,p}_d(\partial\mathcal O\times\R)$
defined with respect to the  parabolic distance function
\begin{equation}\label{PD}
d(X,X')=d((x,t),(x',t'))=\|(x-x',t-t')\|\sim |x-x'| + |t-t'|^{1/2}.
\end{equation}
The underlying measure is the $n$-dimensional Hausdorff measure on $\partial\mathcal O\times\R\subset \R^{n+1}$. Here we introduce notation that matches the convention used above, i.e., $X\in \partial\mathcal O\times\R$ has the spatial coordinate $x$ and time coordinate $t$.
\medskip

Associated with the space $\dot{M}^{1,p}$ is the so-called {\it Sobolev sharp maximal function} $f^\sharp$ defined by
\begin{equation}\label{sharp}
f^\sharp(X)=\sup_{B:x\in B}\frac1{r(B)}\fint_B|f-f_B|.
\end{equation}
Here $B$ denotes a surface ball on the set $\partial\mathcal O\times\R$ with respect to metric $d$ of radius $r(B)$, i.e., 
$$B=B_{r(B)}(Y)=\{Z\in\partial\mathcal O\times\R:d(Y,Z)<r(B)\},$$
and $f_B$ denotes the average $f_B=\fint_B f(Z)\,dZ$. \vglue1mm

Let us also introduce $M$ which is the usual (non-centred) Hardy-Littlewood parabolic maximal function
\begin{equation}\label{lHL}
M(f)(X)=\sup_{B:X\in B}\fint_B |f|\,d\mathcal H^n,
\end{equation}
where the supremum is taken over all surface parabolic balls $B$ that contain point $X$. Denote by
$$M_q(f)=M(|f|^q)^{1/q}.$$
It follows from the usual boundedness of the maximal operator (which is true since $\partial\Omega$ is an Ahlfors regular set) that $M$ is $L^p$ bounded for $p>1$ and $M_q$ for $p>q$.

Let us denote by $M^t$ and $M^x$ the variants of the maximal operator when the averages are taken only in $t$ or $x$ direction respectively. That is 
$$M^x(f)(x,t)=\sup_{\Delta:x\in \Delta}\fint_\Delta |f(y,t)|\,dy,$$
and $\Delta=\Delta(z)=\{y\in\partial\mathcal O:|y-z|<r\}$, with $M^t$ defined analogously. We also define $M^x_q$ and  $M^t_q$ in analogy to $M_q$.

As on each fibre for a fixed $t$ we have $M^x$ being bounded on $L^p(\partial\mathcal O)$ for $p>1$ we see by Fubini that $M^x$ and $M^t$ are $L^p(\partial\mathcal O\times\R)$ bounded for $p>1$ and the same is also true for $M^x_q$ and  $M^t_q$ for $p>q$.\vglue1mm

\vglue2mm

On metric spaces with doubling measure $\mu$, that is for some constant $C_s>1$
$$\mu(B(X,2r))\le C_s\mu(B(X,r)),\qquad\mbox{ for all points $X$ and $r>0$},$$
we have by \cite{KT} for all $p\ge 1$:
$$\|f\|_{\dot{M}^{1,p}_d}\sim \|f^\sharp\|_{L^p}$$
and \eqref{split1bz} holds with $g$ replaced by $f^\sharp$. In our case for $d$ defined by \eqref{PD} the volume of surface ball of radius $r$ is proportional to $r^{n+1}$ and hence the doubling constant $C_s$ can be taken to be $2^{n+1}$. As in \cite{KT} $s=\log_2 C_s$ is called the doubling dimension of our space, thus for $\partial\mathcal O\times\R$ equipped with the metric $d$ the doubling dimension is $s=n+1$.\vglue2mm

In particular,  we have on our space $\dot{M}_d^{1,p}(\partial\mathcal O\times\R)$ for a.e. $t\in\R$:
\begin{equation}\label{m1}
|f(x,t)-f(y,t)|\le |x-y|[g(x,t)+g(y,t)]\qquad\mbox {for $\mathcal H^{n-1}$-almost every }x,y\in\mathbb \partial\mathcal O,
\end{equation}
and for a.e. $x\in\partial\mathcal O$
\begin{equation}\label{m1a}
|f(x,t)-f(x,\tau)|\le |t-\tau|^{1/2}[g(x,t)+g(x,\tau)]\qquad\mbox {for almost every }t,\tau\in\mathbb R.
\end{equation}
For all $p>1$,
by Corollary 1.2 of \cite{KYZ}  when $t$ is fixed we have $\dot{L}^p_1(\R^{n-1})=\dot{F}^{1}_{p,2}(\R^{n-1})=\dot{M}_{|\cdot|}^{1,p}(\R^{n-1})$ and so \eqref{m1} is equivalent to having $\nabla f\in L^p(\R^n)$. 
Here $\dot{M}_{|\cdot|}^{1,p}$ is defined with respect to the Euclidean distance $|\cdot|$.

When $\mathcal O$ is a Lipschitz domain, the Triebel-Lizorkin space $\dot{F}^{1}_{p,2}(\partial\mathcal O)$ is still well-defined and we might take it as the definition of the space $\dot{L}^p_1(\partial\mathcal O)$. 
Thus in this case we still have $\dot{L}^p_1(\partial\mathcal O)=\dot{F}^{1}_{p,2}(\partial\mathcal O)=\dot{M}_{|\cdot|}^{1,p}(\partial\mathcal O)$ as Corollary 1.2 of \cite{KYZ} applies.  

Also since $\dot{L}^p_{1/2}(\R)=\dot{F}^{1/2}_{p,2}(\R)$, by Corollary 1.2 of \cite{KYZ}
then $\dot{L}^p_{1/2}(\R)=\dot{F}^{1/2}_{p,2}(\R)\subset \dot{F}^{1/2}_{p,\infty}(\R)=\dot{M}_{|\cdot|}^{1/2,p}(\R)$
 (here the switch to $1/2$ derivative comes due to the fact that \eqref{m1a} uses the usual Euclidean metric on $\R$).

Hence for $f\in \dot{L}^p_{1,1/2}(\partial\mathcal O\times\R)$, $p>1$ we have that both \eqref{m1} and \eqref{m1a} hold. This in turn implies that so does 
\begin{equation}\label{m1ax}
|f(x,t)-f(y,\tau)|\le d((x,t),(y,\tau))[h(x,t)+h(y,\tau)]
\end{equation}
for almost every pairs of points in $\partial\mathcal O\times\R$ with $h=M^t(g)+M^x(g)\in L^p(\partial\mathcal O\times\R)$.
It follows that 
\begin{equation}\label{m22}
\dot{L}^p_{1,1/2}(\partial\mathcal O\times\R)\subset \dot{M}_d^{1,p}(\partial\mathcal O\times\R)\quad\mbox{and}\quad \|f\|_{\dot{M}_d^{1,p}(\partial\mathcal O\times\R)}\lesssim \|f\|_{\dot{L}^p_{1,1/2}(\partial\mathcal O\times\R)},\quad p>1.
\end{equation}\vglue1mm

Consider any bounded nonnegative function supported in $B$ and let $\chi(B)=\int_B\chi$. Suppose that 
$\chi(B)\sim |B|$.  Consider
$$\left|f(X)-\frac{1}{{\chi}(B)}\int_{B}f\chi\right|=\left|\frac{1}{{\chi}(B)}\int_B (f(X)-f(Z))\chi(Z)dZ\right|
$$
$$\le \sup_{Z\in B}d(X,Z)\frac{1}{{\chi}(B)}\int_B[f^\sharp(X)+f^\sharp(Z)]\chi(Z)dZ,$$
which holds by \eqref{split1bz} and the fact that $f^\sharp$ can be used in place of $g$. Since $X\in B$ we have $\sup_{Z\in B}d(X,Z)\lesssim r(B)$ and hence 
\begin{equation}\label{sharp2}
\frac1{r(B)}\left|f(X)-\frac{1}{{\chi}(B)}\int_{B}f\chi\right|\lesssim f^\sharp(X)+\fint_Bf^\sharp,
\end{equation}
for any $X$ for which $X\in B$. \medskip

\subsection{Parabolic Regularity problem}

The previous section has defined the space $\dot{L}_{1,1/2}(\partial\mathcal O\times\R)$ in the case when $\mathcal O\subset\R^n$ is a bounded or unbounded Lipschitz domain. In particular, our definition implies that when
$$\mathcal O=\{(x',x_n):\, x_n>\phi(x')\},$$
for some Lipschitz function $\phi:\R^{n-1}\to\R$, then
$f\in \dot{L}_{1,1/2}(\partial\Omega)$ if 
\begin{equation}\label{defsp}
f\circ\pi\in \dot{L}_{1,1/2}(\R^n),\qquad\mbox{where}\qquad \pi(x',t)=(x',\phi(x'),t).
\end{equation}

\medskip

We would like to extend our definition to the case $\mathcal O$ is a uniform domain with $n-1$-Ahlfors regular boundary in a way that is consistent with the previous subsection and so that for example \eqref{m22} still holds.\medskip

Motivated by \cite{MPT} with adaptation to our setting,
we may define the spatial gradient on $\partial\mathcal O$ as above using the  \textit{Haj\l{}asz-Sobolev space}. Namely,
For a Borel function \(f:\partial\mathcal{O}\times\R\to\mathbb{R}\) we say that a Borel function \(g:\partial\mathcal{O}\times\R\to\mathbb{R}\) is a \textit{Haj\l{}asz upper spatial gradient} of \(f\) if
\[|f(X,t)-f(Y,t)|\leq |X-Y|(g(X,t)+g(Y,t))\qquad\textrm{for a.e. }(X,t),(Y,t)\in \partial\mathcal{O}\times\R.\]
We denote the collection of all Haj\l{}asz upper spatial gradients of \(f\) as \(\mathcal{D}_s(f)\). 
For a measurable function \( f \) on $\partial\Omega=\partial\mathcal O\times\R$ we define the Sobolev space \( \dot{L}_{1,1/2}^p(\partial\Omega) \) via the norm
\begin{equation}\label{xnorm}
\| f \|_{\dot{L}_{1,1/2}^p(\partial\Omega)} \coloneqq 
   \inf \{\|g\|_{L^p(\partial\Omega)}:\, g \in L^p(\partial\Omega)\cap\mathcal{D}_s(f) \} + \| D_{1/2}^t f \|_{L^p(\partial\Omega)}.
\end{equation}   
We can also see that 
\eqref{m22} still holds as it is a consequence of \eqref{m1}-\eqref{m1a} which can be shown for $f$ satisfying 
 \eqref{xnorm}. Also, it is easy to see that this defines the same space as previously for $\mathcal O$ Lipschitz (c.f. \cite{MPT}). 
\medskip

Now we are ready to define the Regularity problem.

\begin{definition}\label{def:regularity problem}
	Let \( 1 < p < \infty \), and the domain $\Omega$ be as above.
	We say that the \( L^p \) Regularity problem for \( L \) \( (R_L)_p \) is solvable 
		if for each continuous \( f \in \dot{L}_{1,1/2}^p(\partial\Omega) \) the solution to
	\[ \begin{cases}
		Lu = 0, \quad \text{in } \Omega,
        \\
		u = f, \quad \text{on } \partial\Omega,
	\end{cases} \]
	satisfies
    \begin{align}\label{eq:NSpaceBound}
        \| N[\nabla u] \|_{L^p(\partial\Omega)} \lesssim \| f \|_{\dot{L}_{1,1/2}^p(\partial\Omega)}.     
    \end{align}
\end{definition}

\begin{remark}
One might object that a condition of the type
\begin{align}\label{eq:NTimeBound}
    \| N[D_{1/2}^t u] \|_{L^p(\partial\Omega)} \lesssim \| f \|_{\dot{L}_{1,1/2}^p(\partial\Omega)},     
\end{align}
    should be satisfied too (c.f. \cite{Bro89,CRS,M, Nys06}). 
However, as it turns out, this condition is superfluous.
Indeed by \cite{D} we have 
\begin{align}\label{eq:TimeDerivative_not_needed}
    \| N[D_{1/2}^t u] \|_{L^p(\partial\Omega)} \lesssim  \| N[\nabla u] \|_{L^p(\partial\Omega)}+ \| f \|_{\dot{L}_{1,1/2}^p(\partial\Omega)},
\end{align}
    and clearly \eqref{eq:NTimeBound} follows from \eqref{eq:TimeDerivative_not_needed}
    and \eqref{eq:NSpaceBound}. This simplifies our task, as we only need to establish bounds for $N[\nabla u]$
    and the bounds for the half-time derivative will follow.
\end{remark}    
    
\subsection{Localization}    
    
We shall establish the following useful result that will be needed later.

\begin{proposition}\label{prop:Product_Rule}
Let $\Omega=\mathcal O\times\R$, where $\mathcal O\subset\R^n$ is a uniform domain with $n-1$-Ahlfors regular boundary. Assume that for some $p>1$
 $f\in \dot{L}^p_{1,1/2}(\partial\Omega)$ (with $ \dot{L}^p_{1,1/2}$ defined by \eqref{xnorm}).
 There exists $h\in L^p(\partial\Omega)$ with $\|h\|_{L^p(\partial\Omega)}\lesssim \|f\|_{\dot{L}^p_{1,1/2}(\partial\Omega)}$ for which the following holds:
 
Consider any parabolic ball $B\subset \R^n\times\R$ of radius $r$ centered at the boundary $\partial\Omega$, let $\Delta=B\cap\partial\Omega$ and  $\varphi$ be a $C_0^\infty(\mathbb R^n\times\R)$ cutoff function such that $\varphi=1$ on $2B$ and $\varphi=0$ outside $3B$ with $\|\partial_t\varphi\|_{L^\infty}\lesssim r^{-2}$ and $\|\nabla_x\varphi\|_{L^\infty}\lesssim r^{-1}$. 
 Then we have the following estimate:
$$\|(f-\textstyle\fint_{\Delta} f)\varphi\|_{\dot{L}^p_{1,1/2}(\partial\Omega)}\lesssim \|h\|_{L^p(32\Delta)}+\|D^{1/2}_t f\|_{L^p(32\Delta)}
\lesssim \|f\|_{\dot{L}^p_{1,1/2}(\partial\Omega)}.$$

Furthermore for all $(x,t)\in 8\Delta$ we have
\begin{equation}\label{245}
|D^{1/2}_t((f-\textstyle\fint_{\Delta} f)\varphi)(x,t)-D^{1/2}_tf(x,t)\varphi(x,t)|\lesssim
h(x,t).
\end{equation}
\end{proposition}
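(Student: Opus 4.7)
The plan is to establish the pointwise identity \eqref{245} first, and then to deduce the $\dot{L}^p_{1,1/2}$-norm bound from it together with a standard Hajłasz product rule in the spatial variable. Using the integral representation $D^{1/2}_t\phi(t)=c\int_\R(\phi(t)-\phi(\tau))|t-\tau|^{-3/2}d\tau$ and the Leibniz split $f(x,t)\varphi(x,t)-f(x,\tau)\varphi(x,\tau)=\varphi(x,t)(f(x,t)-f(x,\tau))+f(x,\tau)(\varphi(x,t)-\varphi(x,\tau))$, direct computation gives
\[
E(x,t):=D^{1/2}_t\bigl[(f-\fint_\Delta f)\varphi\bigr](x,t)-\varphi(x,t)D^{1/2}_t f(x,t)=c\int_\R\frac{(f(x,\tau)-\fint_\Delta f)(\varphi(x,t)-\varphi(x,\tau))}{|t-\tau|^{3/2}}d\tau,
\]
so \eqref{245} reduces to proving $|E(x,t)|\lesssim h(x,t)$ on $8\Delta$ for some $h$ with $\|h\|_{L^p(\partial\Omega)}\lesssim\|f\|_{\dot L^p_{1,1/2}(\partial\Omega)}$.

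To bound $|E|$ I would split the $\tau$-integral at $|t-\tau|=r^2$. On the near piece the Lipschitz bound $|\varphi(x,t)-\varphi(x,\tau)|\lesssim r^{-2}|t-\tau|$ turns the kernel into a half-order fractional integrand, and a dyadic summation controls it by $r^{-1}M^t[|f(x,\cdot)-\fint_\Delta f|](t)$. On the far piece I use $|\varphi(x,t)-\varphi(x,\tau)|\le 2$ together with the fact that $\varphi(x,\cdot)$ is supported in a time interval of length $\lesssim r^2$; integrability of $|t-\tau|^{-3/2}$ at infinity reduces the contribution to the same $r^{-1}M^t$ estimate, with extra polynomial decay $(r^2/|t-t_0|)^{3/2}$ when $t$ lies outside the time-support of $\varphi(x,\cdot)$. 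It then remains to dominate $r^{-1}|f(x,\tau)-\fint_\Delta f|$ by a local $L^p$-bounded quantity. For $(x,\tau)\in 32\Delta$, the Sobolev sharp maximal estimate \eqref{sharp2} yields $r^{-1}|f(x,\tau)-\fint_\Delta f|\lesssim f^\sharp(x,\tau)+\fint_{32\Delta}f^\sharp\lesssim M(f^\sharp)(x,\tau)$, and $\|f^\sharp\|_{L^p(\partial\Omega)}\lesssim\|f\|_{\dot{M}^{1,p}_d(\partial\Omega)}\lesssim\|f\|_{\dot L^p_{1,1/2}(\partial\Omega)}$ by \eqref{m22}. Choosing $h$ of the form $C\,M(f^\sharp)+C\,M(D^{1/2}_t f)$ (the second summand will absorb the tail of $E$ outside $8\Delta$) establishes \eqref{245} and gives $\|h\|_{L^p(\partial\Omega)}\lesssim\|f\|_{\dot L^p_{1,1/2}(\partial\Omega)}$ by $L^p$-boundedness of the maximal operators for $p>1$.

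For the $\dot L^p_{1,1/2}$-norm of $F=(f-\fint_\Delta f)\varphi$, the spatial Hajłasz product rule applied via $|F(x,t)-F(y,t)|\le\varphi(x,t)|f(x,t)-f(y,t)|+|f(y,t)-\fint_\Delta f||\varphi(x,t)-\varphi(y,t)|$ together with $|\varphi(x,t)-\varphi(y,t)|\lesssim r^{-1}|x-y|$ produces the upper gradient $G=\varphi g+r^{-1}|f-\fint_\Delta f|\chi_{3\Delta}$, whose $L^p$-norm is bounded by $\|g\|_{L^p(3\Delta)}+\|h\|_{L^p(32\Delta)}$ via the Poincaré-type estimate above. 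The time derivative satisfies $\|D^{1/2}_t F\|_{L^p(\partial\Omega)}\le\|\varphi D^{1/2}_t f\|_{L^p}+\|E\|_{L^p}\lesssim\|D^{1/2}_tf\|_{L^p(3\Delta)}+\|h\|_{L^p(32\Delta)}$, where the contribution of $E$ outside $8\Delta$ is absorbed using the $(r^2/|t-t_0|)^{3/2}$ decay noted above. The main obstacle is precisely the non-locality of $D^{1/2}_t$: even though $\varphi$ is compactly supported, $E$ is not, so one must balance the Lipschitz and support information about $\varphi$ against the $|t-\tau|^{-3/2}$ kernel both near the diagonal and in the tail, and then verify that the resulting bound localizes to $32\Delta$ on the right-hand side rather than spreading over all of $\partial\Omega$.
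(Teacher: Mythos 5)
Your proposal follows essentially the same route as the paper: the same Leibniz split producing the commutator
\[
E(x,t)=c\int\frac{(f(x,\tau)-\fint_\Delta f)(\varphi(x,t)-\varphi(x,\tau))}{|t-\tau|^{3/2}}\,d\tau,
\]
the same near/far decomposition (Lipschitz bound plus dyadic summation near the diagonal, support and $|t-\tau|^{-3/2}$ decay far away), the same use of the sharp maximal estimate \eqref{sharp2} to convert $r^{-1}|f-\fint_\Delta f|$ into $f^\sharp$, and the same Hajłasz product rule for the spatial part. The logical structure matches the paper's proof step for step.

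There is one small but genuine slip in your choice of $h$. After bounding $|E(x,t)|\lesssim r^{-1}M^t[|f(x,\cdot)-\fint_\Delta f|](t)$, applying \eqref{sharp2} inside the time-average gives a bound by $M^t(f^\sharp)(x,t)+\fint_{8\Delta}f^\sharp$, i.e.\ you get the \emph{composed} quantity $M^t(f^\sharp)$, and this is \emph{not} pointwise dominated by the parabolic maximal function $M(f^\sharp)$ that you propose to put into $h$ (averages over a time segment can exceed averages over a full parabolic ball). The paper's $h$ is precisely $M^{t}(f^\sharp)+M(f^\sharp)$ (truncated to the relevant scale), both of which are $L^p$-bounded for $p>1$, so the fix is simply to add the $M^t(f^\sharp)$ term. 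Also, the extra summand $M(D^{1/2}_tf)$ in your $h$ is unnecessary: the paper handles the contribution of $\varphi D^{1/2}_t f$ directly in the $\|D^{1/2}_t f\|_{L^p(32\Delta)}$ term of the conclusion and bounds the tail of $E$ outside $8\Delta$ by the $(2^{-3i/2})$-decay together with the same $h$, so $h$ needs only $f^\sharp$-data. With that correction to the definition of $h$ your argument is sound and coincides with the paper's.
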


\begin{proof}  Recall that we know that we have for $f$ \eqref{m1ax} and hence
\begin{equation}\label{split1bb}
|f(x,t)-f(y,\tau)|\le d((x,t),(y,\tau))[f^\sharp(x,t)+f^\sharp(y,\tau)]
\end{equation}
for a.e. pair od points $(x,t),\,(y,\tau)\in\partial\Omega$.
We have to be very careful where we use the condition \eqref{split1bb} as this does not characterise the space of functions for which $\|D^{1/2}_tf\|_{L^p}<\infty$. 
We have lost a bit of regularity when we have used in the previous section the inclusion $\dot F^{1/2}_{p,2}\subset \dot F^{1/2}_{p,\infty}$ which is proper.  
However, we assume $\varphi\in C_0^\infty(\mathbb R^n\times\R)$ which is better than just $D^{1/2}_t\varphi\in L^p(\mathbb R^n\times\R)$. This is an extra piece of regularity that will make our argument work. 
We calculate $D^{1/2}_t((f-\textstyle\fint_{\Delta} f)\varphi)$. Clearly,
\begin{equation}
D^{1/2}_t((f-\textstyle\fint_{\Delta} f)\varphi)(x,t)=c\displaystyle\int_{\mathbb R}\frac{(f(x,t)-\textstyle\fint_{\Delta} f)\varphi(x,t)-(f(x,s)-\textstyle\fint_{\Delta} f)\varphi(x,s)}{|t-s|^{3/2}}ds
\end{equation}
\begin{equation}\label{split22}
=\varphi(x,t)D^{1/2}_t(f)(x,t)+c\int_{s\in\mathbb R\cap\{(x,s)\in 3\Delta\}}\frac{\varphi(x,t)-\varphi(x,s)}{|t-s|^{3/2}}(f(x,s)-\textstyle\fint_{\Delta} f)ds.
\end{equation}
The second term needs to be considered further. 
We need to think about when the term $\varphi(x,t)-\varphi(x,s)\ne 0$ as otherwise there is nothing to do. If the point $(x,t)$ is far from support of some enlargement of $\Delta$, say $8\Delta$, whenever $\varphi(x,t)-\varphi(x,s)\ne 0$ we will have $|t-s|\approx 2^ir^2$ for some $i=2,3,\dots,$ for all $s\in\mathbb R$ such that $(x,s)\in3\Delta$.
We use a crude bound for the numerator $|\varphi(x,t)-\varphi(x,s)|\le 1$. This allows us to conclude that when $(x,t)\notin 8\Delta$ then the second term will be bounded by
$$\left|\int_{\mathbb R}\frac{\varphi(x,t)-\varphi(x,s)}{|t-s|^{3/2}}(f(x,s)-\textstyle\fint_{\Delta} f)ds\right|
\lesssim (2^{-3i/2})r^{-1}\fint_{s\in3\Delta}|f(x,s)-\textstyle\fint_{\Delta} f|ds$$
\begin{equation}\label{split11}
\lesssim (2^{-3i/2})r^{-1} M^{t<4r^2}(|f-\textstyle\fint_{\Delta} f|)(x,t'),\quad\mbox{ for some } i\ge2.
\end{equation}
Here $(x,t')$ is an arbitrary point in $3\Delta$ and $M^{t<4r^2}$ is the Hardy-Littlewood maximal function in $t$-variable only on all balls of size $<4r^2$.

This leaves us to consider the case $(x,t)\in 8\Delta$. It is then possible that $|t-s|$ is arbitrary small for $s\in\mathbb R$ such that $\varphi(x,t)-\varphi(x,s)\ne 0$. We use the fact that $\varphi$ is Lipschitz in $t$-variable with Lipschitz constant $\approx r^{-2}$. We also split the integral to sets on which $|s-t|\approx 2^{-j}r^2$ for $j=0,1,2,\dots$. This gives us for the second term of \eqref{split22}:
\begin{equation}\label{splitzz}
\left|\int_{\mathbb R}\frac{\varphi(x,t)-\varphi(x,s)}{|t-s|^{3/2}}(f(x,s)-\textstyle\fint_{\Delta} f)ds\right|
\lesssim \sum_{j=0}^\infty{r^{-2}}\int_{|s-t|\approx 2^{-j}r^2}\frac{|f(x,s)-\textstyle\fint_{\Delta} f|}{|t-s|^{1/2}}ds.
\end{equation}
By introducing extra term $|t-s|^{1/2}$ into the denominator we again recognise a maximal function in $t$-variable. Thus continuing our calculation we have
$$\lesssim \sum_{j=0}^\infty 2^{-j/2}r^{-1} M^{t<16r^2}(|f-\textstyle\fint_{\Delta} f|)(x,t)\approx
r^{-1}M^{t<16r^2}(|f-\textstyle\fint_{\Delta} f|)(x,t).$$
We now estimate this maximal function. Let $(x,t)\in 8\Delta$. Using \eqref{sharp2} for $B=8\Delta$
we get that
\begin{equation}\label{sharp2x}
r^{-1}\left|f(x,t)-\fint_{\Delta}f\right|\lesssim f^\sharp(x,t)+\fint_{8\Delta}f^\sharp,
\end{equation}
and hence averaging over any time-only ball $\Delta'$ that contains $(x,t)$ we get that
$$
r^{-1}\left|\fint_{\Delta'}f(x,t)-\fint_{\Delta}f\right|\lesssim \fint_{\Delta'}f^\sharp(x,t)+\fint_{8\Delta}f^\sharp,
$$
and after taking sup over all such balls of size $t<16r^2$ we get that
$$r^{-1} M^{t<16r^2}(|f-\textstyle\fint_{\Delta} f|)(x,t)\lesssim  M^{t<16r^2}(f^\sharp)(x,t)
+M^{<8r}(f^\sharp)(x,t)=:h(x,t).$$
We take this as the definition of the function $h$. Clearly, since $M^t$ and $M$ are $L^p$ bounded for $p>1$ we have that $\|h\|_{L^p(\partial\Omega)}\lesssim  \|f\|_{\dot{L}^p_{1,1/2}(\partial\Omega)}$.

Combing this with \eqref{split22}  and \eqref{splitzz}
we obtain for $(x,t)\in 8\Delta$
\begin{align}
&&
|D^{1/2}_t((f-\textstyle\fint_{\Delta} f)\varphi)(x,t)-D^{1/2}_tf(x,t)\varphi(x,t)|\lesssim
h(x,t),
\end{align}
thus proving \eqref{245}.
After we integrate the inequality above over $8\Delta$ we obtain:
$$\int_{8\Delta} [D^{1/2}_t((f-\textstyle\fint_{\Delta}f)\varphi]^p\,dx\,dt\lesssim \displaystyle\int_{8\Delta} [|D^{1/2}_tf|^p+h^p]dx\,dt.$$

Next we estimate the same integral on $\mathbb R^n\setminus 8\Delta$. We only need to consider integrating over $(x,t)$ for which there is $s\in\mathbb R$ such that $(x,s)\in 8\Delta$. Let us call such $s=s_0$. Let us call $B$ the set of such points $x$. It follows that
$$\int_{\mathbb R^n\setminus 8\Delta} [D^{1/2}_t((f-\textstyle\fint_{\Delta}f)\varphi]^p\,dx\,dt =
\displaystyle\int_{(B\times \mathbb R)\setminus 8\Delta} [D^{1/2}_t((f-\textstyle\fint_{\Delta}f)\varphi]^p\,dx\,dt.$$
Recalling \eqref{split11} and using the estimate in \eqref{split22} we see that after further splitting the integral above into regions such that the distance of points $(x,t)$ to $4\Delta$ is approximately $2^{i}r^2$ we have that
$$\hspace{-7cm}\int_{\mathbb R^n\setminus 8\Delta} [D^{1/2}_t((f-\textstyle\fint_{\Delta}f)\varphi]^p\,dx\,dt$$
$$\lesssim\sum_{i=2}^\infty \displaystyle\int_{(B\times \{t\in\mathbb R:{ dist}((x,t),4\Delta)\approx 2^ir^2\}}
(2^{-3ip/2})r^{-p} [M^{t<4r^2}(|f-\textstyle\fint_{\Delta} f|)]^p(x,s_0)dx\,dt$$
$$\lesssim \sum_{i=2}^\infty 2^{i(1-3p/2)} \int_{32\Delta} h^p\,dx\,dt,$$
which shows that $D^{1/2}_t$ of our function belongs to $L^p$. For the spatial \lq\lq gradient" the argument is easier. With $h$ as above we have that
$$|(f-\textstyle\fint_{\Delta}f)\varphi(x,t)-(f-\textstyle\fint_{\Delta}f)\varphi(y,t)|\le
|x-y||f(x,t)-f(y,t)||\varphi(x,t)|$$
$$+|(f(y,t)-\textstyle\fint_{\Delta}f)||\varphi(x,t)-\varphi(y,t)|
\lesssim |x-y|[f^\sharp(x,t)+f^\sharp(y,t)+h(y,t)],
$$
where we have used the earlier estimate for $r^{-1} M^{t<16r^2}(|f-\textstyle\fint_{\Delta} f|)(y,t)$.
This is a good estimate if both $(x,t),(y,t)\in 8\Delta$.
If one of the points is not inside $8\Delta$ but $|x-y|\le r$ we trivially have
$$|(f-\textstyle\fint_{\Delta}f)\varphi(x,t)-(f-\textstyle\fint_{\Delta}f)\varphi(y,t)|=0.$$
and finally if $(x,t)\in 8\Delta$, $(y,t)\notin 8\Delta$ but $|x-y|>r$
 we have
 a trivial estimate
$$|(f-\textstyle\fint_{\Delta}f)\varphi(x,t)|\le |x-y|r^{-1}M_{t<16r^2}(|f-\textstyle\fint_{\Delta} f|)(x,t)
\lesssim |x-y|h(x,t),$$
which shows that for $G(x,t)=h(x,t)\chi_{8\Delta}$ it always holds that
$$|(f-\textstyle\fint_{\Delta}f)\varphi(x,t)-(f-\textstyle\fint_{\Delta}f)\varphi(y,t)|\lesssim
|x-y|[G(x,t)+G(y,x)].$$
Hence $G$ is the Hajłasz–Sobolev spatial gradient of the function $(f-\textstyle\fint_{\Delta}f)\varphi$ and furthermore $\|G\|_{L^p(\partial\Omega)}\lesssim \|h\|_{L^p(8\Delta)}
\lesssim \|f\|_{\dot{L}^p_{1,1/2}(\partial\Omega)}$ as claimed.
\end{proof}

\section{Parabolic results}\label{S3}

Below, we will list some known properties of solutions to \( Lu=0 \) in \( \Omega \).
We would like to highlight    \reflemma{lemma:Gradient L2toL1} and \reflemma{lemma:|u|bound} 
    that will be key in proving \reftheorem{thm:Reverse Holder}.  Results here that involve boundary have all been established in the setting we assume, that is $\Omega=\mathcal O\times \R$ and hence $\partial\Omega=\partial\mathcal O\times\R$ or on even more general boundaries (such as 1-sided NTA domains with time-symmetric Ahlfors regular boundary c.f. \cite{GH,GHMN}).

\begin{proposition}[Boundary Poincar\'e inequality \cite{DU,U}] \label{prop:Boundary Poincare}
    Let \( \Delta= \Delta_r(P_0) \) be a boundary ball of \(\partial\Omega\) 
        and suppose that \(u = 0\) on \( 5\Delta \).
    Then
    \begin{align}\label{eq:Poincare_inequality_two}
        \int_{T(\Delta)} \Big|\frac{u}{\delta}\Big|^2 
            \lesssim \int_{T(5\Delta)} |\nabla u|^2.
    \end{align}
\end{proposition}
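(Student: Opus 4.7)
The plan is to exploit the cylindrical structure $\Omega=\mathcal{O}\times\mathbb{R}$: here $\delta$ is purely spatial, $\delta(X,t)=\dist(X,\partial\mathcal{O})$, and $\partial\Omega=\partial\mathcal{O}\times\mathbb{R}$. Using Fubini I would slice the Carleson region in the time variable and reduce the claim, slice by slice, to the (elliptic) boundary Hardy inequality on the $1$-sided CAD $\mathcal{O}$, then integrate back in $t$.

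\textbf{Geometry of the slices.} Fix the centre $(P_0,t_0)$ of $\Delta=\Delta_r(P_0,t_0)$. From the parabolic norm \eqref{E:par-norm}, for each $s$ with $|s-t_0|<r^2$ the time slice of the Carleson region is
\[ T(\Delta)\cap\{t=s\}=\{X\in\mathcal{O}:|X-P_0|<r(s)\},\qquad r(s):=r\sqrt{1-(s-t_0)^2/r^4}\leq r, \]
while the slice of the vanishing set $5\Delta$ at the same time is $\{P\in\partial\mathcal{O}:|P-P_0|<R(s)\}$ with $R(s):=5r\sqrt{1-(s-t_0)^2/(5r)^4}$. A one-line computation (since $1/(625r^4)\leq 1/r^4$) gives $R(s)\geq 5\,r(s)$ for all $|s-t_0|\leq r^2$, so in each slice the function $u(\cdot,s)$ vanishes on a spatial boundary portion of radius at least $5\,r(s)$ around $P_0$.

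\textbf{Slice-wise bound and integration.} Next I would invoke the standard elliptic boundary Hardy inequality on the uniform/ADR domain $\mathcal{O}$ (which follows from corkscrew points plus Harnack chains in the usual way, and is the elliptic origin of \cite{DU,U}): whenever $v\in W^{1,2}_{\mathrm{loc}}(\mathcal{O})$ vanishes on $\partial\mathcal{O}\cap\{|P-P_0|<5\rho\}$, one has
\[ \int_{\mathcal{O}\cap\{|X-P_0|<\rho\}}\left|\frac{v}{\delta}\right|^2 dX \;\lesssim\; \int_{\mathcal{O}\cap\{|X-P_0|<5\rho\}}|\nabla v|^2\, dX. \]
Applying this to $v=u(\cdot,s)$ with $\rho=r(s)$ is legal by the previous paragraph. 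Integrating in $s$ over $(t_0-r^2,t_0+r^2)$ yields the claim, since any $(X,s)$ with $|X-P_0|\leq 5r$ and $|s-t_0|\leq r^2$ satisfies $\|(X-P_0,s-t_0)\|\leq 5r$, so $\{|s-t_0|<r^2\}\times\bigl(\mathcal{O}\cap\{|X-P_0|<5r\}\bigr)\subset T(5\Delta)$.

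\textbf{Main obstacle.} The argument is essentially geometric bookkeeping; the only delicate point is the matching of scales at the endpoint $s=t_0$, where $5\,r(s)=R(s)=5r$ so the vanishing portion is just barely large enough to invoke the elliptic Hardy inequality with expansion $5$. Should this borderline create any discomfort, a clean workaround is to use the elliptic Hardy inequality with a slightly smaller expansion factor — still valid on $1$-sided CADs, only with a worse constant — which gives strict inequalities $R(s)>C\,r(s)$ throughout and removes all ambiguity.
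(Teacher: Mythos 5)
The paper does not prove this proposition; it is cited from \cite{DU,U}. Your slicing argument is the natural route for cylindrical domains — since $\delta(X,t)=\dist(X,\partial\mathcal{O})$ is purely spatial, Fubini plus the slice-by-slice elliptic boundary Hardy inequality (valid on uniform/ADR domains) reduces everything to a geometric bookkeeping of radii, and that bookkeeping you do correctly up to one point. Your check that $R(s)\geq 5\,r(s)$ is right, so every slice satisfies the hypothesis of the elliptic inequality with expansion factor exactly $5$; there is no endpoint trouble at $s=t_0$, since $R(t_0)=5\,r(t_0)$ is precisely the required factor, not "barely" so.

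The actual slip is the containment claim at the very end: it is \emph{not} true that $|X-P_0|\leq 5r$ and $|s-t_0|\leq r^2$ force $\|(X-P_0,\,s-t_0)\|\leq 5r$. At the corner $|X-P_0|=5r$, $|s-t_0|=r^2$ one has
\[
\frac{(5r)^2}{(5r)^2}+\frac{(r^2)^2}{(5r)^4}=1+\frac{1}{625}>1,
\]
so $\{|X-P_0|<5r\}\times\{|s-t_0|<r^2\}\not\subset T(5\Delta)$. This is exactly the kind of ambiguity your proposed workaround removes: run the elliptic Hardy inequality with a smaller expansion factor, say $4$ instead of $5$ (still available, with a different constant). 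Then the slice hypothesis $R(s)\geq 4\,r(s)$ holds with room to spare, and the right-hand side becomes an integral over $\{|X-P_0|<4r\}\times\{|s-t_0|<r^2\}$, for which $\frac{16}{25}+\frac{1}{625}<1$, so this set genuinely sits inside $T(5\Delta)$. With that one numerical adjustment your proof is correct and complete. One small side remark: the elliptic boundary Hardy inequality is not really a Harnack-chain statement — it is a Sobolev-space Hardy inequality whose validity on $1$-sided chord-arc domains rests on the ADR (hence uniformly fat in the relevant capacity sense) boundary providing a large vanishing set near each point — but since you are invoking it as a black box this does not affect the argument.
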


\begin{proposition}[Parabolic Cacciopolli, Lemma 3.3 of Chapter I in \cite{HL01}]\label{prop:Cacciopolli}
For a weak solution \(u\) in \(Q(X,t,4r)\subset\Omega\) we have
\begin{align*}
    r^n(\max_{Q(X,t,r/2)}|u|)^2&\lesssim \sup_{s\in (t-r^2,t+r^2)}\int_{Q_{X}(X,r)\times \{s\}} |u|^2 dZd\tau
    \\
    &\lesssim \int_{Q(X, t,r)} |\nabla u|^2 dZd\tau
    \\
    &\lesssim \frac{1}{r^2}\int_{Q(X, t,2r)} |u|^2 dZd\tau.
\end{align*}
\end{proposition}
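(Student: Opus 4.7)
This is a classical parabolic interior estimate. Rather than a single inequality, the chain displays three standard bounds, each obtained from the weak formulation $Lu=0$ by testing against carefully chosen cutoffs. I would establish them in reverse order: the rightmost Caccioppoli inequality first, from which the middle slice-energy bound is an immediate byproduct, and then the $L^\infty$ control via Moser iteration for the leftmost.

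For Caccioppoli, pick $\eta \in C_c^\infty(Q(X,t,2r))$ with $\eta \equiv 1$ on $Q(X,t,r)$, $|\nabla \eta| \lesssim 1/r$, $|\partial_\tau \eta| \lesssim 1/r^2$, and test the weak formulation of $Lu=0$ against $u\eta^2$. Integrating by parts in both spatial and time variables (handling the parabolic term via $\partial_\tau u \cdot u = \tfrac{1}{2}\partial_\tau(u^2)$) gives
\[
-\tfrac{1}{2}\int u^2 \partial_\tau(\eta^2) + \int (A\nabla u)\cdot \nabla u\,\eta^2 = -2 \int (A\nabla u)\cdot (u\eta \nabla \eta).
\]
Uniform ellipticity and Young's inequality absorb the cross term, yielding $\int|\nabla u|^2\eta^2 \lesssim \int u^2\bigl(|\nabla \eta|^2 + |\partial_\tau \eta|\bigr)$, which is the rightmost bound. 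A variant, in which $\eta^2$ is replaced by $\eta^2 \mathbf{1}_{\tau \leq s}$ and one tracks the boundary term at $\tau = s$, produces $\sup_{s}\int_{\text{slice}} u^2\eta^2 \lesssim r^{-2}\int_{Q(X,t,2r)} u^2$, which combined with Caccioppoli justifies the middle inequality in the chain (with each side controlled by the rightmost quantity).

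For the maximum bound (left inequality), apply Moser iteration. For $q \geq 1$, testing against $u|u|^{2q-2}\eta^2$ and repeating the energy scheme produces a Caccioppoli-type estimate for $v = |u|^q$. Combining it with the parabolic Sobolev embedding
\[
\|v\|_{L^{2\kappa}(Q)}^2 \lesssim \sup_s\|v(\cdot,s)\|_{L^2(Q_X)}^2 + \|\nabla v\|_{L^2(Q)}^2,
\]
for an appropriate $\kappa > 1$, yields a reverse-Hölder inequality $\|u\|_{L^{2q\kappa}(Q(X,t,r_1))} \lesssim (r_2-r_1)^{-\alpha}\|u\|_{L^{2q}(Q(X,t,r_2))}$ on nested cubes. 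Iterating along a geometric sequence $r_k \searrow r/2$ with $q$ increasing geometrically, and using that the constants telescope to a finite product, delivers the pointwise maximum bound in terms of the slice $L^2$-sup on $Q(X,t,r)$.

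The main technical obstacle, as usual for Moser iteration, is the careful bookkeeping of parabolic scaling --- in particular, ensuring that the telescoping constants across the iteration converge, and that the $\partial_\tau \eta$ contribution (which scales like $r^{-2}$ rather than $r^{-1}$) is absorbed correctly against $|\nabla u|^2$ via the parabolic Sobolev inequality. These are routine but lengthy calculations, which is why the authors cite \cite{HL01} directly rather than reproducing the full argument here.
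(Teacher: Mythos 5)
The paper does not prove this proposition; it cites Lemma~3.3 of Chapter~I in \cite{HL01} verbatim, so there is no in-paper argument to compare against. Your proof supplies the missing standard argument, and the overall structure (Caccioppoli from testing against $u\eta^2$, slice--energy from a temporal truncation of the cutoff, Moser iteration for the $L^\infty$ bound) is exactly the classical route.

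One thing you handled correctly but might state more forcefully: the middle inequality as literally displayed, namely
\[
\sup_{s}\int_{Q_X(X,r)\times\{s\}}|u|^2 \;\lesssim\; \int_{Q(X,t,r)}|\nabla u|^2,
\]
is false as written --- constants solve $Lu=0$ and make the left side positive while annihilating the right. What the Hofmann--Lewis lemma actually gives is that \emph{both} the slice $L^2$-sup \emph{and} $\int|\nabla u|^2$ over $Q(X,t,r)$ are bounded by $r^{-2}\int_{Q(X,t,2r)}|u|^2$; the chain in the statement is a transcription of a sum as a string of $\lesssim$'s. Your parenthetical \lq\lq with each side controlled by the rightmost quantity" is the right reading, so the proof is not deficient, but a reader of the proposition alone would not realize the middle step requires this interpretation. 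For the leftmost inequality, it would also be cleaner to record explicitly the elementary conversion: Moser iteration gives $r^{n+2}(\max_{Q(X,t,r/2)}|u|)^2\lesssim\int_{Q(X,t,r)}|u|^2$, and then $\int_{Q(X,t,r)}|u|^2\le 2r^2\sup_s\int_{\mathrm{slice}}|u|^2$ converts this to the stated $r^n$-scaled bound against the slice sup. With these two caveats noted, the argument is correct.
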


\begin{proposition}[Boundary Cacciopolli \cite{DU}]\label{prop:Boundary Cacciopolli}
    Let \(u\) be a solution in \(T(\Delta(P,s,2r))\) that vanishes in \(\Delta(P,s,2r)\). Then
    \[\int_{T(\Delta(P,s,r))}|\nabla u|^2\lesssim \frac{1}{r^2}\int_{T(\Delta(P,s,2r))}| u|^2.\]
\end{proposition}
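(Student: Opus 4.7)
The strategy is the standard energy/Caccioppoli argument, adapted to the parabolic setting, using the vanishing trace of $u$ on $\Delta(P,s,2r)$ to justify that cut-offs of $u$ are admissible test functions. Write $\Delta_\rho = \Delta(P,s,\rho)$ and $T_\rho = T(\Delta_\rho)$. First, I would choose a smooth cutoff $\eta$ on $\R^n\times\R$ with $\eta\equiv 1$ on $B(P,s,r)$, $\Supp\eta\subset B(P,s,2r)$, and the usual parabolic bounds $|\nabla_X\eta|\lesssim 1/r$ and $|\partial_t\eta|\lesssim 1/r^2$. Since $u$ vanishes on $\Delta_{2r}$ and $\eta$ is compactly supported in $B(P,s,2r)$, the function $\phi=\eta^2 u$ is a valid test function for the weak formulation of $Lu=0$ (it lies in the appropriate parabolic energy space and has zero trace on $\partial\Omega$).

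Testing against $\phi=\eta^2 u$ gives
\[
\int_\Omega A\nabla u\cdot\nabla(\eta^2 u)\dX\dt + \int_\Omega (\eta^2 u)\,\partial_t u\dX\dt = 0,
\]
where the time term comes from the $-\partial_t$ in $L$ (integration by parts in the standard way, using that $\eta$ is compactly supported in time). Expanding the spatial term and writing the time term as $\tfrac{1}{2}\int \eta^2 \partial_t(u^2)$ and integrating by parts in $t$ yields
\[
\int_\Omega \eta^2 A\nabla u\cdot\nabla u \dX\dt
 = -2\int_\Omega \eta u\, A\nabla u\cdot\nabla\eta\dX\dt + \int_\Omega \eta(\partial_t\eta)u^2\dX\dt.
\]

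Next, I would apply uniform ellipticity on the left and Cauchy--Schwarz followed by Young's inequality $2ab\le\varepsilon a^2+\varepsilon^{-1}b^2$ on the first term on the right, to absorb an $\varepsilon\int\eta^2|\nabla u|^2$ into the left-hand side. Together with the bounds $|\nabla\eta|\lesssim 1/r$ and $|\partial_t\eta|\lesssim 1/r^2$, this produces
\[
\int_{T_r}|\nabla u|^2 \lesssim \int_\Omega \eta^2|\nabla u|^2 \lesssim \frac{1}{r^2}\int_{T_{2r}}|u|^2,
\]
which is the claimed estimate.

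The only genuinely non-routine point is the justification that $\eta^2 u$ is admissible, which requires that $u\in W^{1,2}_{\mathrm{loc}}$ up to the boundary in the part of $\partial\Omega$ where it vanishes; this is the content of the \lq\lq vanishes in $\Delta(P,s,2r)$" hypothesis interpreted in the trace sense, and is standard for $1$-sided CAD domains as in \cite{DU}. The time-integration by parts is likewise standard after a mollification in $t$ (replace $u$ by $u_\epsilon=u\ast\rho_\epsilon$, run the argument, pass to the limit), which is the only technical wrinkle worth mentioning but does not alter the structure of the estimate.
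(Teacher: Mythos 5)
The paper does not give its own proof of this proposition; it simply cites \cite{DU}. Your argument is the standard Caccioppoli energy estimate adapted to the parabolic setting — testing the weak formulation against $\eta^2 u$ with a parabolic cutoff, expanding, absorbing the cross term via Young's inequality, and controlling the $\partial_t$ contribution by integrating by parts in time using the compact time-support of $\eta$ — which is exactly the expected route, so there is nothing to contrast against. Your accounting of the two genuine technicalities (admissibility of $\eta^2 u$ as a test function, justified by the vanishing trace of $u$ on $\Delta_{2r}$ and the compact support of $\eta$; and the time integration by parts, justified via Steklov averaging or time mollification since $\partial_t u$ is only a distribution a priori) is appropriate, and the cutoff bounds $|\nabla_X\eta|\lesssim r^{-1}$, $|\partial_t\eta|\lesssim r^{-2}$ correctly respect the parabolic scaling. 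The argument is correct.
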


\begin{proposition}\label{prop:Gradient_LptoL2} There exists $p>2$ such that if
     \( Q \) is a parabolic cube with \( 2Q \subset \Omega \) then
    \begin{align*}
        \Big( \fint_{Q} |\nabla u|^p \Big)^{1/p} 
        \lesssim \Big( \fint_{2Q} |\nabla u|^2 \Big)^{1/2}     
    \end{align*}
    Moreover, if \( u = 0 \) on \( 4\Delta \)
        we can replace \( Q \) with \( T(\Delta) \) in the above inequality.
\end{proposition}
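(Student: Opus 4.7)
This is a parabolic Meyers-type self-improvement of integrability for $\nabla u$, and the plan is to derive a reverse H\"older inequality with a self-improving gap on both sides of the exponent $2$ and then apply a parabolic version of Gehring's lemma.

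First, for the interior statement, fix a parabolic cube $Q = Q_r(X,t)$ with $2Q \subset \Omega$. By the parabolic Caccioppoli inequality of \refproposition{prop:Cacciopolli}, applied to $u - c$ for any constant $c$ (which is still a solution), we obtain
\[
    \fint_{Q} |\nabla u|^2 \lesssim \frac{1}{r^2} \fint_{2Q} |u - c|^2.
\]
Choosing $c$ to be the parabolic average of $u$ over $2Q$ and invoking the parabolic Sobolev-Poincar\'e inequality (which on a parabolic cube reads $\bigl(\fint_{2Q} |u-c|^2\bigr)^{1/2} \lesssim r \bigl(\fint_{4Q} |\nabla u|^q\bigr)^{1/q}$ for some $q < 2$ depending only on $n$, via the scaling $\nabla u \sim r^{-1}u$ and the parabolic Gagliardo-Nirenberg embedding), we arrive at
\[
    \Bigl(\fint_{Q} |\nabla u|^2\Bigr)^{1/2} \lesssim \Bigl(\fint_{4Q} |\nabla u|^q\Bigr)^{1/q}
\]
for every parabolic cube with $4Q \subset \Omega$. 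This is exactly a reverse H\"older inequality with gap on the wrong side: applying the parabolic Gehring lemma (a measure-theoretic self-improvement result, see e.g.\ Giaquinta-Struwe for its parabolic version), we conclude that there exists $p > 2$ such that
\[
    \Bigl(\fint_{Q} |\nabla u|^p\Bigr)^{1/p} \lesssim \Bigl(\fint_{2Q} |\nabla u|^2\Bigr)^{1/2},
\]
which is the claimed inequality.

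For the boundary version, the strategy is identical but uses the boundary analogues. Given a surface ball $\Delta$ with $u = 0$ on $4\Delta$, we replace \refproposition{prop:Cacciopolli} with the boundary Caccioppoli \refproposition{prop:Boundary Cacciopolli}:
\[
    \int_{T(\Delta)} |\nabla u|^2 \lesssim \frac{1}{r^2} \int_{T(2\Delta)} |u|^2.
\]
Because $u$ vanishes on a portion of the boundary, the boundary Poincar\'e inequality of \refproposition{prop:Boundary Poincare} is available without subtracting an average, giving $r^{-2}\int_{T(2\Delta)} |u|^2 \lesssim \int_{T(10\Delta)} |u/\delta|^2 \cdot (\text{scale})$, which combined with the standard Sobolev embedding yields the reverse H\"older bound $\bigl(\fint_{T(\Delta)} |\nabla u|^2\bigr)^{1/2} \lesssim \bigl(\fint_{T(C\Delta)} |\nabla u|^q\bigr)^{1/q}$ for some $q < 2$. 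The parabolic Gehring lemma applies equally well with the underlying doubling measure coming from the ADR property of $\partial\Omega$ and the ``fat'' Carleson regions inside the $1$-sided CAD $\mathcal{O}$, producing the desired improvement to some $p > 2$.

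The main technical obstacle is the Sobolev-Poincar\'e step: in the parabolic setting one cannot directly invoke the elliptic Sobolev-Poincar\'e inequality on a parabolic cube, because the natural scaling is anisotropic and the time derivative of $u$ is not a priori controlled. The standard remedy (and the one I would use) is to exploit that $u$ solves $Lu=0$, so $\partial_t u = \operatorname{div}(A\nabla u)$, and obtain the necessary interpolation via a parabolic fractional integration argument (Gagliardo-Nirenberg on parabolic cubes), as carried out in Giaquinta-Struwe or Kinnunen-Lewis. Once that inequality is in hand, the remainder is a routine application of Gehring's lemma with the parabolic Calder\'on-Zygmund decomposition, which goes through unchanged in both the interior and boundary cases.
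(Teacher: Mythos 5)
The paper states this proposition among its list of known parabolic background facts with neither a proof nor a citation, so there is no paper argument to compare against; your sketch is a plausible reconstruction of the standard Meyers/Gehring proof, and the interior half is essentially right. You correctly flag the one delicate step: a parabolic Sobolev--Poincar\'e of the form $(\fint_{2Q}|u-c|^2)^{1/2}\lesssim r\,(\fint_{4Q}|\nabla u|^q)^{1/q}$ with $q<2$ does \emph{not} follow by slicing the elliptic inequality in time (take $u=g(t)$ to see it fail for general $W^{1,q}$ functions); one must use $\partial_t u=\div(A\nabla u)$ to control the oscillation of the slicewise spatial averages, as in Giaquinta--Struwe, and you say so.

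The boundary half, as written, has a genuine gap. The boundary Poincar\'e you invoke (\refproposition{prop:Boundary Poincare}) is an $L^2$-to-$L^2$ statement, $\int_{T(\Delta)}|u/\delta|^2\lesssim\int_{T(5\Delta)}|\nabla u|^2$, with no exponent gain. Chaining it after the boundary Caccioppoli, as your displayed inequalities indicate, yields only $\int_{T(\Delta)}|\nabla u|^2\lesssim r^{-2}\int_{T(2\Delta)}|u|^2\lesssim\int_{T(2\Delta)}|u/\delta|^2\lesssim\int_{T(10\Delta)}|\nabla u|^2$, which is trivially true and gives Gehring's lemma nothing to improve. Appending ``the standard Sobolev embedding'' afterward cannot create the gain: the embedding must be built into the Poincar\'e step itself, i.e.\ you need the strengthening $\int_{T(\Delta)}|u/\delta|^2\lesssim |T(\Delta)|\,(\fint_{T(C\Delta)}|\nabla u|^q)^{2/q}$ for some $q<2$ before you ever pass to $|\nabla u|^2$. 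That refinement again requires the equation for the time oscillation together with a trace-zero Sobolev inequality on $\mathcal O\cap B$ (available because $\mathcal O$ is a $1$-sided CAD, hence interior corkscrew and ADR, giving the needed measure-density). As your sketch stands, this is the crux of the boundary estimate and it is asserted rather than argued.
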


From this we have the following:
\begin{lemma}\label{lemma:Gradient L2toL1}
    Let \( u = 0 \) on \( 4\Delta \) for some boundary ball $\Delta$.
    Then
    \begin{align}\label{eq:Gradient_L2toL1}
        \Big( \fint_{T(\Delta)} |\nabla u|^2 \Big)^{1/2} 
        \lesssim  \fint_{T(2\Delta)} |\nabla u|  .
    \end{align}
\end{lemma}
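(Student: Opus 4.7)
The plan is to combine \refproposition{prop:Gradient_LptoL2} (an $L^2\!\to\!L^p$ reverse H\"older for some $p>2$) with the elementary log-convexity of $L^q$-norms. Writing $f=|\nabla u|$ and $\theta=\frac{p}{2(p-1)}\in(0,1)$, H\"older's inequality on the probability space $(T(\Delta),|T(\Delta)|^{-1}dX\,dt)$ yields
\[
\Big(\fint_{T(\Delta)} f^2\Big)^{1/2}\le \Big(\fint_{T(\Delta)} f\Big)^{1-\theta}\Big(\fint_{T(\Delta)} f^p\Big)^{\theta/p}.
\]
Invoking \refproposition{prop:Gradient_LptoL2} on the last factor (permissible since $u=0$ on $4\Delta$) and using the doubling bound $\fint_{T(\Delta)}f\lesssim\fint_{T(2\Delta)}f$ (a consequence of the ADR property of $\pom$), one obtains
\[
\Big(\fint_{T(\Delta)} f^2\Big)^{1/2}\le C\Big(\fint_{T(2\Delta)} f\Big)^{1-\theta}\Big(\fint_{T(2\Delta)} f^2\Big)^{\theta/2}.
\]
Young's inequality with the conjugate pair $(\tfrac{1}{1-\theta},\tfrac{1}{\theta})$ then gives, for any $\varepsilon>0$,
\[
\Big(\fint_{T(\Delta)} f^2\Big)^{1/2}\le \varepsilon\Big(\fint_{T(2\Delta)} f^2\Big)^{1/2}+C_\varepsilon\fint_{T(2\Delta)} f.
\]

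It remains to absorb the first term on the right. The same chain of estimates applies verbatim on every boundary sub-ball $\Delta'$ with $4\Delta'\subset 4\Delta$ and on every interior parabolic cube $Q$ with $2Q\subset\Omega$; in both cases the hypotheses of \refproposition{prop:Gradient_LptoL2} are met. Cover $T(2\Delta)$ by a bounded-overlap family of such sub-Carleson regions whose $2$-enlargements remain inside $T(2\Delta)$, sum the resulting local versions of the previous display, and run a standard Giaquinta--Bojarski hole-filling iteration with $\varepsilon$ small enough to make the geometric series converge. This eliminates the $(\fint_{T(2\Delta)}f^2)^{1/2}$ term and produces the claimed \eqref{eq:Gradient_L2toL1}.

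The principal obstacle is precisely this absorption step. Outward iteration --- the usual device in the elliptic Kenig--Pipher argument --- is unavailable here because we only know $u=0$ on $4\Delta$, so \refproposition{prop:Gradient_LptoL2} can never be invoked on a ball that enlarges the vanishing set. Consequently the covering has to be organized inward: near the boundary one must use boundary sub-balls of radius small enough to respect $4\Delta'\subset 4\Delta$, while in the interior one uses Whitney cubes with $2Q\subset\Omega$; the bookkeeping must ensure that these two families glue together with bounded overlap inside $T(2\Delta)$, so that the $\varepsilon^k$-geometric sums generated by the hole-filling converge and the $L^2$-terms are absorbed uniformly in the scale $r(\Delta)$.
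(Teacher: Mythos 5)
Your interpolation, reverse H\"older, and Young steps are correct and match the first half of the self-improvement argument the paper refers to via \cite{She19}. The gap is in the absorption. After Young, your inequality
\[
\Big(\fint_{T(\Delta)} |\nabla u|^2\Big)^{1/2}\le \varepsilon\Big(\fint_{T(2\Delta)} |\nabla u|^2\Big)^{1/2}+C_\varepsilon\fint_{T(2\Delta)} |\nabla u|
\]
has the defective structure that the $L^2$-term to be absorbed sits on the strictly larger set $T(2\Delta)$. The covering you propose does not repair this: if the cover of $T(2\Delta)$ uses regions whose doubles stay inside $T(2\Delta)$, those regions must shrink to zero scale near the spherical cap $\partial B_{2r}\cap\Omega$, and for such small $R_i$ the local averages $\fint_{2R_i}|\nabla u|$ are not controlled by $\fint_{T(2\Delta)}|\nabla u|$, so the summed $L^1$-contributions cannot be bounded; whereas $O(1)$ regions of scale $\sim r$ have doubles sticking out of $T(2\Delta)$. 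A simple $\varepsilon^k$-geometric series does not help either, since outward iteration is ruled out by the vanishing constraint (as you note) and inward iteration terminates after finitely many steps with the unabsorbed $L^2$-term still present.

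The correct closure is the Giaquinta--Modica iteration against a \emph{one-parameter family} of estimates: for all $r\le t<s\le 2r$,
\[
\fint_{T(\Delta_t)}|\nabla u|^2 \le \theta\,\fint_{T(\Delta_s)}|\nabla u|^2 + \frac{C}{\big((s-t)/r\big)^\gamma}\Big(\fint_{T(\Delta_{2r})}|\nabla u|\Big)^2,
\]
with $\theta<1$ absolute and --- this is the crucial point --- the $L^1$-average over the \emph{fixed} outer set $T(\Delta_{2r})$. To get this, cover $T(\Delta_t)$ (not $T(2\Delta)$) by sub-Carleson regions and interior parabolic cubes $R_i$ of scale $\sim s-t$ whose doubles lie in $T(\Delta_s)$, sum the local Young inequalities, and control the resulting $L^1$-sum via the crude $\ell^1\hookrightarrow\ell^2$ bound $\sum_i\big(\int_{2R_i}|\nabla u|\big)^2\le\big(\sum_i\int_{2R_i}|\nabla u|\big)^2\lesssim\big(\int_{T(\Delta_{2r})}|\nabla u|\big)^2$; the price, a negative power of $(s-t)/r$, is exactly what the Giaquinta lemma tolerates once $\theta$ is taken small. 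Your proposal contains all the pieces but omits the varying intermediate radii and the $\ell^1\hookrightarrow\ell^2$ step that lands the $L^1$-term on the fixed outer region, and without those two ingredients the hole-filling does not close.
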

This follows from \refproposition{prop:Gradient_LptoL2} by a standard real-variable argument (c.f. 
\cite{She19}, for example). 

\begin{proposition}[Harnack inequality, Lemma 3.5 of Chapter I in \cite{HL01}]\label{prop:HarnackInequality}
    Let \((Z,\tau),(Y,s)\in Q(X,t,r)\) and \(u\) a non-negative weak solution on \(Q(X,t,2r)\). 
    Then for \(\tau<s\)
    \[u(Z,\tau)\leq u(Y,s)\exp\Big[c\Big(\frac{|Y-Z|^2}{|s-\tau|}+1\Big)\Big].\]
\end{proposition}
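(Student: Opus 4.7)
The plan is to follow Moser's classical approach to the parabolic Harnack inequality, adapted to a divergence-form operator $L = \Div(A\nabla\cdot) - \partial_t$ with bounded measurable coefficients satisfying the uniform ellipticity hypothesis. The argument splits naturally into a local boundedness estimate for subsolutions and a weak Harnack estimate for supersolutions, which are then combined and iterated along a forward-in-time chain of small parabolic cubes to produce the exponential factor in the statement.

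For local boundedness I would test the equation against $u^{q-1}\eta^2$ for $q \geq 1$ with a space-time cutoff $\eta$, and iterate the resulting Caccioppoli-type inequality on shrinking cubes (the scheme underlying \refproposition{prop:Cacciopolli}) to obtain
\[
\sup_{Q(X,t,r/2)} u \;\lesssim\; \Bigl( \fint_{Q(X,t,r)} u^p \Bigr)^{1/p}
\]
for any $p>0$. For the weak Harnack direction, one tests against $u^{-q}\eta^2$ with $q>0$ and reverse-iterates to produce a lower bound on $\inf u$ by a negative-exponent average. The bridge is to show that $\log u$ lies in a parabolic BMO class, and then to apply a parabolic John--Nirenberg / Bombieri--Giusti type lemma to promote BMO control of $\log u$ to an $L^{p_0}$ bound for some small $p_0 > 0$. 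Time asymmetry must be respected throughout: the sup is taken on a time-earlier sub-cube and the inf on a strictly time-later one, consistent with the ordering $\tau<s$ in the statement, because parabolic evolution propagates only forward.

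The output of the previous step is a ``single-scale'' Harnack: $u(Z,\tau)\le C\, u(Y,s)$ with $C$ uniform whenever $(Z,\tau)$ and $(Y,s)$ lie in paired sub-cubes of a parabolic cube of radius $\sim (s-\tau)^{1/2}$ and are properly separated in time. To reach arbitrary $(Y,s),(Z,\tau)\in Q(X,t,r)$ I would then chain: connect them by a discrete space-time path along which each successive pair of points satisfies the single-scale hypothesis, each step advancing in time. Parabolic scaling forces the spatial step between consecutive time slices to be at most of order $((s-\tau)/N)^{1/2}$ for an $N$-step chain, so $N \sim 1 + |Y-Z|^2/|s-\tau|$ steps suffice. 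Each step multiplies the constant by a fixed factor, and the resulting product telescopes to $\exp\bigl[c(|Y-Z|^2/|s-\tau| + 1)\bigr]$.

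The main obstacle is the weak Harnack step, and specifically the passage from the logarithmic parabolic BMO estimate to the $L^{p_0}$ bound. This parabolic John--Nirenberg argument is genuinely delicate: the Calder\'on--Zygmund stopping-time decomposition must be carried out on cylinders with the anisotropic scaling $|x|\sim |t|^{1/2}$, and the good-set/bad-set dichotomy must be compatible with the fact that super-level sets of $\log u$ effectively propagate forward in time. Once this technical point is handled, the remaining steps (local boundedness by Moser iteration and chaining) are standard.
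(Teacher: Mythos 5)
The paper does not prove \refproposition{prop:HarnackInequality}; it cites it verbatim as Lemma 3.5 of Chapter I in \cite{HL01}, so there is no in-paper proof to compare against. Your sketch reproduces the standard Moser iteration route (local boundedness for subsolutions, weak Harnack for supersolutions via negative powers, the parabolic $\log u$-in-$\BMO$ bridge closed by an abstract John--Nirenberg/Bombieri--Giusti lemma, then chaining with $N\sim 1+|Y-Z|^2/|s-\tau|$ steps to produce the exponential constant), and that is exactly the argument underlying the cited lemma; the outline is correct, including the forward-in-time asymmetry and the parabolic scaling count in the chaining step. The one caveat worth making explicit, which you already flag, is that the passage from $\log u\in \BMO$ to an $L^{p_0}$ bound in the parabolic setting cannot be a verbatim elliptic John--Nirenberg argument because the sup and inf cylinders are disjoint in time; Moser's abstract lemma (or Bombieri--Giusti's variant) is genuinely needed there, and a complete write-up would have to supply it rather than treat it as a black-box generalization.
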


\begin{proposition}[Boundary Hölder continuity \cite{GH,DU}]\label{prop:BoundaryHölder}
    Let \(u\) be a solution on \(T(\Delta(x,t,2r))\) that vanishes on continuously on \(\Delta(x,t,2r)\). 
    Then there exists \(\alpha>0\) such that for \((Y,s)\in T(\Delta(x,t,r/2))\)
    \[u(Y,s)\lesssim c\left(\frac{\delta(Y,s)}{r}\right)^\alpha \max_{T(\Delta(x,t,r))}|u|.\]
    If additionally \(u\geq 0\) in \(T(\Delta(x,t,2r))\), then
    \begin{align}\label{eq:BoundaryHoelderNonnegativeCorkscrewEstimate}
        u(Y,s)\lesssim \left(\frac{\delta(Y,s)}{r}\right)^\alpha u(V^+(\Delta(x,t,2r))).
    \end{align}
\end{proposition}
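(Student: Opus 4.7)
\medskip

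The plan is to obtain the first estimate by a standard oscillation decay argument at the boundary, and then bootstrap to the second estimate using the forward-in-time parabolic Harnack inequality.

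For the first bound, I would fix a boundary ball $\Delta = \Delta(x,t,2r)$ on which $u$ vanishes and set $M(\rho) := \max_{T(\Delta(x,t,\rho))} |u|$ for $0 < \rho \le r$. The goal is to prove an oscillation-decay estimate of the form $M(\rho/2) \le \theta \, M(\rho)$ for some fixed $\theta \in (0,1)$, from which iteration (with $\alpha := -\log_2 \theta$) immediately produces the claimed Hölder bound at a boundary corkscrew approach. The key input is the boundary Caccioppoli inequality (\refproposition{prop:Boundary Cacciopolli}), which, applied to the nonnegative subsolutions $(M(\rho) \pm u)^+$ (each of which still vanishes on the boundary patch), controls the $L^2$ energy of the gradient in $T(\Delta(x,t,\rho/2))$ by $r^{-2} \int_{T(\Delta(x,t,\rho))} |u|^2$. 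Combining this with the parabolic Sobolev/Poincaré inequality in Caccioppoli cylinders and Moser's $L^p$-$L^\infty$ iteration (or, equivalently, the weak Harnack inequality of De Giorgi--Moser type that is available on CAD base domains since, uniformly in time, the base satisfies an interior corkscrew/Ahlfors--David condition) yields the strict improvement $M(\rho/2) \le \theta \, M(\rho)$. Iterating across dyadic scales and using the trivial pointwise bound by $M$ at scale $\delta(Y,s)$ gives
\[
u(Y,s) \lesssim \Bigl(\tfrac{\delta(Y,s)}{r}\Bigr)^\alpha \max_{T(\Delta(x,t,r))} |u|,
\]
as claimed.

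For the second, stronger bound under the nonnegativity assumption $u \ge 0$, the task reduces to replacing the factor $\max_{T(\Delta(x,t,r))} u$ by the single value $u(V^+(\Delta(x,t,2r)))$. Here I would exploit that the forward corkscrew point $V^+(\Delta(x,t,2r)) = (A_{2r}(P), t + 400 r^2)$ sits at a time strictly later than every point in $T(\Delta(x,t,r))$, at a spatial corkscrew distance $\sim r$ from the boundary. Because our PDE is forward parabolic and $u \ge 0$, the parabolic Harnack inequality (\refproposition{prop:HarnackInequality}) can be iterated along a Harnack chain of controlled length connecting any point of $T(\Delta(x,t,r))$ (with $\delta \gtrsim r$, which can always be arranged by first applying the Hölder estimate from the first part to reduce to such interior points) to $V^+$; the length of this chain is bounded in terms of the CAD constants of $\mathcal{O}$. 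This gives $\max_{\{(Z,\tau) \in T(\Delta(x,t,r)) : \delta(Z,\tau) \gtrsim r\}} u \lesssim u(V^+(\Delta(x,t,2r)))$, and then combining with the already-established Hölder estimate (which takes care of points with $\delta(Z,\tau) \ll r$, replacing the maximum over the full Carleson region by the maximum over its interior portion at the cost of a $(\delta/r)^\alpha$ factor that has already been absorbed) produces the desired estimate \eqref{eq:BoundaryHoelderNonnegativeCorkscrewEstimate}.

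The main technical obstacle is verifying the oscillation decay on a $1$-sided CAD base without a priori smoothness of the boundary: one must be sure that the Poincaré/Sobolev inequalities used in the Moser iteration are valid in the Carleson regions $T(\Delta)$. This is handled by the interior corkscrew condition plus Ahlfors--David regularity of $\partial\mathcal{O}$, which together give the measure-density condition needed for the De Giorgi oscillation lemma in the parabolic setting; both are established in the references \cite{GH,DU} cited in the statement. Once this parabolic boundary De Giorgi step is in place, the rest of the argument is routine scaling, iteration, and Harnack chaining.
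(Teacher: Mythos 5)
The paper does not actually prove this proposition; it is imported from \cite{GH,DU} with no argument supplied, so there is no ``paper's own proof'' to compare against. Evaluating your sketch on its own merits, the overall architecture (boundary oscillation decay at dyadic scales for the first inequality, then a forward-in-time Harnack chain to the corkscrew point $V^+(\Delta(x,t,2r))$ for the second) is the standard way such an estimate is established, and the second half of your argument is in the right direction, modulo the usual bookkeeping needed to pass from $\max_{T(\Delta_r)}u$ to $u(V^+)$ rather than merely from the interior portion of that Carleson region.

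However, there are two genuine gaps in the oscillation-decay step. First, the functions $(M(\rho)\pm u)^+$ do \emph{not} vanish on the boundary patch: on $\Delta(x,t,2r)$ one has $u=0$, so $(M(\rho)\pm u)^+\equiv M(\rho)>0$ there. The boundary Caccioppoli inequality (\refproposition{prop:Boundary Cacciopolli}) is stated for solutions that vanish on $\Delta$, so it cannot be applied to $M(\rho)\pm u$ as you propose; you would need either the interior Caccioppoli inequality on interior Whitney cubes, or a different test-function argument that respects the nonzero boundary values. Second, your closing claim that interior corkscrew plus Ahlfors--David regularity supply ``the measure-density condition needed for the De Giorgi oscillation lemma'' is false in the $1$-sided CAD setting: a $1$-sided chord arc domain need not satisfy any exterior corkscrew or exterior measure-density condition (the complement $\R^n\setminus\Base$ may have Lebesgue measure zero). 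What ADR does give is a lower bound on the $2$-capacity of $\partial\Base\cap B_\rho$, and it is this \emph{capacity} density condition (a Wiener-type criterion) that drives the De Giorgi/Moser boundary oscillation decay on $1$-sided CAD domains; this is precisely the route taken in the cited references. As written, the justification of $M(\rho/2)\le\theta M(\rho)$ rests on an ingredient that is unavailable here and must be replaced by the capacity-based argument before the iteration can be carried out.
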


By \cite{DU} we also have the existence of a Greens function for the operator \( L \) and its adjoint \( L^* \) with 
\[ G^*(X, t, Y, s) = G(Y, s,X, t). \]
    Moreover, let \( \omega(X,t,\Delta) \) denotes 
    the parabolic measure of \( \Delta \subset \partial\Omega \) with the pole at \( (X,t) \in \Omega \)
    (and \( \omega^* \) denotes the parabolic measure of $L^*$).
    We have the following comparison result
\begin{proposition}[\cite{GHMN}]\label{prop:Grean To Meas}
    Let \( (X,t), (Y,s) \in \Omega \) with \( s+8r^2 \leq t \), where \( r := \delta(Y, s) \). Let $Y^*$ be a point on $\partial\mathcal O$ such that $Y\in\tilde{\gamma}(Y^*)$.
Then 
\[ r^nG(X, t, Y, s) \sim \omega(X, t, \Delta(Y^*, s, r)) \]
Similarly, if \(  s-8r^2 \geq t \), then
\[ r^nG^*(X, t, Y, s) \sim \omega^*(X, t, \Delta(Y^*, s, r)) \]
\end{proposition}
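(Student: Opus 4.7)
My plan is to adapt the classical Caffarelli--Fabes--Mortola--Salsa (CFMS) comparison between Green's function and harmonic measure to the forward parabolic setting. View $u_1(X,t) := r^n G(X,t,Y,s)$ and $u_2(X,t) := \omega(X,t,\Delta(Y^*,s,r))$ as non-negative solutions of $Lu = 0$ in $\Omega_+ := \Omega \cap \{t > s + 4r^2\}$. Both vanish on the lateral portion of $\partial \Omega_+$: $u_1$ by definition of the Green's function, and $u_2$ because its boundary data $\chi_\Delta$ is supported at the time slice $\{t = s\}$, which is strictly before $\Omega_+$. The evaluation point $(X,t)$ with $t - s \geq 8r^2$ lies in $\Omega_+$, so the assertion reduces to a two-sided comparison of $u_1$ and $u_2$ on $\Omega_+$.

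The core of the proof is to verify $u_1(X_0,t_0) \sim u_2(X_0,t_0)$ at a single reference point, which I take to be $(X_0,t_0) := V^+(\Delta(Y^*,s,r)) = (A_r(Y^*), s + 100 r^2)$. Four pointwise estimates are required. The upper bound $u_1(X_0,t_0) \lesssim 1$ uses parabolic Cacciopoli (\refproposition{prop:Cacciopolli}) together with a Moser-type $L^\infty$-to-$L^2$ bound applied to $G(\cdot,\cdot,Y,s)$ on a parabolic cube of size $\sim r$ centred at $(X_0,t_0)$, which is disjoint from a neighbourhood of the pole. The lower bound $u_2(X_0,t_0) \gtrsim 1$ is a non-degeneracy statement for parabolic measure, obtained by applying \refproposition{prop:BoundaryHölder} to the non-negative solution $1 - \omega(\cdot,\cdot,\Delta)$ near the boundary portion $\Delta$. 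The lower bound $u_1(X_0,t_0) \gtrsim 1$ uses the adjoint symmetry $G^*(X_0,t_0,Y,s) = G(Y,s,X_0,t_0)$ combined with a Harnack chain (\refproposition{prop:HarnackInequality}) connecting $(Y,s)$ forward in time to a point at parabolic distance $\sim r$ from $(X_0,t_0)$, noting that $(Y,s)$ is itself a corkscrew-type point for $\Delta$. Finally, $u_2(X_0,t_0) \lesssim 1$ is trivial since $\omega \leq 1$.

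The comparison at the single reference point upgrades to all of $\Omega_+$ via a parabolic boundary Harnack principle for positive $L$-solutions vanishing on $\partial\mathcal{O} \times \R$; this principle itself follows from \refproposition{prop:BoundaryHölder} and the $1$-sided CAD geometry, and yields that $u_1/u_2$ is bounded above and below uniformly on $\Omega_+ \setminus C_{4r}(Y,s)$. The main obstacle I anticipate is precisely the construction of the Harnack chains, both in step (c) above and in establishing the boundary Harnack principle: the parabolic Harnack factor $\exp[c(|Y-Z|^2/|s-\tau|+1)]$ degenerates as $|s-\tau|\to 0$, so chains must be built going forward in time with each time-step comparable to the squared spatial step. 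The $1$-sided CAD structure (interior corkscrews and Harnack chains together with ADR boundary) is exactly what makes this construction uniformly possible. The adjoint statement involving $G^*$ and $\omega^*$ follows by the symmetric argument with time reversed and $L$ replaced by $L^*$.
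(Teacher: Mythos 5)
The paper does not contain its own proof of this proposition; it is cited directly from \cite{GHMN}, a work in progress, so there is no internal argument to compare against. Your CFMS-style template (compare $r^nG(\cdot,\cdot,Y,s)$ and $\omega(\cdot,\cdot,\Delta)$ at a forward corkscrew reference point, then upgrade via a boundary Harnack/comparison principle) is the standard route to such a Green's function--caloric measure equivalence, and is the right approach.

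That said, two statements in your sketch are imprecise enough that they should be corrected. First, the boundary datum $\chi_\Delta$ of $u_2$ is supported on $\Delta=\partial\Omega\cap B_r(Y^*,s)\subset\partial\mathcal{O}\times(s-r^2,s+r^2)$, a subset of the \emph{lateral} boundary, not on the initial time slice $\{t=s\}$; the conclusion that $u_2$ vanishes on the lateral boundary of $\Omega_+=\Omega\cap\{t>s+4r^2\}$ still holds, but the reason is that the temporal extent of $\Delta$ lies strictly below $s+r^2$. Second, the adjoint identity as you wrote it, $G^*(X_0,t_0,Y,s)=G(Y,s,X_0,t_0)$, has the arguments in the wrong places: since $s<t_0$, the right-hand side is the forward Green's function evaluated before its pole and thus vanishes identically. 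The relation you need is $G(X_0,t_0,Y,s)=G^*(Y,s,X_0,t_0)$, to be combined either with the backward-in-time Harnack inequality for $L^*$ applied to $G^*(\cdot,\cdot,X_0,t_0)$, or, more directly, with forward Harnack (\refproposition{prop:HarnackInequality}) applied to $G(\cdot,\cdot,Y,s)$ propagating the near-pole lower bound up to $(X_0,t_0)$. Finally, you should be explicit that the ``boundary Harnack'' you invoke in the last step is not a direct consequence of boundary H\"older continuity; it is precisely the local comparison principle (\refproposition{prop:Comparison_Principle}) and requires a chaining argument over Carleson regions, together with a Harnack step to reconcile the $V^+/V^-$ asymmetry in that proposition, to reach all of $\Omega_+$ away from a neighborhood of the pole.
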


As a consequence, for \( d\geq 10r \) we have
\begin{align}\label{eq:GreenToMeas}
    r^n G(V^-(\Delta_r),V^-(\Delta_d))  = r^n G^*(V^-(\Delta_d),V^-(\Delta_r)) \sim \omega^{*V^-(\Delta_d)}(\Delta_r). 
\end{align}

Furthermore, we have the following decay property for the Green's function:

\begin{lemma}\label{lemma:ParabolicGrwothEstimateForGwithalpha}
    There exists \(\alpha=\alpha(\lambda, n)\) such that for \((X,t),(Y,s)\in\Omega\) with \(t<s\)
    \begin{align}G(X,t,Y,s)\lesssim \delta(X,t)^\alpha \big(|X-Y|^2+(s-t)\big)^{-\frac{n+\alpha}{2}}.\label{eq:ParabolicGrwothEstimateForGwithalpha}\end{align}
\end{lemma}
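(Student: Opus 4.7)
The plan is to combine an Aronson-type Gaussian upper bound for the Green's function (which gives polynomial decay in the parabolic distance to the pole) with the boundary H\"older estimate from Proposition~\ref{prop:BoundaryHölder} to produce the extra $\delta(X,t)^\alpha$ factor. Set $d^2:=|X-Y|^2+(s-t)$ so that the desired estimate reads $G(X,t,Y,s)\lesssim \delta(X,t)^\alpha d^{-n-\alpha}$. Note that $(Z,\sigma)\mapsto G(Z,\sigma,Y,s)$ is a nonnegative solution of $L^*$ on $\Omega\setminus\{(Y,s)\}$ vanishing on $\partial\Omega$.

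First, by domain monotonicity $G(Z,\sigma,Y,s)$ is dominated by the fundamental solution of $L^*$ on $\R^n\times\R$, for which Aronson's classical estimate gives
\[G(Z,\sigma,Y,s)\lesssim (s-\sigma)^{-n/2}\exp\!\bigl(-c|Z-Y|^2/(s-\sigma)\bigr)\quad\text{for }\sigma<s.\]
Using the exponential to absorb polynomial factors when $|Z-Y|^2>s-\sigma$, this converts to the purely polynomial bound
\begin{equation}\label{E:Gstar}
G(Z,\sigma,Y,s)\lesssim \bigl(|Z-Y|^2+(s-\sigma)\bigr)^{-n/2}.
\end{equation}

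If $\delta(X,t)\ge d/c_0$ for a universal constant $c_0$ to be chosen, \eqref{E:Gstar} at $(X,t)$ alone suffices, since $d^{-n}\le c_0^\alpha\,\delta(X,t)^\alpha d^{-n-\alpha}$. Suppose instead $\delta(X,t)<d/c_0$, pick $P\in\partial\mathcal O$ realizing $|X-P|=\delta(X,t)$, and set $r:=d/(4c_0)$. Two geometric facts, verified by splitting into the subcases $|X-Y|\ge d/2$ (which forces $|P-Y|\ge d/2-\delta(X,t)\gtrsim d$) and $|X-Y|<d/2$ (which forces $s-t\ge 3d^2/4$), are that $(X,t)\in T(\Delta(P,t,r/2))$ and $\|(Y,s)-(P,t)\|\gtrsim d\gg 2r$, so that $(Y,s)\notin T(\Delta(P,t,2r))$. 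Applied to $u=G(\cdot,\cdot,Y,s)$, a nonnegative solution of $L^*$ on $T(\Delta(P,t,2r))$ vanishing on $\Delta(P,t,2r)$, Proposition~\ref{prop:BoundaryHölder} (in its evident form for $L^*$, obtained by reversing time) gives
\[G(X,t,Y,s)\lesssim \Bigl(\frac{\delta(X,t)}{r}\Bigr)^\alpha\max_{T(\Delta(P,t,r))}G(\cdot,\cdot,Y,s).\]
For any $(Z,\sigma)\in T(\Delta(P,t,r))$, the triangle inequality combined with the separation above yields $|Z-Y|^2+(s-\sigma)\gtrsim d^2$ (treating $G=0$ when $\sigma\ge s$), so \eqref{E:Gstar} bounds the maximum by $Cd^{-n}$. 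Combining, $G(X,t,Y,s)\lesssim (\delta(X,t)/d)^\alpha d^{-n}=\delta(X,t)^\alpha d^{-n-\alpha}$, as claimed.

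The delicate point is the uniform separation $\|(Y,s)-(P,t)\|\gtrsim d$: this is where the parabolic geometry really enters, since $d$ blends spatial and time contributions that must be balanced separately in the two subcases above. A secondary technical point is that Proposition~\ref{prop:BoundaryHölder} is stated for $L$, but the identical argument applies to $L^*$ after time reversal, so no new work is required there.
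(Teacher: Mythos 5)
Since the paper does not include its own proof of this lemma (it is attributed to Ulmer's thesis and private communications), there is nothing to compare against directly; your approach---deriving a global polynomial bound from Aronson's Gaussian estimate via domain monotonicity, then using the boundary H\"older estimate (\refproposition{prop:BoundaryHölder}) to extract the $\delta(X,t)^\alpha$ factor---is the natural and standard route, and the structure of the argument is sound. Your handling of the $L$ versus $L^*$ issue by time reversal is also correct.

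There is, however, a genuine arithmetic gap in the choice of $r$. You set $r:=d/(4c_0)$ under the hypothesis $\delta(X,t)<d/c_0$, but then $\delta(X,t)<d/c_0=4r$, which is far from the condition $\delta(X,t)<r/2$ needed for $(X,t)\in T(\Delta(P,t,r/2))$. As written, the boundary H\"older estimate cannot be applied to $(X,t)$. The fix is easy but does require changing your constants: take, say, $r:=d/16$ and $c_0:=32$. Then $\delta(X,t)<d/32=r/2$, so $(X,t)\in T(\Delta(P,t,r/2))$. In the first subcase ($|X-Y|\ge d/2$) you get $\|(Y,s)-(P,t)\|\ge |Y-P|\ge d/2-\delta(X,t)\ge d/2-d/32=15d/32=7.5r>2r$, and in the second subcase ($|X-Y|<d/2$) you get $\|(Y,s)-(P,t)\|\ge(s-t)^{1/2}\ge\sqrt{3}\,d/2=8\sqrt{3}\,r>2r$, so $(Y,s)\notin T(\Delta(P,t,2r))$ in both cases, and moreover for $(Z,\sigma)\in T(\Delta(P,t,r))$ one has $\|(Z,\sigma)-(Y,s)\|\ge\|(Y,s)-(P,t)\|-r\gtrsim d$, so the maximum is controlled by $d^{-n}$. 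With these corrected constants the argument goes through and yields $G(X,t,Y,s)\lesssim(\delta(X,t)/r)^\alpha\,d^{-n}\lesssim\delta(X,t)^\alpha\,d^{-n-\alpha}$, as claimed. Everything else in your write-up is fine.
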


The proof stems from private communications with  Dindo\v{s},  Li, and Pipher and can be found as Lemma 2.8.17 in Ulmer thesis (\cite{U}).

\begin{proposition}[Local comparison Principle \cite{DU}]\label{prop:Comparison_Principle}
    Let \( u \) and \( v \) be non-negative solutions to \( Lu = 0 \) in \( T(\Delta_{2r}(Y,s)) \) 
        with \( u=v=0 \) on \( \Delta_{2r}(Y,s) \)
    \begin{align*}
        \frac{u(X,t)}{v(X,t)} \approx \frac{u(V^+(\Delta_r(Y,s)))}{v(V^-(\Delta_r(Y,s)))}, 
        \quad (X,t) \in T(\Delta_{r}(Y,s)), 
    \end{align*} 
and $V^+$ ($V^-$) are time forward (backward)    corkscrew point relative to the surface ball.    
\end{proposition}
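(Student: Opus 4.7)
My plan is to establish a \emph{parabolic boundary Harnack principle} of the form: for any non-negative solution $w$ of $Lw=0$ in $T(\Delta_{2r}(Y,s))$ vanishing on $\Delta_{2r}(Y,s)$, one has
\begin{equation*}
w(X,t) \;\approx\; w(V^\pm(\Delta_r(Y,s)))\cdot h(X,t),\qquad (X,t)\in T(\Delta_r(Y,s)),
\end{equation*}
with $V^+$ giving an upper comparability and $V^-$ a lower comparability, where $h$ is a common auxiliary comparison function that does not depend on $w$. Once both $u$ and $v$ are compared to the same $h$, dividing yields the proposition; swapping the roles of $u$ and $v$ gives the reverse inequality.

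I would choose as the comparison function $h(X,t):=r^n G^*(V^+(\Delta_{Cr}),X,t)$ for a large constant $C\gg 1$, so that the pole of $G^*$ is situated well forward in time from every point of $T(\Delta_r)$. By \refproposition{prop:Grean To Meas} together with \eqref{eq:GreenToMeas} (applied with the time-separation hypothesis $s+8r^2\le t$ verified since $V^+(\Delta_{Cr})$ is at time $\approx s+100C^2r^2$), $h$ is comparable to the $L^*$-parabolic measure of small boundary balls around $(X,t)$, and \refproposition{prop:HarnackInequality} (applied to $G^*(V^+(\Delta_{Cr}),\cdot,\cdot)$ as a solution of $L$ in the region where its pole is not present) yields $h(V^+(\Delta_r))\approx h(V^-(\Delta_r))$.

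For the upper bound $u(X,t)\lesssim u(V^+(\Delta_r))\cdot h(X,t)/h(V^+(\Delta_r))$, I would invoke \refproposition{prop:BoundaryHölder} (more precisely \eqref{eq:BoundaryHoelderNonnegativeCorkscrewEstimate}) to get the pointwise decay $u(X,t)\lesssim (\delta(X,t)/r)^\alpha u(V^+(\Delta_{2r}))$, and then compare the right side to $u(V^+(\Delta_r))\cdot h(X,t)/h(V^+(\Delta_r))$ using the matching lower bound $h(X,t)\gtrsim (\delta(X,t)/r)^\alpha h(V^+(\Delta_r))$, which follows by combining \refproposition{prop:Grean To Meas} with interior Harnack chaining (recalling that $V^+(\Delta_{Cr})$ is far forward in time from the cylinder). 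For the lower bound on $v$, I would compare $v$ with $\tilde h(X,t):=c\,v(V^-(\Delta_r))\cdot h(X,t)/h(V^-(\Delta_r))$ via a maximum-principle argument on $T(\Delta_{2r})$: both functions vanish on $\Delta_{2r}$ and solve $Lw=0$ there; on the non-lateral portion of $\partial T(\Delta_{2r})$ lying inside $\Omega$, $v$ is bounded below by $c\, v(V^-(\Delta_r))$ using forward-in-time Harnack chains (\refproposition{prop:HarnackInequality}) starting from $V^-(\Delta_r)$, while $\tilde h$ is bounded above there by its known comparability to parabolic measure from \refproposition{prop:Grean To Meas}; choosing $c$ small enough and invoking the parabolic maximum principle yields $v\ge \tilde h$ in $T(\Delta_r)$.

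The main obstacle is the lower bound on $v$: the asymmetry of the parabolic Harnack principle (only forward-in-time chaining is allowed) forces one to use $V^-(\Delta_r)$ as the reference point whose value we propagate forward to the non-lateral boundary of $T(\Delta_{2r})$, and one must verify that this propagation is strong enough to dominate $\tilde h$ on that boundary after adjusting $c$. Correctly setting up the auxiliary region to which the maximum principle is applied -- so that the Harnack chain from $V^-(\Delta_r)$ reaches every relevant boundary point with a uniformly bounded number of steps, and so that the Green's-function decay \refproposition{lemma:ParabolicGrwothEstimateForGwithalpha} keeps $\tilde h$ small on the far-in-time faces -- is the delicate part. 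The final corkscrew asymmetry $u(V^+)/v(V^-)$ in the statement is then a direct reflection of this time-directional structure.
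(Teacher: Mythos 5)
The paper itself does not prove this proposition; it is cited from \cite{DU}, so there is no internal proof to compare against. The overall strategy you describe — comparing $u$ and $v$ separately to a common Green-function-type barrier $h$ and then dividing — is indeed the standard route to the parabolic local comparison principle (it goes back to Salsa and Fabes--Safonov--Yuan), so the plan is sensible in outline. But the proposal as written breaks at the very first step.

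Your barrier $h(X,t):=r^nG^*(V^+(\Delta_{Cr}),X,t)$ is identically zero on $T(\Delta_r)$. With the convention $G^*(X',t',Y,s)=G(Y,s,X',t')$ stated just before \refproposition{prop:Grean To Meas}, one has $h(X,t)=r^n\,G\bigl((X,t),V^+(\Delta_{Cr})\bigr)$, and the $L$-Green function vanishes when the first time is earlier than the second (forward-in-time causality for $L=\Div(A\nabla)-\partial_t$). Every $(X,t)\in T(\Delta_r)$ has $t\le s+r^2$, while $V^+(\Delta_{Cr})$ sits at time $s+100C^2r^2$; hence $h\equiv0$ on $T(\Delta_r)$, and $h(V^\pm(\Delta_r))=0$ as well, so every ratio you subsequently form is $0/0$. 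The corkscrew point must be \emph{backward} in time: $h(X,t)=G\bigl((X,t),V^-(\Delta_{Cr})\bigr)$ is a genuine positive $L$-solution in $T(\Delta_{2r})$ (pole outside the region for $C$ large), vanishing on $\Delta_{2r}$ and comparable via \refproposition{prop:Grean To Meas} to the adjoint caloric measure.

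Even with the pole corrected, two further steps do not go through as stated. You derive $h(V^+(\Delta_r))\approx h(V^-(\Delta_r))$ from \refproposition{prop:HarnackInequality} alone, but the parabolic Harnack inequality only propagates forward in time: it gives $h(V^-(\Delta_r))\lesssim h(V^+(\Delta_r))$ and not the reverse. The reverse is a backward Harnack inequality (equivalently, doubling of the adjoint parabolic measure), which holds here but needs a separate argument through \refproposition{prop:Grean To Meas}, \reflemma{lemma:ParabolicGrwothEstimateForGwithalpha}, or \cite{GH,GHMN} rather than interior Harnack. Also, the ``matching-exponent'' upper bound on $u$ is not sound: the boundary H\"older exponent $\alpha$ in \eqref{eq:BoundaryHoelderNonnegativeCorkscrewEstimate} and the exponent $\alpha'$ obtained by Harnack-chaining a lower bound for $h$ are independent a priori constants with no reason to satisfy $\alpha\ge\alpha'$; if $\alpha<\alpha'$ the quotient $(\delta/r)^{\alpha-\alpha'}$ blows up as $\delta\to0$. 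Knowing the two exponents coincide is essentially the content of the proposition itself. The correct route to $u(X,t)\lesssim u(V^+(\Delta_r))\,h(X,t)/h(V^+(\Delta_r))$ is the same maximum-principle comparison on $T(\Delta_{2r})$ that you already outline for the lower bound on $v$: control $u$ on the interior parabolic boundary via the Carleson estimate, bound $h$ from below there via \refproposition{prop:Grean To Meas} and Harnack chains, and compare.
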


\begin{proposition}[Lemma 2.15 in \cite{KX}]\label{prop:Sup to L2 bound}
    For solutions to \( Lu = 0 \) and with \( u=0 \) on \( \Delta_{5r} \)
    \[ \sup_{T(\Delta_{2r})} |u| \lesssim \Big( \fint_{T(\Delta_{4r})} |u|^2 \Big)^{1/2}. \qed \]    
\end{proposition}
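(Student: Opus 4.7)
I would prove this by combining the interior Moser estimate (\refproposition{prop:Cacciopolli}) with the boundary H\"older decay (\refproposition{prop:BoundaryHölder}) via a hole-filling iteration on expanding Carleson regions. Set $\Phi(\rho):=\sup_{T(\Delta_\rho)}|u|$ for $\rho\in[2r,4r]$. Since $|T(\Delta_{4r})|\sim r^{n+2}$, the target estimate is equivalent to showing $\Phi(2r)\lesssim r^{-(n+2)/2}\|u\|_{L^2(T(\Delta_{4r}))}$.

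The main intermediate claim is the following iteration inequality: there exist $\epsilon\in(0,\tfrac{1}{8})$ and $C>0$ such that
\[
\Phi(\rho)\le \tfrac{1}{2}\Phi(\rho')+C(\rho'-\rho)^{-(n+2)/2}\Big(\int_{T(\Delta_{4r})}|u|^2\Big)^{1/2}\qquad\text{for all }2r\le\rho<\rho'\le 4r.
\]
To prove it, pick $(X_*,t_*)\in T(\Delta_\rho)$ with $|u(X_*,t_*)|\ge\tfrac{1}{2}\Phi(\rho)$; set $d:=\delta(X_*,t_*)$ and let $(P_*,t_*)\in\partial\Omega$ be a closest boundary point. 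In the \emph{interior case} $d\ge\epsilon(\rho'-\rho)$, the cube $Q(X_*,t_*,\epsilon(\rho'-\rho)/8)$ lies inside $\Omega\cap T(\Delta_{\rho'})$ by a routine triangle-inequality check, so \refproposition{prop:Cacciopolli} yields $|u(X_*,t_*)|^2\lesssim_\epsilon (\rho'-\rho)^{-(n+2)}\int_{T(\Delta_{\rho'})}|u|^2$. In the \emph{boundary case} $d<\epsilon(\rho'-\rho)$, take $R:=(\rho'-\rho)/4$ and verify that $\Delta_{2R}(P_*,t_*)\subset\Delta_{5r}$ (so that $u\equiv 0$ there, using $\rho'\le 4r$ and $\epsilon<1/4$) and that $(X_*,t_*)\in T(\Delta_{R/2}(P_*,t_*))$; then \refproposition{prop:BoundaryHölder} produces
\[
|u(X_*,t_*)|\le C_0(d/R)^\alpha\sup_{T(\Delta_R(P_*,t_*))}|u|\le C_0(4\epsilon)^\alpha\Phi(\rho').
\]
Fixing $\epsilon$ so small that $C_0(4\epsilon)^\alpha\le\tfrac{1}{2}$ absorbs this term into $\tfrac{1}{2}\Phi(\rho')$, and combining the two cases delivers the iteration inequality.

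With the iteration inequality in hand, a standard real-variable iteration lemma (of the $\Phi(\rho)\le\theta\Phi(\rho')+A(\rho'-\rho)^{-\beta}$, $\theta<1$, type) applied to the bounded function $\Phi$ on $[2r,4r]$ gives $\Phi(2r)\lesssim r^{-(n+2)/2}\|u\|_{L^2(T(\Delta_{4r}))}\sim\bigl(\fint_{T(\Delta_{4r})}|u|^2\bigr)^{1/2}$, which is the claim.

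The main obstacle is extracting a sub-unit prefactor in the boundary case: a crude boundary max principle would not suffice, and it is precisely the H\"older \emph{decay} in $\delta(Y,s)/r$ supplied by \refproposition{prop:BoundaryHölder} that allows one to absorb the boundary contribution. The remaining geometric containments (the interior cube inside $\Omega\cap T(\Delta_{\rho'})$ and the boundary ball $\Delta_{2R}(P_*,t_*)$ inside $\Delta_{5r}$) are routine once $\epsilon$ is fixed small.
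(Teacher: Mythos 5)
The paper itself gives no proof of this proposition; it simply cites Lemma~2.15 of Kim--Xu \cite{KX} with a $\qed$ in the statement, so there is no in-paper argument to compare against. Your hole-filling iteration is a correct and self-contained alternative: splitting a near-supremum point of $|u|$ on $T(\Delta_\rho)$ into an interior regime (handled by the Moser/Caccioppoli estimate, \refproposition{prop:Cacciopolli}) and a boundary regime (handled by the quantitative H\"older decay, \refproposition{prop:BoundaryHölder}, whose prefactor $(d/R)^\alpha$ is what makes the absorption possible), then applying the standard real-variable iteration lemma on $[2r,4r]$, is a standard but genuinely adequate route to a local boundedness estimate of this kind. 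Two constant-level details to tidy up: (i) \refproposition{prop:Cacciopolli} needs the $4$-times-dilated cube $Q(X_*,t_*,4r_0)\subset\Omega$, and the parabolic diameter of such a cube carries a dimensional factor, so the half-side $\epsilon(\rho'-\rho)/8$ should be shrunk by a further $c_n$ to guarantee the cube sits below the distance $d$ to $\partial\Omega$; (ii) since you only assume $|u(X_*,t_*)|\ge\tfrac12\Phi(\rho)$, the boundary case yields $\Phi(\rho)\le 2C_0(4\epsilon)^\alpha\Phi(\rho')$, so $\epsilon$ must be fixed so that $2C_0(4\epsilon)^\alpha\le\tfrac12$, not just $C_0(4\epsilon)^\alpha\le\tfrac12$. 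Finally, the iteration lemma needs $\Phi$ finite on $[2r,4r]$ a priori; this is implicit but true, since interior local boundedness plus boundary H\"older continuity on $\Delta_{5r}$ make $u$ continuous on $\overline{T(\Delta_{4r})}$. None of these affects the structure, and the proof is sound.
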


This implies the following:
\begin{lemma}\label{lemma:|u|bound}
    Let \( u \) be a weak solution such  that \( u = 0 \) on \( \Delta_{5r} \subset\Delta_{d}\), where $d=\mbox{diam}(\mathcal O)$.
    Then for \( (X,t) \in T(\Delta_{2r}) \)
    \[ |u(X,t)| \lesssim \frac{G(X,t,V^{-}(\Delta_d))}{G(V^{-}(\Delta_r),V^{-}(\Delta_d))} 
        \Big( \fint_{T(\Delta_{4r})} |u|^2 \Big)^{1/2}. \]
\end{lemma}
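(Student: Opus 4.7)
Our plan is to combine the sup-to-$L^2$ bound with a dominating-solution construction and the Local Comparison Principle.

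Set $M := \left(\fint_{T(\Delta_{4r})} |u|^2\right)^{1/2}$. Since $u=0$ on $\Delta_{5r}$, \refproposition{prop:Sup to L2 bound} gives $\sup_{T(\Delta_{2r})} |u| \leq C_0 M$ for some absolute constant $C_0$. Next introduce a nonnegative dominating solution: let $v$ solve $Lv=0$ on $W := T(\Delta_{2r})$ with $v=0$ on $\Delta_{2r}$ and $v \equiv C_0 M$ on the interior portion $\partial W \cap \Omega$. By the maximum principle, $0\le v\le C_0 M$ and $|u|\le v$ throughout $W$.

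Both $v$ and $w(X,t):=G(X,t,V^-(\Delta_d))$ are nonnegative $L$-solutions on $T(\Delta_{2r})$ that vanish on $\Delta_{2r}$: $w$ vanishes on all of $\partial\Omega$ and its pole $V^-(\Delta_d)$ lies well outside this region (far backward in time, since $5r\leq d$). Applying \refproposition{prop:Comparison_Principle} at scale $r$ then yields
\[
\frac{v(X,t)}{w(X,t)} \approx \frac{v(V^+(\Delta_r))}{w(V^-(\Delta_r))}\quad\text{for } (X,t)\in T(\Delta_r).
\]
Since $v\le C_0 M$ everywhere in $W$, we conclude $|u(X,t)|\le v(X,t)\lesssim M\cdot G(X,t,V^-(\Delta_d))/G(V^-(\Delta_r),V^-(\Delta_d))$ for $(X,t)\in T(\Delta_r)$.

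The hard part will be extending this estimate from $T(\Delta_r)$ to the full $T(\Delta_{2r})$, since \refproposition{prop:Comparison_Principle} only gives control on half the radius. We split by $\delta(X,t)$. If $\delta(X,t)\gtrsim r$, a short parabolic Harnack chain (\refproposition{prop:HarnackInequality}) from $V^-(\Delta_r)$ forward in time to $(X,t)$ gives $w(X,t)\gtrsim w(V^-(\Delta_r))$, so the Green's-function ratio is bounded below by a constant and the bound collapses to the sup estimate $|u(X,t)|\lesssim M$. If $\delta(X,t)\ll r$, pick a boundary point $(P^*,t^*)$ within distance $\delta(X,t)$ of $(X,t)$, which then lies in $\Delta_{3r}$; repeat the construction centered at $(P^*,t^*)$ at a scale $\sim r$, noting that $\Delta_{2r}(P^*,t^*)\subset \Delta_{5r}$ by the triangle inequality so $u$ still vanishes on the relevant boundary piece, and then use a further application of \refproposition{prop:Comparison_Principle} to identify the new local Green's-function ratio with the original one (switching the pole from the local corkscrew to $V^-(\Delta_d)$). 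The main technical burden lies in tracking scales and absorbing volume factors in $M$ through this re-centering.
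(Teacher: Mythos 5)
Your proof uses the same key ingredients as the paper's---the Local Comparison Principle (\refproposition{prop:Comparison_Principle}), the sup-to-$L^2$ bound (\refproposition{prop:Sup to L2 bound}), and the maximum principle---so the approach is essentially the same, with one cosmetic difference: the paper decomposes $u=u_1-u_2$ via the positive and negative parts of $u$ restricted to $\partial_p T(2\Delta)$ and applies the comparison principle directly to each nonnegative piece $u_j$, bounding $u_j(V^+(\Delta_r))$ afterward by the maximum principle and the sup-to-$L^2$ estimate; you instead build a single dominating nonnegative solution $v$ with constant interior boundary data $C_0M$, show $|u|\le v$ by applying the maximum principle to $v\pm u$, and then compare $v$ to the Green's function. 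Both are valid.

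Where you overcomplicate things is the final ``extension from $T(\Delta_r)$ to $T(\Delta_{2r})$'' paragraph. The paper sidesteps this by being informal about the radii (effectively applying the comparison principle at a scale comparable to $2r$, which is harmless since the hypothesis $u=0$ on $\Delta_{5r}$ and the $T(\Delta_{4r})$ average leave room, and the corkscrew points at nearby scales are Harnack-comparable). In your version, the $\delta(X,t)\gtrsim r$ case is fine (the forward-in-time Harnack chain works because all of $T(\Delta_{2r})$ lies after the time of $V^-(\Delta_r)$), but in the re-centering case the step ``use a further application of \refproposition{prop:Comparison_Principle} to identify the new local Green's-function ratio with the original one'' does not follow from the comparison principle as stated---comparing a function with itself gives nothing. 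What you actually need is that $G(V^-(\Delta_r(P^*,t^*)),V^-(\Delta_d))\approx G(V^-(\Delta_r),V^-(\Delta_d))$, which requires either a backward-in-time Harnack/boundary-Harnack argument for the Green's function, or the identification of Green's function values with adjoint caloric measure (\refproposition{prop:Grean To Meas}) plus doubling of $\omega^*$. This gap is fixable, but it is unnecessary machinery; the cleaner route is to carry out the comparison at a slightly larger scale from the start (or simply relabel $r$), exactly as the paper implicitly does.
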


\begin{proof}
Let \( (X,t) \in T(\Delta_{2r}) \) and set \( u^+ \coloneqq \max(u,0)\; u^- \coloneqq \min(-u,0) \).
Next write \( u= u_1 - u_2 \), where \( u_1 \) and \( u_2 \) solve 
\begin{align*}
\begin{cases}
    Lu_1 = 0, \quad \text{in } T(2\Delta),
    \\
    u_1 = u^+,  \text{on } \partial_p T(2\Delta),
\end{cases}\qquad
\begin{cases}
    Lu_2 = 0, \quad \text{in } T(2\Delta),
    \\
    u_2 = u^-,  \text{on } \partial_p T(2\Delta),
\end{cases}
\end{align*}
    respectively. Here $\partial_pT(2\Delta)$ denotes the parabolic part of the boundary of $T(2\Delta)$.
Let \( j \in \{1,2\} \).
By \refproposition{prop:Comparison_Principle} 
\begin{align*}
    u_j(X,t) \lesssim \frac{G(X,t;V^{-}(\Delta_d))}{G(V^{-}(\Delta_r);V^{-}(\Delta_d))} 
        u_j(V^+(\Delta_r)).
\end{align*}
Next note that since \( V^+(\Delta_r) \in T(2\Delta) \) we can apply the maximum principle 
    (see e.g. \cite{Aro68}) 
    followed by \refproposition{prop:Sup to L2 bound} to see that
\[ u_j(V^+(\Delta_r)) \leq \sup_{\partial T(2\Delta)} |u| \lesssim \Big( \fint_{T(\Delta_{4r})} |u|^2 \Big)^{1/2}. \]
Thus
\[ |u(X,t)| \leq u_1(X,t) + u_2(X,t) \lesssim \Big( \fint_{T(\Delta_{4r})} |u|^2 \Big)^{1/2}, \]
    as desired.
\end{proof}

    
\section{Reverse Hölder inequality}\label{S:reverse Holder}
In this section we establish the Reverse Hölder inequality of Shen \cite{She06}
    in the parabolic case.

\begin{theorem}\label{thm:Reverse Holder}
    Let \( 1 \le p < \infty \) and suppose that \( (D_{L^*})_{p'} \) is solvable
        and suppose that \( u \) is a weak solution of $Lu=0$ with \( u=0 \) 
        on \(  \Tilde{\Delta} \eqqcolon 80 \Delta \) where \( \Delta = \Delta_r(P_0,\tau_0) \), with $r\lesssim \mbox{diam}(\mathcal O)$.
    Then
    \[ \Big( \fint_{\Delta} {N}[\nabla u]^p \Big)^{1/p}
        \lesssim \fint_{\Tilde\Delta} {N}[\nabla u], \]
as well as
\begin{equation}\label{zsaz}
     \Big( \fint_{\Delta} {N}[\nabla u]^p \Big)^{1/p}
        \lesssim \fint_{T(\Tilde\Delta)} |\nabla u|. 
 \end{equation}       
\end{theorem}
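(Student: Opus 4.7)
My plan is to adapt Shen's duality argument from~\cite{She06} to the parabolic setting. By $L^p$--$L^{p'}$ duality on $\Delta$, the desired estimate reduces to showing that for every non-negative $\phi$ supported in $\Delta$ with $\|\phi\|_{L^{p'}(\Delta)}=1$,
\[
   \int_\Delta N[\nabla u]\,\phi\, d\sigma \lesssim |\Delta|^{1/p'} \fint_{\tilde\Delta} N[\nabla u].
\]
I extend $\phi$ by zero to $\partial\Omega$ and let $w$ solve $L^* w = 0$ in $\Omega$ with boundary data $\phi$; the hypothesis $(D_{L^*})_{p'}$ then yields $\|N[w]\|_{L^{p'}(\partial\Omega)} \lesssim 1$.

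For the pointwise bound on $N[\nabla u]$: for $(P,t)\in\Delta$ pick $(X^*,t^*)\in\Gamma(P,t)$ that (up to a constant) realizes the supremum in $N[\nabla u](P,t)$. Since $u$ vanishes on $\tilde\Delta$, combining boundary Cacciopolli (\refproposition{prop:Boundary Cacciopolli}) with \reflemma{lemma:|u|bound} applied at scale $r\sim r(\Delta)$ (admissible because $\tilde\Delta\supset\Delta_{5r}$) yields
\[
  N[\nabla u](P,t) \lesssim \frac{1}{\delta(X^*)}\,\frac{G(X^*, t^*;\, V^-(\Delta_d))}{G(V^-(\Delta_{r(\Delta)}(P_0,t_0));\, V^-(\Delta_d))}\,\left(\fint_{T(\tilde\Delta)}|u|^2\right)^{1/2},
\]
with $d=\mathrm{diam}(\mathcal{O})$ and the denominator $(P,t)$-independent. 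Applying the Green's function-to-parabolic-measure identities (\refproposition{prop:Grean To Meas}, \eqref{eq:GreenToMeas}) to both $G$ and its adjoint $G^*$ converts this ratio into a normalized $\omega^{*V^-(\Delta_d)}$-density of the surface ball $\Delta_{\delta(X^*)}(P,t)$. Writing $k = d\omega^{*V^-(\Delta_d)}/d\sigma$, the bilinear form is then controlled by a Hardy-Littlewood maximal function of $k$ paired with $\phi$; combined with $w(V^-(\Delta_d)) = \int \phi\, k\, d\sigma$ and the $L^{p'}$ boundedness of the HL maximal together with the hypothesis $\|N[w]\|_{L^{p'}}\lesssim 1$, this yields
\[
  \int_\Delta N[\nabla u]\,\phi\, d\sigma \lesssim |\Delta|^{1/p'}\,r(\Delta)^{-1}\,\left(\fint_{T(\tilde\Delta)}|u|^2\right)^{1/2}.
\]

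To close the argument, I bound the $L^2$ average of $u$ using its vanishing on $\tilde\Delta$: integrating $\nabla u$ along non-tangential paths (or invoking the boundary Poincaré inequality \refproposition{prop:Boundary Poincare}) gives $|u(X,s)| \lesssim \delta(X,s)\,N[\nabla u](P',t')$ for a suitable shadow $(P',t')\in\tilde\Delta$, and \reflemma{lemma:Gradient L2toL1} permits the passage from $L^2$ to $L^1$ on the gradient side. This produces $(\fint_{T(\tilde\Delta)}|u|^2)^{1/2}\lesssim r(\Delta)\cdot \fint_{\tilde\Delta} N[\nabla u]$, which cancels the factor $r(\Delta)^{-1}$ above and yields the first inequality. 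The second inequality~\eqref{zsaz} follows from the same chain with the endpoint replaced by $(\fint_{T(\tilde\Delta)}|u|^2)^{1/2}\lesssim r(\Delta)\,\fint_{T(\tilde\Delta)}|\nabla u|$, coming directly from \refproposition{prop:Boundary Poincare}. The main obstacle is the Green's-function-to-$w$ chain in the middle paragraph: because $L^*$ is backward in time, one must carefully choose the time-backward corkscrew $V^-$, apply \refproposition{prop:Grean To Meas} once for $G$ and once for $G^*$, and invoke the comparison principle \refproposition{prop:Comparison_Principle} in the orientation compatible with the adjoint, in order to tie the Green's function ratio to a quantity genuinely controlled by $\|N[w]\|_{L^{p'}}$.
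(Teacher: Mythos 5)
Your overall scaffolding — estimate $N[\nabla u]$ pointwise by a Green's function ratio via \reflemma{lemma:|u|bound} and \refproposition{prop:Comparison_Principle}, convert $G$ to $\omega^*$ via \refproposition{prop:Grean To Meas}/\eqref{eq:GreenToMeas}, and close using boundary Poincar\'e (\refproposition{prop:Boundary Poincare}) and \reflemma{lemma:Gradient L2toL1} — matches the paper's. But your treatment of the $(D_{L^*})_{p'}$ hypothesis genuinely diverges, and this is where there is a gap. The paper does \emph{not} pair against a test function $\phi$ or construct a dual solution $w$. Once it has the pointwise bound $N^r[\nabla u](P,\tau)\lesssim \frac{r^n}{\omega^*(\Delta)}\bigl(\fint_{T(\Delta_{4r})}|u|^2\bigr)^{1/2}M^r[k](P,\tau)$ with $k=d\omega^*/d\sigma$, it simply raises to the $p$-th power, averages over $\Delta$, invokes $L^p$-boundedness of the truncated maximal operator to reduce to $\bigl(\fint_{4\Delta}k^p\bigr)^{1/p}$, and then uses that $(D_{L^*})_{p'}$ solvability plus doubling gives $\omega^*\in B_p(d\sigma)$, i.e.\ $\bigl(\fint_{4\Delta}k^p\bigr)^{1/p}\lesssim r^{-n-1}\omega^*(\Delta)$. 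The factors $r^n/\omega^*(\Delta)$ and $r^{-n-1}\omega^*(\Delta)$ then cancel cleanly. No duality is needed.

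Your proposed alternative — pairing with $\phi$, extending it by zero, solving $L^*w=0$ with datum $\phi$, and trying to leverage $\|N[w]\|_{L^{p'}}\lesssim 1$ — leaves the decisive step unproved. After the Green's-function-to-measure conversion you arrive at something like $\int_\Delta M^r[k]\,\phi\,d\sigma$; the identity $w(V^-(\Delta_d))=\int\phi\,k\,d\sigma$ gives only a single number, and you never explain how $\|N[w]\|_{L^{p'}}\lesssim 1$ (or the $L^{p'}$-boundedness of $M$) bounds a maximal-averaged quantity $\int M^r[k]\,\phi$. You explicitly flag this chain as ``the main obstacle,'' and indeed it is: without the $B_p$ characterization of $(D_{L^*})_{p'}$, the duality route does not visibly close. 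The fix is exactly what the paper does — replace the $w$-duality by the standard equivalence ``$(D_{L^*})_{p'}$ solvable $\Longleftrightarrow$ $\omega^*\in B_p(d\sigma)$,'' after which no test function or adjoint solution is required and the proof is a straightforward computation.

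One more point worth flagging: even if the duality could be closed, pairing $N[\nabla u]$ against $\phi\in L^{p'}$ only recovers the $L^p$ norm up to the issue that the linearization $(P,t)\mapsto(X^*,t^*)$ must be measurable and that $M^r[k]$ must live in $L^p$ \emph{a priori} for the dual pairing to make sense — both are automatic once one works with the $B_p$ condition, but they would need justification along your route.
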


\textit{Proof:}
Let \( (X,t) \in \Gamma(P,\tau) \).
    If \( \delta(X,t) \geq r \), then 
\[ \tilde{S}(X,t) \coloneqq \{ (P,s) \in \partial\Omega : (X,t) \in \Gamma(P,s) \}
    = \tilde{S}(X) \times (t-\delta(X)^2,t+\delta(X)^2), \]
i.e.,  $\tilde{S}(X,t)$, $\tilde{S}(X)$ are backward in space-time (space) cones originating from  the point $(X,t)$
defined earlier in Section \ref{S:basics}.
It follows from the definition of $\tilde{S}$ that
 \( |\tilde{S}(X,t)| \geq  4|\tilde{S}(X)|r^2 \gtrsim r^{n+1}, \) and thus
\begin{align*}
    \Average^2[\nabla u](X,t) 
    \leq \inf_{\tilde{S}(X,t)} {N}[u]
    \leq \fint_{\tilde{S}(X,t)} {N}[u]
    \lesssim \fint_{80 \Delta} {N}[u].
\end{align*}
Thus we can disregard the points whose distance to the boundary is $r$ or larger and only consider the inequality for truncated non-tangential maximal function at the height $r$:
\[ \Big( \fint_{\Delta} {N}^r[\nabla u]^p \Big)^{1/p}
        \lesssim \fint_{\Tilde\Delta} {N}[\nabla u]. \]
By \refproposition{prop:Cacciopolli} the $L^2$ average of $\nabla u$ over $C_{\delta(X)/4}(X,t)$ can be controlled from above by the $L^2$ average of $|u/\delta|$ over $C_{\delta(X)/2}(X,t)$ which is further controlled by just the supremum of $|u/\delta|$ over $C_{\delta(X)/2}(X,t)$. It follows that

\[ {N}^r[\nabla u](P,\tau) 
    \lesssim {N}_{\sup}^r[u/\delta](P,\tau) , \]
where 
$${N}_{\sup}^r[v](P,\tau)=\sup_{\Gamma^{2r}_{\tilde{a}}(P,\tau)}|v|,$$
for some sufficiently large $\tilde{a}>1$.

Let \( d \geq 10r \). 
By \reflemma{lemma:|u|bound}
\[ N_{\sup}^r[u/\delta](P,\tau) 
    \lesssim \frac{1}{G(V^{-}(\Delta),V^-(\Delta_{d}))} \Big( \fint_{T(\Delta_{4r})} |u|^2 \Big)^{1/2} 
    N^r_{\sup}\left[\frac{G(\cdot,V^-(\Delta_{d}))}{\delta(\cdot)}\right], \]
    and threfore
\[ \Big( \fint_{\Delta} {N}^r[\nabla u]^p \Big)^{1/p}
    \lesssim  \Big( \fint_{T(\Delta_{4r})} |u|^2 \Big)^{1/2} \frac{1}{G(V^{-}(\Delta),V^-(\Delta_{d}))}
        \Big( \fint_{\Delta} N^r_{\sup}\left[G(\cdot,V^-(\Delta_{d}))/\delta(\cdot)\right]^p \Big)^{1/p}. \]
Let \( \omega^* = \omega^{*V^-(\Delta_{d})} \)  be the parabolic measure of the adjoint operator. 
Observe that by \eqref{eq:GreenToMeas} 
\[ \frac{1}{G(V^{-}(\Delta),V^-(\Delta_{d}))} \sim \frac{r^{n}}{\omega^*(\Delta))}, \]
It follows the for \( \rho := \delta(X,t) \) we have
\[ \frac{G(X,t,V^-(\Delta_{d}))}{\delta(X,t)} 
    \sim \frac{\omega^*(\Delta_{\rho}(P,\tau + 100\rho))}{\rho^{n+1}}
    \lesssim_n \frac{\omega^*(\Delta_{11\rho}(P,\tau))}{\rho^{n+1}}. \]
Thus 
\[ N^r_{\sup}[G(\cdot,V^-(\Delta_{d}))/\delta(\cdot)] \leq M^r\left[\frac{d\omega^*}{d\sigma}\right](P,\tau),\]
    where as before $M^r$ denotes the usual truncated maximal function which is bounded on $L^p$ for $p>1$.
    It follows that
\[ \Big( \fint_{\Delta} N_{\sup}^r[G(\cdot,V^-(\Delta_{d}))/\delta(\cdot)]^p \Big)^{1/p} 
    \lesssim \Big( \fint_{4\Delta} \Big| \frac{d\omega^*}{d\sigma} \Big|^p\Big)^{1/p}
    \lesssim r^{-n-1} \omega^*(\Delta), \]
    where in the final inequality we have used the fact that \( \omega^* \in B_p(d\sigma) \),
    a fact that follows from the solvability of \( (D^*)_{p'} \) and  that the measure $\omega^*$ is doubling.
Hence
\begin{align*}
    \Big( \fint_{\Delta} {N}^r[\nabla u]^p \Big)^{1/p}
    &\lesssim \frac{r^{n}}{\omega^*(\Delta)} r^{-n-1} \omega^*(\Delta) 
        \Big( \fint_{T(\Delta_{4r})} |u|^2 \Big)^{1/2}
    =  \Big( \fint_{T(\Delta_{4r})} |u/r|^2 \Big)^{1/2}
    \\
    &\lesssim \Big( \fint_{T(\Delta_{4r})} |u/\delta|^2 \Big)^{1/2}
    \lesssim \Big( \fint_{T(\Delta_{20r})} |\nabla u|^2 \Big)^{1/2}
    \lesssim  \fint_{T(\Delta_{40r})} |\nabla u| ,
\end{align*}
    where in the last two inequalities we have used \refproposition{prop:Boundary Poincare}
    and \refproposition{lemma:Gradient L2toL1}. This gives us \eqref{zsaz}. 
It remains to show that
\begin{equation}\label{zmac}
    \int_{T(\Delta_{40r})} |\nabla u| \lesssim  r\int_{\Delta_{40r}} N[\nabla u], 
\end{equation}
which is a rather straightforward calculation using Fubini's theorem and Whitney decomposition of the set 
$T(\Delta_{40r})$. We omit the details.\qed\medskip

\section{Proof of \reftheorem{thm:Dp_implies_Rq}}\label{S:duality}

\subsection{Unbounded case}
For this section we assume that \( \diam(\Base) = \infty \). What this means is that for any boundary point
$P\in\partial\mathcal O$ there exists interior corkscrew balls of arbitrary large size as the domain is unbounded.

\subsubsection{Almost good lambda argument}
The bulk of the proof of \reftheorem{thm:Dp_implies_Rq} is proving the \lq\lq almost \( \lambda \)-inequality" \reflemma{lemma:Almost lambda inequality}.
Let \( p > q \), and set
\[ E(\lambda) \coloneqq \{ M[N[\nabla u]^q] > \lambda \}
    \WORD{and} A = A(\EPS) \coloneqq 1/(2\EPS)^{q/p}. \]
Let \( g \in L^p(\partial\Omega) \) be a Haj\l{}asz upper spatial gradient for \( f \)
    and let \( h \) be like in \refproposition{prop:Product_Rule}.
We have the following inequality:
\begin{lemma}\label{lemma:Almost lambda inequality}
    Let \( 1<q<p<\infty \).
    Suppose that \( (R_L)_q \) and \( (D_{L^*})_{p'} \) are solvable.
    There exists \( \EPS,\gamma \) depending only on \( p,q,n,L,\Omega \) such that
        for all \( \lambda > 0 \)
    \begin{align}\label{eq:Lambda_inequality}
        |E(A\lambda)| \leq \EPS |E(\lambda)| 
            + |\{ M[|g|^q + |D_{1/2}^t f|^q + |h|^q] > \gamma\lambda \}|,
    \end{align}
\end{lemma}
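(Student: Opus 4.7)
The plan is to run Shen's good-$\lambda$ Calderón--Zygmund argument from \cite{She06} in the parabolic setting, with \reftheorem{thm:Reverse Holder} playing the role of its elliptic counterpart. First I decompose the open set $E(\lambda)$ into a Vitali--Whitney family of surface balls $\{\Delta_k\}$, $\Delta_k = \Delta_{r_k}(P_k,\tau_k)$, with $E(\lambda) \subset \bigcup_k \Delta_k$, bounded overlap of $\{5\Delta_k\}$, and $5\Delta_k \setminus E(\lambda) \ne \emptyset$. Write $F := \{M[|g|^q + |D_{1/2}^t f|^q + |h|^q] > \gamma\lambda\}$. Since $A > 1$ one has $E(A\lambda) \subset E(\lambda) \subset \bigcup_k \Delta_k$, so it suffices to establish
\[ |E(A\lambda) \cap \Delta_k \setminus F| \le C\varepsilon\,|\Delta_k| \qquad \text{for each } k, \]
and then sum over $k$ using $\sum_k |\Delta_k| \lesssim |E(\lambda)|$.

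Fix $\Delta_k$ with $5\Delta_k \setminus F$ nonempty (otherwise the contribution vanishes) and pick a reference point $(P^\star,\tau^\star) \in 5\Delta_k \setminus F$. Let $c_k := \fint_{\Delta_k} f$ and take $\varphi_k$ a smooth cutoff equal to $1$ on $K\Delta_k$ and supported in $2K\Delta_k$ for a large absolute constant $K$. Decompose $u - c_k = u_1 + \tilde u_2$, where $u_1$ and $\tilde u_2$ are the solutions of $Lu = 0$ with boundary data $f_1 := (f-c_k)\varphi_k$ and $\tilde f_2 := (f-c_k)(1-\varphi_k)$; in particular $\tilde u_2$ vanishes on $K\Delta_k$ and $\nabla u = \nabla u_1 + \nabla \tilde u_2$. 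The key localization is that for $(P,\tau) \in \Delta_k$, any parabolic ball $B \ni (P,\tau)$ with $r(B) \gtrsim r_k$ must contain a point of $E(\lambda)^c$ (since $B \supset 5\Delta_k$), which forces $\fint_B N[\nabla u]^q \le \lambda < A\lambda$. Hence $M[N[\nabla u]^q]$ can be replaced on $\Delta_k$ by its restriction $M_k$ to balls contained in $C\Delta_k$, and I split $N[\nabla u]^q \lesssim N[\nabla u_1]^q + N[\nabla \tilde u_2]^q$ to treat the two pieces separately.

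For the $u_1$ piece, $(R_L)_q$ combined with \refproposition{prop:Product_Rule} delivers
\[ \|N[\nabla u_1]\|_{L^q(\partial\Omega)}^q \lesssim \|f_1\|_{\dot{L}^q_{1,1/2}(\partial\Omega)}^q \lesssim \int_{CK\Delta_k} \bigl(|D_{1/2}^t f|^q + |h|^q\bigr) \lesssim \gamma\lambda\,|\Delta_k|, \]
the last step because $(P^\star,\tau^\star) \notin F$; the weak-$(1,1)$ bound for $M_k$ then contributes $\lesssim (\gamma/A)|\Delta_k|$. For the $\tilde u_2$ piece, since $\tilde u_2$ vanishes on $K\Delta_k$ and $\diam\mathcal O = \infty$, \reftheorem{thm:Reverse Holder} applies on $C\Delta_k$ and, together with $\nabla \tilde u_2 = \nabla u - \nabla u_1$, gives
\[ \Bigl(\fint_{C\Delta_k} N[\nabla \tilde u_2]^p\Bigr)^{1/p} \lesssim \fint_{C'\Delta_k} N[\nabla u] + \fint_{C'\Delta_k} N[\nabla u_1] \lesssim (1+\gamma^{1/q})\lambda^{1/q}, \]
using $(P^\star,\tau^\star) \notin E(\lambda)$ together with Jensen. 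Then $L^{p/q}$-boundedness of $M_k$ contributes $\lesssim A^{-p/q}(1+\gamma^{1/q})^p|\Delta_k| = 2\varepsilon(1+\gamma^{1/q})^p|\Delta_k|$ (since $A^{p/q} = 1/(2\varepsilon)$). Choosing $\gamma$ of order $\varepsilon$ makes both contributions $\lesssim \varepsilon|\Delta_k|$, which is the required bound.

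The main technical subtlety will be invoking \refproposition{prop:Product_Rule} with a cutoff wide enough that $\|f_1\|_{\dot{L}^q_{1,1/2}}$ is controlled purely by data on a bounded dilate of $\Delta_k$, so that the single reference point $(P^\star,\tau^\star)$ detects smallness through $M$. Because $D_{1/2}^t$ is non-local, this is the one step where the parabolic setting genuinely complicates the elliptic template of \cite{She06}, and is precisely what \refproposition{prop:Product_Rule} was designed to handle; everything else amounts to careful bookkeeping of the Whitney constants and the parameters $A$, $\gamma$, $\varepsilon$.
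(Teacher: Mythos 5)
Your overall architecture matches the paper's: a Whitney-type decomposition of $E(\lambda)$, localization of the maximal function via maximality, a split of the boundary data by a cutoff, $(R_L)_q$ together with \refproposition{prop:Product_Rule} for the local piece, \reftheorem{thm:Reverse Holder} for the piece that vanishes near the Whitney ball, and a final tuning of $A$, $\gamma$, $\varepsilon$. Two points do not quite close, and one of them is essential.

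The essential gap is in the $\tilde u_2$ estimate. You apply the reverse H\"older inequality at exponent $p$ and the $L^{p/q}$ maximal inequality, arriving at a contribution $\lesssim A^{-p/q}|\Delta_k| = 2\varepsilon|\Delta_k|$ up to a fixed multiplicative constant $C$ coming from the reverse H\"older constant, the $L^{p/q}$-operator norm of $M$, the overlap constant of $\{5\Delta_k\}$, and \refproposition{prop:Product_Rule}. Since $A^{p/q} = 1/(2\varepsilon)$ is \emph{exactly} the factor that must be beaten in the subsequent integration $\int_0^K|E(\lambda)|\lambda^{p/q-1}\,d\lambda = A^{p/q}\int_0^{K/A}|E(A\lambda)|\lambda^{p/q-1}\,d\lambda$, a bound of the form $|E(A\lambda)|\le C\varepsilon|E(\lambda)|+|F|$ with $C\ge 2$ cannot be absorbed: you would need $A^{p/q}\cdot C\varepsilon < 1$, i.e.\ $C<2$, and no amount of shrinking $\varepsilon$ helps because $A$ scales with $\varepsilon$ in lockstep. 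The paper resolves this by exploiting the self-improvement of $B_p$ weights: solvability of $(D_{L^*})_{p'}$ implies $(D_{L^*})_{\bar p'}$ for some $\bar p>p$, so \reftheorem{thm:Reverse Holder} can be applied at exponent $\bar p$ and the weak $(\bar p/q,\bar p/q)$ maximal estimate used. This replaces $A^{-p/q}$ by $A^{-\bar p/q} = (2\varepsilon)^{\bar p/p} = \varepsilon\cdot 2^{\bar p/p}\varepsilon^{\bar p/p-1}$; the extra factor $\varepsilon^{\bar p/p - 1}\to 0$ as $\varepsilon\to 0$ (because $\bar p>p$) and is what absorbs the accumulated constants. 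Without it, the ``careful bookkeeping'' you defer to at the end simply does not close.

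A secondary and easily repaired slip: you choose $(P^\star,\tau^\star)\in 5\Delta_k\setminus F$ but later use $(P^\star,\tau^\star)\notin E(\lambda)$, which is not implied. You should use two reference points — one in $5\Delta_k\setminus F$ (from the nonemptiness assumption) to control the localized data norm via \refproposition{prop:Product_Rule}, and a second in $5\Delta_k\setminus E(\lambda)$ (guaranteed by the Whitney construction) to control $\fint_{C'\Delta_k}N[\nabla u]$ via Jensen. The paper uses dyadic Christ cubes and their maximality in place of your Vitali balls for this same purpose; that choice is cosmetic, but the two distinct smallness conditions must both be available.
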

\textit{Proof:}  
Let \( \EPS \in (0,1) \) be a small constant to be determined.
By \cite{Christ} there exists a family of \lq\lq dyadic cubes" such that:
\begin{enumerate}
    \item If \(l\geq k\) then either \(Q_\beta^l\subset Q_\alpha^k\) or \(Q_\beta^l\cap Q_\alpha^k=\emptyset\),
    \item For each \((k,\alpha)\) and \(l<k\) there is a unique \(\beta\) so that \(Q_\alpha^k\subset Q_\beta^l\),
        \item Each \(Q_\alpha^k\) contains a ball \( \Delta_\alpha^k \), 
        with \( r(\Delta) = 2^{-k} \)
    \item There exists a constant \(C_0>0\) 
        such that \(2^{-k}\leq \mathrm{diam}(Q_\alpha^k)\leq C_0 2^{-k}\),
\end{enumerate}
    where the diameter is taken in the parabolic metric.
The last listed property implies together with the Ahlfors regularity of the surface measure that \(\sigma(Q_\alpha^k)\approx 2^{-k(n+1)}\). 
\\

Let \( E(\lambda) = \bigcup_k Q_k \) 
    be a Whitney decompotition of \( E(\lambda) \)
    into a disjoint collection of maximal such \lq\lq dyadic subcubes". 
In this place we use unboundedness of our domain, 
    as on a bounded domain for diam$(Q_k)>>\mbox{diam}(\mathcal O)$ 
    the cubes become \lq\lq misshaped" (with long side in $t$ much larger that the size in spatial variables). 
We claim that we can choose \( \EPS,\gamma,C_0 \) 
    such that if \( Q_k \) is a cube with the property 
\begin{align}
    Q_k \cap \{ M[|D_{1/2}^t f|^q + |g|^q + |h|^q] \leq \gamma\lambda \} \neq \EMP,
\end{align}
    then
\begin{align}
    |E(A\lambda) \cap Q_k| \leq \EPS |Q_k|.
\end{align}
The desired estimate \eqref{eq:Lambda_inequality} then follows by summation in $k$.
Let us now fix a cube \( Q_k \) and drop the index, 
    writing \( Q = Q_k, \Delta = \Delta_k \) and letting \( r = r(\Delta) \).
To prove the desired implication, first observe that
\begin{align}\label{eq:Maximal function}
    M[|{N}[\nabla u]|^q](P,\tau) 
    \leq \max\{ M^{C_0 r}[{N}[\nabla u]^q](P,\tau), C_1 \lambda \}
\end{align}
    for any \( (P,\tau) \in Q \) and with \( C_1 = C_1(\mathcal O) \).
This is because \( Q \) is maximal and so \( 2C_0 \Delta \not\subset E(\lambda) \).
Assume that \( A \geq C_1 \).
It follows that
\begin{align}\label{eq:localize_to_Q}
    |Q \cap E(A\lambda)| \leq |\{ (P,\tau) \in Q : 
        M^{C_0r}[{N}[\nabla u]^q](P,\tau) > A \lambda \}|.
\end{align}

Next let \( \varphi = \varphi_Q \) be a smooth cut-off function on \( \R^{n+1} \) such that 
\[ \varphi = 1 \text{ on } C_2Q, \quad \varphi= 0, \text{ on } \partial\Omega \setminus 2C_2Q,
   \quad |\nabla \varphi| \lesssim \ell(Q)^{-1}, \]
   where \( C_2 \geq 160C_0 \).
Define 
\[ f_Q \coloneqq f - \alpha_Q, \quad \alpha_Q \coloneqq \fint_{80Q} f \]
Now let \( v = v_Q \) be the unique weak solution of 
\begin{align*}
    \begin{cases}
        Lv = 0, &\text{ in } \Omega,\\
        v = \varphi f_Q, &\text{ on } \partial\Omega,
    \end{cases} 
\end{align*}
Let \( \bar{p} > p \).
In view of \eqref{eq:localize_to_Q}, we have
\begin{align*}
    |Q \cap E(A\lambda)| 
    & \leq |\{ P \in Q : M^{C_0r}[{N}[\nabla(u-v)]^q](P,\tau) > \frac{A\lambda}{2^q} \}|
    \\
        &\quad+ |\{ P \in Q : M^{C_0r}[{N}[\nabla v]^q](P,\tau) > \frac{A\lambda}{2^q} \}|
    \\
    & \lesssim (A\lambda)^{-\Bar{p}/q} \int_{2C_0 \Delta} {N}[\nabla (u-v)]^{\bar{p}} 
        + (A\lambda)^{-1} \int_{2C_0 \Delta} {N}[\nabla v]^{q} \eqqcolon J_1 + J_2, 
\end{align*}
    where we have used the weak \( (\frac{\Bar{p}}{q},\frac{\Bar{p}}{q}) \) and \( (1,1) \)
    estimates for the maximal function.
\\

We start with the term \( J_2 \).
Since \( (R_L)_q \) is solvable, and applying \refproposition{prop:Product_Rule}, we have that for some large $C_3 >>1$ we have that
\begin{align*} 
    \| {N}[\nabla v] \|_{L^q(\partial\Omega)}
    &\lesssim \| \varphi f_Q \|_{\Dot{L}_{1,1/2}^q(\partial\Omega)} 
    \lesssim \| f \|_{\Dot{L}_{1,1/2}^q(C_3 \Delta)} + \| h \|_{L^q(C_3 \Delta)}
    \\
    &\lesssim \int_{C_3 \Delta}  |D_t^{1/2} f|^q + |g|^{q} + |h|^q
    \lesssim \gamma\lambda|Q|\
\end{align*}
    and hence 
\begin{align}\label{eq:J_2 bound}
    J_2 \lesssim A^{-1}\gamma|Q| \lesssim \gamma \EPS^{q/p}|Q|.     
\end{align}

Now we deal with the first term \( J_1 \).
Observe that \( (u-\alpha) - v \) is a weak solution 
    whose boundary data \( f_Q(1-\varphi) \) vanishes on \( C_2\Delta \).
Also note that the solvability of \( (D_{L^*})_{p'} \) implies that \( (D_{L^*})_{\Bar{p}'} \) 
    is solvable for some \( \Bar{p} > p \) due to properties of the $B_p$ weights.
It then follows by \reftheorem{thm:Reverse Holder} that
\begin{align*}
    \int_{2C_0\Delta} {N}[\nabla (u-v)]^{\bar{p}} 
    &= |2C_0\Delta| \fint_{2Q} {N}[\nabla (u-v)]^{\bar{p}} 
    \lesssim |Q| \Big( \fint_{C_2 \Delta} {N}[\nabla (u-v)] \Big)^{\Bar{p}}
    \\
    &\lesssim |Q| \Bigg[ \Big( \fint_{C_2\Delta} {N}[\nabla u]^q \Big)^{\Bar{p}/q}
        + \Big( \fint_{C_2\Delta} {N}[\nabla v]^q \Big)^{\Bar{p}/q} \Bigg].
\end{align*}
Now, we already know that \( 
\left(\fint_{C_2\Delta}\Tilde{N}[\nabla v]^q\right)^{1/q} \lesssim \lambda\gamma \)
    and hence it remains to consider the first term.
Note that since \( C_2\Delta \not\subset E(\lambda) \) 
    there exists a point \( (P',\tau') \in C_2\Delta \) such that \( M[N[\nabla u]^q](P',\tau') \leq \lambda \).
Let \( \Delta' \) be a ball obtained by translating \( \Delta \) to be centered at \( (P',\tau') \),
    then
\[ \fint_{C_2\Delta} N[\nabla u]^q \lesssim \fint_{2C_2 \Delta'} N[\nabla u]^q \leq 
    M[N[\nabla u]^q](P',\tau') \leq \lambda. \]
Thus 
\begin{align}
    J_1 \lesssim \frac{1}{(A\lambda)^{\Bar{p}/q}} \lambda^{\Bar{p}/q} |Q| 
    = A^{-\Bar{p}/q} |Q| = \EPS^{\Bar{p}/p} |Q| = \EPS|Q| \EPS^{\Bar{p}/p-1}     
\end{align}

 and combining this with \eqref{eq:J_2 bound} we get
\[ |Q \cap E(A\lambda)| \lesssim J_1 + J_2 \lesssim \EPS|Q|(\EPS^{\Bar{p}/p-1} + \gamma \EPS^{-q/p-1}). \]
Hence, first choosing \( \EPS \) small enough that \( \EPS^{\Bar{p}/p-1} \leq 1/2 \)
    and then \( \gamma \) small enough that \( \gamma \EPS^{q/p-1} \leq 1/2 \) we indeed have that
\[ |Q \cap E(A\lambda)| \leq \EPS|Q|, \]
    thus completing our the proof.\qed\\

We are now ready to finish the proof of  \reftheorem{thm:Dp_implies_Rq}$\,$ in the unbounded case, i.e. when \( \diam(\Base) = \infty \).
Like in \cite{She06} we leverage \reflemma{lemma:Almost lambda inequality}
    for a change of variables to see that
$$ \int_{\partial\Omega} {N}[\nabla u]^p 
    \leq \int_{\partial\Omega} M[N[\nabla u]^q]^{p/q}
        \sim \lim_{K\to\infty }\int_0^K |E(\lambda)|\lambda^{p/q-1}d\lambda.$$    
For a fixed$K$  the integral on the righthand side is finite as   $\lambda^{p/q-1}\le K^{p/q-1}<\infty$.
Thus for a fixed $K$ we have    
    
\begin{align*}
   \int_0^K |E(\lambda)&|\lambda^{p/q-1}d\lambda 
    \leq  \frac{1}{2\EPS} \int_0^{K/A} |E(A\lambda)|\lambda^{p/q-1}d\lambda
    \\    
    &\leq \frac{1}{2} \int_0^{K/A} |E(\lambda)|\lambda^{p/q-1}d\lambda
            + \frac{1}{2\EPS}\int_0^{K/A} |\{ M[|D_{1/2}^t f|^q + |g|^q + |h|^q] > \gamma\lambda \}|\lambda^{p/q-1}d\lambda
    \\
    &\lesssim_{\EPS} \int_0^{K/A} |\{ M[|D_{1/2}^t f|^q + |g|^q + |h|^q] > \gamma\lambda \}|\lambda^{p/q-1}d\lambda,
\end{align*}
where in the last step since $K/A\le K$ we have  moved the term  $\frac{1}{2} \int_0^{K/A} |E(\lambda)|\lambda^{p/q-1}d\lambda$  to the lefthand side. We take limit $K\to\infty$. Recall, that
 \( h \) is as in \refproposition{prop:Product_Rule}
    and \( g \in L^p(\partial\Omega) \) is a Haj\l{}asz upper spatial gradient for \( f \). 
Thus \( \| h \|_{L^p(\partial\Omega)} \lesssim \| D_t^{1/2} f \|_{L^p(\partial\Omega)} \)
    and \( \| g \|_{L^p(\partial\Omega)} \lesssim \| f \|_{\Dot{L}_{1}^p(\partial\Omega)} \)
    therefore we have that
\begin{align*}
  \int_{\partial\Omega} {N}[\nabla u]^p\lesssim  \int_0^\infty &|\{ M[|D_{1/2}^t f|^q + |g|^q + |h|^q] > \gamma\lambda \}|\lambda^{p/q-1}d\lambda
    \\
    &\lesssim_{\gamma} \int_{\partial\Omega} M[|D_{1/2}^t f|^q + |g|^q + |h|^q]^{p/q}
    \\
    &\lesssim \int_{\partial\Omega} |D_{1/2}^t f|^p + |g|^p + |h|^p
    \lesssim \| f \|_{\Dot{L}_{1,1/2}^p(\partial\Omega)}^p,
\end{align*}
    completing the proof.\qed\\

\subsection{Bounded case}

Now suppose that \( \diam(\mathcal{O}) < \infty \),
    by rescaling we may without loss of generality assume that \( \diam(\mathcal{O}) = 1 \).
The first problem we run into is that the reverse Holder inequality from
    \reftheorem{thm:Reverse Holder} does not work for boundary cubes \( \Delta \) 
    with \( \diam(\Delta) >> 1 \).
For this reason we can prove an analogue of \reflemma{lemma:Almost lambda inequality} but only for local parabolic cubes of a finite size $\sim 1$.  Subsequently, we can only obtain local bounds for the $L^p$ norm of  $N[\nabla u]$ restricted to boundary parabolic cubes of size $\sim 1$.
To overcome this we will split our domain into subdomains of finite diameter (in space and time). \medskip

Denote by  \( \Omega_i \coloneqq \Base \times (i,i+1) \)
    and let \( \Delta_i = \partial_p \Omega_i = \partial\Base \times (i,i+1) \) for $i\in\Z$. There exists a nonnegative $C^1$ function $\psi\in C_c^1(\R)$, supp $\psi\subset [-1,2]$ such that  
$$\psi_i(\cdot)=\psi(\cdot+i), \qquad |\psi'|\lesssim 1,\quad\int_{\R}\psi=1,\qquad \sum_{i\in \Z} \psi_i \equiv 1.$$
Clearly,  \( \| D_{1/2}^t \psi \|_{L^r(\R)} \lesssim 1 \),  for each \( 1<r<\infty \).
Set
\[ f_{\Delta_i} \coloneqq \fint_{{\Delta}_{i}} f. \]
    Finally, let  \( \Psi_k \coloneqq \sum_{i \geq k} \psi_i \) and note that \( \psi_i = \Psi_{i+1} - \Psi_{i} \) and 
    $\Psi_k\equiv 1$ for on $[i+2,\infty)$.
    
Let \( w_i \) solve the PDE
\[ \begin{cases}
    Lw_i = 0, &\text{in } \Omega, 
    \\
    w_i = (f-f_{\Delta_i})\psi_i &\text{on } \partial\Omega,
\end{cases} \]
    If \( w \coloneqq \sum_{i \in \Z} w_i \) then  \( v \coloneqq u-w \) solves \( Lv = 0 \)
    with boundary data 
\[ \sum_{i \in \Z} f_{\Delta_i} \psi_i = \sum_{i \in \Z} f_{\Delta_i} (\Psi_{i+1} - \Psi_{i})
    = \sum_{i \in \Z} (f_{\Delta_{i-1}} -f_{\Delta_i}) \Psi_{i}. \]
Thus we can write \( v = \sum_{i \in \Z} v_i \) as where \( v_i \) solves
\[ \begin{cases}
    Lv_i = 0, &\text{in } \Omega, 
    \\
    v_i = (f_{\Delta_{i-1}} -f_{\Delta_i})\Psi_i &\text{on } \partial\Omega.
\end{cases} \]

If follows that we have decomposed the solution $u$ to $Lu=0$ with $u\big|_{\partial\Omega}=f$ into
$$u=\sum_i v_i+\sum_i w_i,$$
such that $w_i\big|_{\partial\Omega}$ is supported only on $\partial\Base \times (i-1,i+2)$ and $v_i\big|_{\partial\Omega}=0$ for $t<i-1$ and $v_i\big|_{\partial\Omega}$ is constant for $t>i+2$.\vglue1mm

Let us also define $\tilde{v}_i$, such that 
\( \tilde{v}_i \) solves
\[ \begin{cases}
    L\tilde{v}_i = 0, &\text{in } \Omega, 
    \\
    \tilde{v}_i = (f_{\Delta_{i-1}} -f_{\Delta_i})\Psi_i (1-\Psi_{i+20})&\text{on } \partial\Omega.
\end{cases} \]
Note that by considering the solution $\tilde{v_i}$ instead of
$v_i$, since $v_i\big|_{\partial\Omega}(P,\tau)=\tilde{v}_i\big|_{\partial\Omega}(P,\tau)$ for $\tau\le i+15$ 
    we have $N[\nabla v_i](P,\tau)=N[\nabla \tilde{v}_i](P,\tau)$ for all $\tau\le i+10$.
However, $\tilde{v_i}$ is more similar to $w_i$ as both are now compactly supported and half derivative of 
$\Psi_i (1-\Psi_{i+20})$ has faster decay than half derivative of $\Psi_i$.
\vglue1mm

Let us study the properties of $N[\nabla v_i]$ and $N[\nabla w_i]$. Without loss of generality, let
$v$ be either $v_i$ or $w_i$ for some $i\in\Z$. It follows that $v\equiv 0$ for $t\le i-1$. Thus
$N[\nabla v](P,\tau)\equiv 0$ for all $\tau\le i-5$.

When $i-5<\tau<i+10$ we may study $N[\nabla v](P,\tau)$ by considering the $(R_L)_p$ boundary value problem. As we have already observed above, \reflemma{lemma:Almost lambda inequality} only holds on scales comparable to the diameter of $\mathcal O$. Thus by modifying the calculation for the unbounded case we can obtain an estimate on a bounded (in time) portion of $\partial\Omega$, namely that

$$\int_{\cup_{k=i-5}^{i+9}\Delta_k} {N}[\nabla v]^p \lesssim \|v\|^p_{\dot{L}^p_{1,1/2}(\partial\Omega)}.$$ 
This calculation differs only slightly to the one we gave in the unbounded case, and is actually more similar to what Shen did in \cite{She06} as he only considered the case of a bounded domain.
Hence we obtain for $w_i$ and $v_i$ (here using $\tilde{v_i}$ as on this portion of the domain they coincide):

\begin{equation}\label{eq59}
\int_{\cup_{k=i-5}^{i+9}\Delta_k} {N}[\nabla w_i]^p \lesssim \|(f-f_{\Delta_i})\psi_i)\|^p_{\dot{L}^p_{1,1/2}(\partial\Omega)},\quad \int_{\cup_{k=i-5}^{i+9}\Delta_k} {N}[\nabla v_i]^p \lesssim |f_{\Delta_{i-1}} -f_{\Delta_i}|^p.
\end{equation}
Here, the second term is already simplified, as we use quietly the fact that $\Psi_i (1-\Psi_{i+20})$ has bounded half time-derivative on $L^p(\partial\Omega)$ for all $p>1$.\vglue1mm

Finally, for $\tau\ge i+10$ we will use the fact that $\nabla v$ decays exponentially. 

\begin{lemma}\label{lemma:Exponential Decay} Let $p\ge 1$ and assume that the Dirichlet problem $(D)_{p'}^*$ 
is solvable. (When $p=1$ we assume that $(D)_{q}^*$ is solvable for some $q>1$).
Let $v$ be a solution to $Lv=0$ and
    suppose that \( v = const. \) on \( \partial\Base \times (i+2,\infty) \).
    There exists an \( \alpha>0 \) such that for all $j>i+9$ we have that
    \begin{align*}
        \| {N}[\nabla v] \|_{L^p(\Delta_j)}
        \lesssim e^{-\alpha|i-j|} \| {N}[\nabla v] \|_{L^p(\Delta_{i+3})}.     
    \end{align*}
\end{lemma}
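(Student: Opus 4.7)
The plan is to exploit the fact that on the bounded spatial domain $\mathcal{O}$, solutions with zero lateral Dirichlet data decay exponentially in time, and then convert this energy-type decay into the claimed $N[\nabla v]$ decay by means of \reftheorem{thm:Reverse Holder}. First I reduce to zero lateral data by setting $\tilde{v}:=v-c$, where $c$ is the constant value of $v$ on $\partial\mathcal{O}\times(i+2,\infty)$; then $L\tilde{v}=0$, $\nabla v=\nabla\tilde v$, and $\tilde{v}=0$ on $\partial\mathcal{O}\times(i+2,\infty)$.

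\textbf{Energy decay.} Multiplying $L\tilde{v}=0$ by $\tilde{v}$ and integrating over $\mathcal{O}$, the lateral boundary terms vanish for $t>i+2$, so uniform ellipticity together with the Poincar\'e inequality on the bounded domain $\mathcal{O}$ (valid since $\tilde v(\cdot,t)\in H^{1}_{0}(\mathcal{O})$ there) yields
\begin{equation*}
\tfrac{d}{dt}\|\tilde{v}(\cdot,t)\|_{L^{2}(\mathcal{O})}^{2}\le-2\alpha_{0}\|\tilde{v}(\cdot,t)\|_{L^{2}(\mathcal{O})}^{2},\qquad t>i+2,
\end{equation*}
for some $\alpha_{0}=\alpha_{0}(L,\mathcal{O})>0$. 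Gronwall together with a separate integration of the energy identity gives, for $t\ge s>i+2$,
\begin{equation*}
\|\tilde{v}(\cdot,t)\|_{L^{2}(\mathcal{O})}\le e^{-\alpha_{0}(t-s)}\|\tilde{v}(\cdot,s)\|_{L^{2}(\mathcal{O})},\qquad\int_{s}^{s+2}\|\nabla\tilde{v}\|_{L^{2}(\mathcal{O})}^{2}\,d\tau\lesssim\|\tilde{v}(\cdot,s)\|_{L^{2}(\mathcal{O})}^{2}.
\end{equation*}

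\textbf{From energy decay to $N[\nabla v]$.} Cover $\Delta_{j}$ by a fixed finite collection of surface balls $\Delta_{k}=\Delta(P_{k},\tau_{k},r_{0})$ with $r_{0}=1/100$ (independent of $j$), chosen so that $80\Delta_{k}\subset\partial\mathcal{O}\times(i+2,\infty)$; this is feasible because $\tau_{k}>j>i+9$ and the time extent of $80\Delta_{k}$ is $\lesssim 80r_{0}^{2}\ll 1$. Since $\tilde v=0$ on $80\Delta_{k}$ and $r_{0}\lesssim\operatorname{diam}\mathcal{O}$, \reftheorem{thm:Reverse Holder} applies and gives $(\fint_{\Delta_{k}}N[\nabla v]^{p})^{1/p}\lesssim\fint_{T(80\Delta_{k})}|\nabla\tilde v|$. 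All the Carleson regions $T(80\Delta_{k})$ lie within $\mathcal{O}\times[j-c,j+1+c]$ for some $c<1$, so Cauchy-Schwarz and the gradient energy bound (applied with $s=i+3$) yield
\begin{equation*}
\|N[\nabla v]\|_{L^{p}(\Delta_{j})}^{p}\lesssim\Big(\int_{\mathcal{O}\times[j-c,j+1+c]}|\nabla\tilde v|^{2}\Big)^{p/2}\lesssim e^{-\alpha_{0}p(j-i)}\|\tilde{v}(\cdot,i+3)\|_{L^{2}(\mathcal{O})}^{p}.
\end{equation*}

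\textbf{Dominating the initial data.} It remains to bound $\|\tilde{v}(\cdot,i+3)\|_{L^{2}(\mathcal{O})}$ by $\|N[\nabla v]\|_{L^{p}(\Delta_{i+3})}$. Monotonicity of $\|\tilde{v}(\cdot,\tau)\|_{L^{2}(\mathcal{O})}^{2}$ on $(i+2,\infty)$ gives $\|\tilde{v}(\cdot,i+3)\|_{L^{2}}^{2}\le\int_{i+2}^{i+3}\|\tilde{v}(\cdot,\tau)\|_{L^{2}}^{2}\,d\tau$, and Poincar\'e converts this to $\lesssim\int_{\mathcal{O}\times[i+2,i+3]}|\nabla v|^{2}$. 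The standard Carleson-cone estimate $\int_{T(\Delta)}|\nabla v|^{2}\lesssim r(\Delta)\int_{\Delta^{*}}N[\nabla v]^{2}\,d\sigma$ (Whitney decomposition and Fubini) combined with H\"older on the unit-measure surface ball $\Delta_{i+3}^{*}$ then gives the $L^{p}$ bound for $p\ge 2$; for $1\le p<2$ one applies \reftheorem{thm:Reverse Holder} on small balls inside $\Delta_{i+3}^{*}$ to upgrade $L^{p}$ to $L^{2}$. The main technical obstacle is the $p<2$ case in this final step, where the natural direction of H\"older is reversed and one must lean on the reverse H\"older estimate; the energy decay itself is classical once the integration by parts is justified by a standard approximation, and the covering by $80\Delta_{k}$ staying inside the zero-boundary region is the only geometric bookkeeping needed.
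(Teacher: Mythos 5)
Your overall strategy matches the paper's: prove $L^2$ energy decay via Gronwall (exactly Proposition \ref{prop:Smooth Exponential Decay}), then convert to $N[\nabla v]$ decay using the reverse H\"older machinery of Theorem \ref{thm:Reverse Holder}. Your route from the energy decay to the $N[\nabla v]$ bound is somewhat cleaner than the paper's: you apply Theorem \ref{thm:Reverse Holder} directly on a fixed finite family of small boundary balls covering $\Delta_j$, whereas the paper first derives a pointwise sup bound for $|v|$ on several time slabs (via Proposition \ref{prop:Sup to L2 bound}), then re-runs the Green's-function argument from Theorem \ref{thm:Reverse Holder} and finishes with boundary Caccioppoli. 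Both are valid; yours avoids the detour through the sup estimate.

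There is, however, a genuine gap in your \textbf{dominating the initial data} step, precisely at the point you flag as the ``main technical obstacle.'' You want $\|\tilde v(\cdot,i+3)\|_{L^2(\mathcal O)}\lesssim\|N[\nabla v]\|_{L^p(\Delta_{i+3})}$, and after Poincar\'e and the $L^2$ Carleson-cone estimate you are left needing $\int_{\Delta_{i+3}^*}N[\nabla v]^2\lesssim\|N[\nabla v]\|_{L^p}^2$. For $p<2$ you propose to ``apply \reftheorem{thm:Reverse Holder} on small balls to upgrade $L^p$ to $L^2$.'' But Theorem \ref{thm:Reverse Holder} at exponent $2$ needs $(D_{L^*})_{2}$ to be solvable, and for $p<2$ (hence $p'>2$) the ambient hypothesis $(D_{L^*})_{p'}$ does \emph{not} imply $(D_{L^*})_{2}$ (higher $B_q$ is stronger, lower is weaker). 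The additional hypothesis $(R_L)_q$ with $q<p$ gives, via the converse implication, $(D_{L^*})_{q'}$ with $q'>p'>2$, which again does not yield $(D_{L^*})_{2}$. So the reverse H\"older inequality for $N[\nabla v]$ at exponent $2$ is not at your disposal, and the self-improving nature of reverse H\"older only buys an $\EPS$, not a jump to $2$.

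The correct repair is what the paper does: stay in $L^1$ from the start. By H\"older on the unit-measure ball, $\|N[\nabla v]\|_{L^1(\Delta_{i+3})}\lesssim\|N[\nabla v]\|_{L^p(\Delta_{i+3})}$ for every $p\ge1$; the $L^1$ Carleson estimate \eqref{zmac} gives $\int_{T(\tilde\Delta)}|\nabla v|\lesssim\|N[\nabla v]\|_{L^1}$; and then the \emph{gradient} reverse H\"older on Carleson regions, Lemma \ref{lemma:Gradient L2toL1} (which uses only interior $L^p\to L^2$ self-improvement and boundary Caccioppoli, no Dirichlet solvability hypothesis), upgrades this $L^1$ gradient bound to an $L^2$ one: $\int_{T(\Delta)}|\nabla v|^2\lesssim\big(\int_{T(2\Delta)}|\nabla v|\big)^2\lesssim\|N[\nabla v]\|_{L^p}^2$. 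After that, a mean-value selection of a good time slice and Poincar\'e give $\|\tilde v(\cdot,\tau)\|_{L^2(\mathcal O)}\lesssim\|N[\nabla v]\|_{L^p(\Delta_{i+3})}$ uniformly in $p\ge1$. Replacing your appeal to Theorem \ref{thm:Reverse Holder} by Lemma \ref{lemma:Gradient L2toL1} in this step closes the gap and makes the argument valid for all $p\ge1$, as stated in the lemma.
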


Hence by \eqref{eq59} we have for all $j>i+9$:
\begin{equation}\label{eq60}
\int_{\Delta_j} {N}[\nabla w_i]^p \lesssim e^{-\alpha p|i-j|}\|(f-f_{\Delta_i})\psi_i)\|^p_{\dot{L}^p_{1,1/2}(\partial\Omega)},\quad \int_{\Delta_j} {N}[\nabla v_i]^p \lesssim e^{-\alpha p|i-j|}|f_{\Delta_{i-1}}-f_{\Delta_i}|^p.
\end{equation}

In order to establish \reflemma{lemma:Exponential Decay} we start by proving the $L^2$ decay. Assume first that the coefficients of $A$ as smooth. 

\begin{proposition}\label{prop:Smooth Exponential Decay}
    Let \( u \) be such that \( {L}u = 0 \) on \( \partial \Base \times [0,\infty) \) and also 
    Moreover suppose that \( u=0 \) on \( \partial \Base \times [0,\infty) \). 
    Then there exists an \( \beta >0 \) such that for \( t>0 \)
    \begin{align}
        \| u(\cdot,t) \|_{L^2(\mathcal{O})} 
        \lesssim e^{-\beta t} \| u(\cdot,0) \|_{L^2(\mathcal{O})} \label{eq:Exponential Decay}.
    \end{align}
\end{proposition}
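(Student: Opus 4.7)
The plan is to prove this by the standard energy method for parabolic equations, taking advantage of the smoothness assumption on $A$ (which guarantees enough regularity of $u$ to carry out the computation rigorously; otherwise we would work with Steklov averages or smooth approximations and pass to the limit).

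First, I would multiply the equation $Lu = \Div(A\nabla u) - \partial_t u = 0$ by $u$ itself and integrate by parts over $\mathcal{O}$ at each fixed time $t>0$. Since $u$ vanishes on $\partial\mathcal{O}$, the boundary terms disappear and we get the classical energy identity
\begin{equation*}
    \frac{1}{2}\frac{d}{dt}\int_{\mathcal{O}} u(X,t)^2 \, dX
    + \int_{\mathcal{O}} A(X,t) \nabla u(X,t) \cdot \nabla u(X,t) \, dX = 0.
\end{equation*}
By uniform ellipticity of $A$ we have $\int_{\mathcal{O}} A\nabla u \cdot \nabla u \geq \lambda \|\nabla u(\cdot,t)\|_{L^2(\mathcal{O})}^2$.

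Next, since $\mathcal{O}$ is bounded (we have rescaled so $\diam(\mathcal{O})=1$) and $u(\cdot,t)$ vanishes on $\partial\mathcal{O}$, the classical Poincaré inequality gives us a constant $C_P = C_P(\mathcal{O})$ with
\begin{equation*}
    \|u(\cdot,t)\|_{L^2(\mathcal{O})}^2 \leq C_P \|\nabla u(\cdot,t)\|_{L^2(\mathcal{O})}^2.
\end{equation*}
Combining these two estimates yields a differential inequality
\begin{equation*}
    \frac{d}{dt} \|u(\cdot,t)\|_{L^2(\mathcal{O})}^2 \leq -2\beta \|u(\cdot,t)\|_{L^2(\mathcal{O})}^2, \qquad \beta := \lambda / C_P.
\end{equation*}
Gronwall's inequality then delivers $\|u(\cdot,t)\|_{L^2(\mathcal{O})}^2 \leq e^{-2\beta t} \|u(\cdot,0)\|_{L^2(\mathcal{O})}^2$, which is \eqref{eq:Exponential Decay}.

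The only mildly delicate point is justifying the differentiation under the integral and the integration by parts at fixed time. Under the smoothness assumption on $A$, classical parabolic regularity theory gives $u \in C^\infty(\overline{\mathcal{O}\times(0,\infty)})$ away from the initial data, so the calculation is legitimate on $(t_0, \infty)$ for any $t_0 > 0$ and then extends to $t_0 = 0$ by continuity of $\|u(\cdot,t)\|_{L^2}^2$ and the monotonicity provided by the energy identity. The general rough-coefficient case (needed for the application to \reflemma{lemma:Exponential Decay}) is handled afterwards by a standard approximation argument, regularizing $A$ by convolution and passing to the limit using the uniform bounds that only depend on $\lambda$ and $C_P(\mathcal{O})$.
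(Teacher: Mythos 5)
Your proof is correct and follows essentially the same energy-method argument as the paper: differentiate $\|u(\cdot,t)\|_{L^2}^2$, use the PDE and ellipticity to bound it by $-\lambda\|\nabla u\|_{L^2}^2$, invoke Poincar\'e (the paper calls this the ``Sobolev inequality'') since $u$ vanishes on $\partial\mathcal{O}$, and close with Gronwall. Your remarks about the regularity needed to justify the computation and the subsequent approximation for rough $A$ likewise match the paper's brief comment that smoothness can be removed by approximation since the estimate depends only on the ellipticity constant.
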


\begin{proof}
    Set \( \eta(t) \coloneqq \| u(\cdot,t) \|_{L^2(\Base)} \).
    For \( t > 0 \)
    \begin{align*}
        2\eta(t)\eta'(t) &= \partial_t (\eta(t)^2) = 2\int_{\Omega_{t}} u\partial_t u
        = - 2 \int_{\Omega_{t}} A \nabla u \cdot \nabla u
        \leq -2\lambda \int_{\Omega_t} |\nabla u|^2, 
    \end{align*}
    where $\Omega_t=\mathcal O\times\{t\}$.
    Since \( u=0 \) on \( \partial\Omega_t \), applying the Sobolev inequality on $\Omega_t$ 
        we have that 
    \begin{align*}
        -\int_{\Omega_t} |\nabla u|^2 
        \lesssim - \int_{\Omega_t} |u|^2
        = -\eta(t)^2.
    \end{align*}
    Hence \( \eta'(t) \lesssim -\eta(t) \)
        and we may apply Gronwall's inequality to deduce \eqref{eq:Exponential Decay}.
\end{proof}

Observe that this calculation holds on very general domains $\mathcal O$ as it only requires the Sobolev inequality. The assumption that $A$ is smooth can be removed by approximation as the estimate we are seeking only depends on the ellipticity constant of the operator $A$.\vglue1mm

Assume now that $\| {N}[\nabla v] \|_{L^p(\Delta_{i+3})}<\infty$ and that $v\big|_{\partial\Omega}\equiv0$ for all times $\ge i+2$.
Clearly, then also 
$$\| {N}[\nabla v] \|_{L^1(\Delta_{i+3})}\lesssim \| {N}[\nabla v] \|_{L^p(\Delta_{i+3})}<\infty,$$
and thus by \eqref{zmac}

$$\int_{\Omega_{i+3}}|\nabla v|\lesssim \| {N}[\nabla v] \|_{L^1(\Delta_{i+3})}\lesssim \| {N}[\nabla v] \|_{L^p(\Delta_{i+3})}.$$
By applying \reflemma{lemma:Gradient L2toL1} as we satisfy 
$v\big|_{\partial\Omega}(P,\tau)=0$ for $\tau>i+2$ we obtain that for some $\tau\in (i+3,i+4)$
$$\left(\int_{\mathcal O\times\{\tau\}}|\nabla v|^2\right)^{1/2}\lesssim \| {N}[\nabla v] \|_{L^p(\Delta_{i+3})}<\infty,$$
and finally by Sobolev (given that $v$ vanishes on the lateral boundary of $\Omega_{i+3}$): 
$$\left(\int_{\mathcal O\times\{\tau\}}|v|^2\right)^{1/2}\lesssim \| {N}[\nabla v] \|_{L^p(\Delta_{i+3})}<\infty.$$

Fix now some $j>i+9$. We propagate the $L^2$ initial data given on $\mathcal O\times\{\tau\}$ using  \refproposition{prop:Smooth Exponential Decay} to $t\in (j+7,j+12)$. It follows that for all such $t$ we have 

$$\| v(\cdot,t) \|_{L^2(\mathcal{O})} \lesssim e^{-\beta(j-i-3)} \| v(\cdot,\tau) \|_{L^2(\mathcal{O})}
 \lesssim e^{-\beta |i-j|} \| {N}[\nabla v] \|_{L^p(\Delta_{i+3})}<\infty.$$
By integrating over the interval $(j+7,j+12)$ and then applying \refproposition{prop:Sup to L2 bound} 
\begin{equation}\label{zmez}
\sup_{\Omega_{j+8}\cup\Omega_{j+9}\cup\Omega_{j+10}}|v|\lesssim e^{-\beta |i-j|} \| {N}[\nabla v] \|_{L^p(\Delta_{i+3})}<\infty.
\end{equation}

Finally, we repeat the argument we gave in the proof of \reftheorem{thm:Reverse Holder} where we estimated 
${N}^r[\nabla v](P,\tau)$ by ${N}_{\sup}^r[v/\delta](P,\tau)$ and then using the Green's function eventually we have obtained (for $r=1$ as the diameter of the domain is $1$):
\begin{align*}
    \Big( \fint_{\Delta_{j+9}} {N}[\nabla v]^p \Big)^{1/p}
    &\lesssim  \fint_{\Omega\cap (j+8.5,j+10.5)} |\nabla v|.
\end{align*}
The final step is to estimate the above righthand side by the lefthand side of \eqref{zmez} which can be done thanks to \refproposition{prop:Boundary Cacciopolli} (boundary Cacciopolli) which finally gives us that
\begin{align*}
    \Big( \int_{\Delta_{j+9}} {N}[\nabla v]^p \Big)^{1/p}
    &\lesssim  e^{-\beta |i-j|} \| {N}[\nabla v] \|_{L^p(\Delta_{i+3})}<\infty.
\end{align*}
Form this the claim of \reflemma{lemma:Exponential Decay} follows by applying the above estimate to 
the function $v-const.$\qed\medskip

We are ready to add up the estimates to obtain a bound for the $L^p$ norm of $u$. Fix $i\in \Z$
and consider $N[\nabla u]$ on $\Delta_i$. Since $N$ is sub-additive, clearly
$$N[\nabla u](P,\tau)\le \sum_j \left[N[\nabla v_j]+N[\nabla w_i]\right](P,\tau),$$ 
and therefore 
$$\|N[\nabla u]\|_{L^p(\Delta_i)}\le \sum_j \left[\|N[\nabla v_j]\|_{L^p(\Delta_i)}+\|N[\nabla w_j]\|_{L^p(\Delta_i)}\right]$$ 
$$\le \sum_{j=i-9}^{i+5}\left[\|N[\nabla v_j]\|_{L^p(\Delta_i)}+\|N[\nabla w_j]\|_{L^p(\Delta_i)}\right]+\sum_{j<i-9}\left[\|N[\nabla v_j]\|_{L^p(\Delta_i)}+\|N[\nabla w_j]\|_{L^p(\Delta_i)}\right]$$
$$\hspace{-7.5cm}\lesssim \sum_{j=i-9}^{i+5}\left[\|(f-f_{\Delta_j})\psi_j)\|_{\dot{L}^p_{1,1/2}}+|f_{\Delta_{j-1}}-f_{\Delta_j}|\right]$$
$$+\sum_{j<i-9}e^{-\alpha|i-j|}\left[\|(f-f_{\Delta_j})\psi_j)\|_{\dot{L}^p_{1,1/2}}+|f_{\Delta_{j-1}}-f_{\Delta_j}|\right],
$$
using \eqref{eq59} and \eqref{eq60}. We raise both sides to the $p$-th power. It follows that
$$\hspace{-1.5cm}\|N[\nabla u]\|^p_{L^p(\Delta_i)}\lesssim \sum_{j=i-9}^{i+5}\left[\|(f-f_{\Delta_j})\psi_j)\|^p_{\dot{L}^p_{1,1/2}}+|f_{\Delta_{j-1}}-f_{\Delta_j}|^p\right]$$
$$\qquad\qquad\qquad\qquad+\left(\sum_{j<i-9}e^{-\alpha|i-j|}\left[\|(f-f_{\Delta_j})\psi_j)\|_{\dot{L}^p_{1,1/2}}+|f_{\Delta_{j-1}}-f_{\Delta_j}|\right]\right)^p.$$
For the last term we use H\"older's inequality for series. We split the exponential into two terms
$$e^{-\alpha|i-j|}=e^{-\alpha|i-j|/p'}e^{-\alpha|i-j|/p}$$
to obtain
$$\left(\sum_{j<i-9}e^{-\alpha|i-j|}\left[\|(f-f_{\Delta_j})\psi_j)\|_{\dot{L}^p_{1,1/2}}+|f_{\Delta_{j-1}}-f_{\psi_j}|\right]\right)^p\le \left(\sum_{j<i-9} \left(e^{-\alpha|i-j|/p'}\right)^{p'}\right)^{p/p'}\times$$
$$\qquad\qquad\qquad\sum_{j<i-9}e^{-\alpha|i-j|}\left[\|(f-f_{\Delta_j})\psi_j)\|_{\dot{L}^p_{1,1/2}}+|f_{\Delta_{j-1}}-f_{\Delta_j}|\right]^p$$
$$\le C(p,\alpha)\sum_{j<i-9}e^{-\alpha|i-j|} \left[\|(f-f_{\Delta_j})\psi_j)\|^p_{\dot{L}^p_{1,1/2}}+|f_{\Delta_{j-1}}-f_{\Delta_j}|^p\right].$$

Now we sum over all indices $i\in \Z$. Since 
$\|N[\nabla u]\|^p_{L^p(\partial\Omega)}= \sum_i \|N[\nabla u]\|^p_{L^p(\Delta_i)}$ we obtain after rearranging the summation order:
\begin{equation}
\|N[\nabla u]\|^p_{L^p(\partial\Omega)}\lesssim \sum_{i\in\Z} (1+\sum_{k>9}e^{-\alpha k})\left[\|(f-f_{\Delta_i})\psi_i)\|^p_{\dot{L}^p_{1,1/2}(\partial\Omega)}+|f_{\Delta_{i-1}}-f_{\Delta_i}|^p\right]
\end{equation}
$$\qquad\qquad\lesssim \sum_{i\in\Z}[\|(f-f_{\Delta_i})\psi_i)\|^p_{\dot{L}^p_{1,1/2}(\partial\Omega)}+ \sum_{i\in\Z} |f_{\Delta_{i-1}}-f_{\Delta_i}|^p.$$

We are nearly done. Recall \refproposition{prop:Product_Rule} which gives estimate for norm of the term 
$(f-f_{\Delta_i})\psi_i$.  It follows that 
\begin{equation}\label{twot}
\|N[\nabla u]\|^p_{L^p(\partial\Omega)}\lesssim \sum_{i\in\Z} |f_{\Delta_{i-1}}-f_{\Delta_i}|^p +\|g\|^p_{L^p(\partial\Omega)}+ \|h\|^p_{L^p(\partial\Omega)},
\end{equation}
where for functions $g\in \mathcal D_s(f),\,h$ we have used the finite overlap of the balls $32\Delta_i$ which allows up to claim that
$$\sum_{i\in \Z}\|g\|^p_{L^p(32\Delta_i)}\lesssim \|g\|^p_{L^p(\partial\Omega)}.$$

For both $g$ and $h$ we have bounds on their $L^p$ norms by $\|f\|_{\dot{L}^p_{1,1/2}(\partial\Omega)}$, provided $g$ is chosen correctly. Thus \eqref{twot} only requires bound on the first term on the righthand side.\medskip

We now wish to show
\begin{align}\label{eq:fsharp}
    |f_{\Delta_{i-1}} - f_{\Delta_i}| \lesssim  \|f^\#\|_{L^p(\Tilde{\Delta}_i)},
\end{align}
where $\Tilde{\Delta}_i$ is an enlargement of $\Delta_i$ and $f^\#$ is the sharp maximal function of $f$ defined by \eqref{sharp}. Recall that we have established that $\|f^\#\|_{L^p(\partial\Omega)}\le C\|f\|_{\dot{L}^p_{1,1/2}(\partial\Omega)}$.

We have by \eqref{m22}:
$$\left|f(X,t)\chi_{\Delta_{i-1}}(X,t)-(\fint_{\Delta_i} f)\chi_{\Delta_{i-1}}(X,t) \right| \lesssim \left(f^\sharp(X,t)+\fint_Bf^\sharp\right)\chi_{\Delta_{i-1}}(X,t),$$
for $B=\Delta_{i-1}\cup\Delta_i$. Integrating over $\partial\Omega$ yields

$$|f_{\Delta_{i-1}}-f_{\Delta_i}|\le C\fint_Bf^\sharp.$$
Since the volume of $B$ is $O(1)$ the average can be replaced by $\|f^\sharp\|_{L^1(\tilde{\Delta_i})}$. From this an estimate analogous to \eqref{eq:fsharp} follows. In summary, we have for \eqref{twot} that
\begin{equation}\label{twotz}
\|N[\nabla u]\|_{L^p(\partial\Omega)}\lesssim \|f\|_{\dot{L}^p_{1,1/2}(\partial\Omega)},
\end{equation}
as desired. Hence the bounded case is also established and \reftheorem{thm:Dp_implies_Rq} holds.
\qed\medskip

\section{Interpolation and \reftheorem{thm:Rp_extrapolation}.}\label{S:interpolation}
\subsection{Atomic spaces $\dot{\HS}^{1,(\beta)}_{1,1/2}(\R^n)$.}

We plan to define natural end-point spaces for $p=1$ which interpolates well 
    with the homogeneous parabolic Sobolev spaces $\dot{L}^p_{1,1/2}(\R^n)$ defined above for $p>1$. 
It follows that we aim to define spaces that respect parabolic homogeneity. Thus for an \lq\lq atom" $a$ we expect $\nabla a$ to satisfy the usual condition defining Hardy-Sobolev space with one derivative. However in the $t$-variable we only expect to have a $1/2$-time derivative. We borrow lots of ideas from \cite{BB} (see Definition 2.16 there).

\begin{definition}\label{HSato} Let $1<\beta<\infty$. We say that a function $b:\R^n\to\R$ is a homogeneous $(1,1/2,\beta)$-atom associated to a parabolic ball $B\subset\R^n$ of radius $r=r(B)$ if 
\begin{itemize}
\item[(i)] $b$ is supported in the ball $B$,
\item[(ii)] $\|\nabla b\|_{L^\beta(\R^n)}+\|D^{1/2}_t b\|_{L^\beta(\R^n)}\le |B|^{-1/\beta'}$,
\item[(iii)] $\|b\|_{L^1(\R^n)}\le r(B)$.
\end{itemize}
Here $|B|$ denotes the usual $n$-dimensional Lebesgue measure on $\R^n$.
When $p=\infty$ we modify (ii) and replace it by $\|\nabla b\|_{L^\infty(\R^n)}+\|D^{1/2}_t b\|_{\BMO(\R^n)}\le |B|^{-1}.$
\vglue2mm

We say that a locally integrable function $f$ belongs to the homogeneous parabolic atomic Hardy-Sobolev space
$\dot{\HS}^{1,(\beta)}_{1,1/2}(\R^n)$ if there exists a family of locally integrable homogeneous $(1,1/2,\beta)$-atoms $(b_i)_i$ such that $f=\sum_i\lambda_ib_i$, with $\sum_i|\lambda_i|<\infty$. We equip the space 
$\dot{\HS}^{1,(\beta)}_{1,1/2}(\R^n)$ with the norm:
$$\|f\|_{\dot{\HS}^{1,(\beta)}_{1,1/2}}=\inf_{(\lambda_i)_i}\sum_i|\lambda_i|.$$
\end{definition}

The condition (iii) is not strictly necessary as it follows from the conditions (i) and (ii) and Poincar\'e's inequality in spatial gradient $\nabla b$.  For each fixed $t\in\R$ let $\Delta_t=B\cap \{t=const\}$ be the \lq\lq ball" in spatial variables $x\in\R^{n-1}$. Since $b$ vanishes on $\partial\Delta_t$ we have that
$$\|b\|_{L^1(\cdot,t)}=\int_{\Delta_t}|b|dx\le r\int_{\Delta_t}|\nabla b|\,dx.$$
Integrating in $t$ we then get
$$\|b\|_{L^1(\R^n)}\le r\int_{t\in\R}\int_{\Delta_t}|\nabla b|\,dx\,dt=r\int_{B}|\nabla b|\le r\|\nabla b\|_{L^\beta}|B|^{1/\beta'},$$
by H\"older. From this using (ii) we see that (iii) follows.\vglue2mm

In the definition above the condition $(ii)$ seems to be \lq\lq global" as the $L^\beta$ norm is calculated over whole space. However, clearly supp $\nabla b\subset B$. We also claim that $D^{1/2}_t b$ is \lq\lq essentially" localised to a ball $2B=B(X,2r)$ and decaying away from the support of $B$. We have the following:

\begin{lemma}\label{lhalf} For an $(1,1/2,\beta)$-atom $b$ supported on $B=B((x,t),r)$ we have for $Y=(y,s)$:
\begin{itemize}
\item[(i)] $D^{1/2}_t b(y,s)=0$ if $|x-y|\ge r$.
\item[(ii)] If $|x-y|<r$ and $|t-s|>4r^2$ then
$$|D^{1/2}_t b(y,s)|\le C|t-s|^{-3/2}\|b\|_{L^1(B\cap\{y=const\})}.$$
\end{itemize}
\end{lemma}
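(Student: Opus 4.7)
\medskip

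\textbf{Proof plan.} Both parts rest on the pointwise integral representation
\[
D^{1/2}_t b(y,s) \;=\; c\int_{\R}\frac{b(y,s)-b(y,\tau)}{|s-\tau|^{3/2}}\,d\tau
\]
together with the geometry of the parabolic ball $B=B((x,t),r)$. The key observation to keep in mind is that, by the defining equation \eqref{E:par-norm} of the parabolic norm, any point $(y,\tau)\in B$ must satisfy both $|y-x|<r$ and $|t-\tau|<r^2$, since if the corresponding $\rho$ solving $|y-x|^2/\rho^2+|t-\tau|^2/\rho^4=1$ lies in $(0,r)$ then necessarily $|y-x|<\rho<r$ and $|t-\tau|<\rho^2<r^2$.

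For part (i), the plan is simply to note that if $|x-y|\geq r$ then by the above the entire vertical fiber $\{(y,\tau):\tau\in\R\}$ misses the support of $b$. Hence $b(y,\tau)=0$ for every $\tau$, the integrand vanishes identically, and $D^{1/2}_tb(y,s)=0$.

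For part (ii), I would first use the hypothesis $|t-s|>4r^2$ together with $|x-y|<r$ to conclude that $(y,s)\notin B$, so $b(y,s)=0$ and only the $b(y,\tau)$ term survives in the integrand. Next, I would restrict the $\tau$-integration to the fiber support $\{\tau:(y,\tau)\in B\}$, which as noted above forces $|t-\tau|\leq r^2$. Combined with $|t-s|>4r^2$, the triangle inequality then yields the two-sided comparison
\[
\tfrac{3}{4}|t-s|\;\le\;|s-\tau|\;\le\;\tfrac{5}{4}|t-s|
\]
uniformly over this fiber support. Pulling the essentially constant factor $|s-\tau|^{-3/2}$ outside the integral produces
\[
|D^{1/2}_tb(y,s)|\;\lesssim\;|t-s|^{-3/2}\int_{\{\tau:(y,\tau)\in B\}}|b(y,\tau)|\,d\tau\;=\;C|t-s|^{-3/2}\|b\|_{L^1(B\cap\{y=\text{const}\})},
\]
which is the claimed bound.

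Neither step poses any real obstacle: both reduce to a direct inspection of the support of $b$ via the parabolic norm, followed by elementary manipulation of the kernel $|s-\tau|^{-3/2}$. The only point worth care is that the implicit relation \eqref{E:par-norm} cleanly decouples into the two coordinate bounds $|y-x|<\rho$ and $|t-\tau|<\rho^2$, so that the ``spatial'' and ``temporal'' confinements of $\mathrm{supp}\,b$ can be used independently in parts (i) and (ii) respectively.
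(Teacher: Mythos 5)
Your proof is correct and follows essentially the same route as the paper: write $D^{1/2}_t b$ via the kernel $|s-\tau|^{-3/2}$, use that the support of $b$ confines the fiber integration to $|t-\tau|<r^2$, and then use $|t-s|>4r^2$ to replace $|s-\tau|$ by $|t-s|$ up to constants. Your explicit observation that $(y,s)\notin B$ forces $b(y,s)=0$ is implicit in the paper's bound $|D^{1/2}_tb(y,s)|\le c_1\int |b(y,\tau)|\,|s-\tau|^{-3/2}\,d\tau$, so the two arguments are the same.
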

\begin{proof}
Using the definition of the half-derivative we have:
$$|D^{1/2}_t b(y,s)|=\left|c_1\int_{\tau\in\R}\frac{b(y,s)-b(y,\tau)}{|s-\tau|^{3/2}}d\tau\right|\le  c_1\int_{\tau\in\R}\frac{|b(y,\tau)|}{|s-\tau|^{3/2}}d\tau.$$
Given that $b$ is only supported in $B$, only $\tau\in (t-r^2,t+r^2)$ contribute to the integral above. Since  for such $\tau$: $|s-\tau|\approx |s-t|$ we get that
$$|D^{1/2}_t b(y,s)|\lesssim |t-s|^{-3/2}\|b\|_{L^1(Q\cap\{y=const\})}.$$
\end{proof}
Our aim is to prove that if $b$ is a $(1,1/2,\beta)$-atom then $b\in \dot{W}^{1}_{1,1/2}(\R^n)$ with $\dot{W}^{1}_{1,1/2}(\R^n)$ norm $\lesssim 1$. It then would follow that for every $\beta>1$ we have that
\begin{equation}\label{m22b}
\dot{\HS}^{1,(\beta)}_{1,1/2}(\R^n)\subset \dot{W}^1_{1,1/2}(\R^n)\qquad\mbox{and}\qquad \|f\|_{\dot{W}_{1,1/2}^{1}(\R^n)}\lesssim \|f\|_{\dot{\HS}^{1,(\beta)}_{1,1/2}(\R^n)}.
\end{equation}

Consider therefore and arbitrary $(1,1/2,\beta)$-atom $b$ supported on $B$ centred at $(x,t)$. By H\"older we have that
$$\|\nabla b\|_{L^1(\R^n)}+\|D^{1/2}_t b\|_{L^1(2B)}=\|\nabla b\|_{L^1(2B)}+\|D^{1/2}_t b\|_{L^1(2B)}$$
$$\le [\|\nabla b\|_{L^\beta(2B)}+\|D^{1/2}_t b\|_{L^\beta(2B)}]|2B|^{1/\beta'}.$$
Using the fact that we have a doubling measure and condition (ii) in the definition of the atom the claim
$$\|\nabla b\|_{L^1(\R^n)}+\|D^{1/2}_t b\|_{L^1(2B)}\lesssim 1$$
follows. It remains to estimate $D^{1/2}_t b$ away from the ball $2B$. For this we use \reflemma{lhalf}. By (i) and (ii):
\begin{equation}
\|D^{1/2}_t b\|_{L^1(\R^n\setminus 2B)}\le \int_{s\in\R:\, |t-s|>4r^2}\int_{|y-x|<r}|D^t_{1/2}b(y,s)|dy\,ds
\end{equation}
$$\le  \int_{s\in\R:\, |t-s|>4r^2} |t-s|^{-3/2} \int_{|y-x|<r}\|b\|_{L^1(B\cap \{ (y,\cdot) \} )}dy\, ds.$$
The interior integral in $y$ will give us $\|b\|_{L^1(B)}$ thus yielding:
\begin{equation}
\|D^{1/2}_t b\|_{L^1(\R^n\setminus 2B)}\le C\|b\|_{L^1(B)}\int_{4r^2}^\infty \tau^{-3/2}d\tau\approx r^{-1}\|b\|_{L^1(B)}\le 1,
\end{equation}
where in the final step we have used property (iii) of the definition of an atom $b$. The calculation of the $\BMO$ norm of $D^{1/2}_tb$ is analogous for $\beta=\infty$.

\begin{lemma}\label{l22} When $1<\beta\le\alpha\le\infty$ an $(1,1/2,\alpha)$-atom $b$ is a multiple of an $(1,1/2,\beta)$-atom $b$
and hence
\begin{equation}\label{m22c}
\dot{\HS}^{1,(\infty)}_{1,1/2}(\R^n)\subset \dot{\HS}^{1,(\alpha)}_{1,1/2}(\R^n)\subset \dot{\HS}^{1,(\beta)}_{1,1/2}(\R^n).
\end{equation}
Moreover, for each $\beta>1$ the space $\dot{\HS}^{1,(\beta)}_{1,1/2}(\R^n)$ is a Banach space.
\end{lemma}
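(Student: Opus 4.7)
The plan is to prove the two assertions in turn. For the first, fix $1<\beta\le\alpha\le\infty$ and let $b$ be an $(1,1/2,\alpha)$-atom supported in a parabolic ball $B$ of radius $r$, so that $|B|\sim r^{n+1}$. The goal is to verify conditions (i)--(iii) of Definition \ref{HSato} for the $(1,1/2,\beta)$-case, up to a universal constant $C=C(n,\alpha,\beta)$. Property (i) is automatic, and (iii) is preserved verbatim. For the spatial gradient in (ii), since $\nabla b$ is supported in $B$, Hölder's inequality yields
\[\|\nabla b\|_{L^\beta(\R^n)}=\|\nabla b\|_{L^\beta(B)}\le\|\nabla b\|_{L^\alpha(B)}|B|^{1/\beta-1/\alpha}\le|B|^{-1/\alpha'+1/\beta-1/\alpha}=|B|^{-1/\beta'}.\]

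The delicate point is the corresponding bound for $D^{1/2}_tb$, which (unlike $\nabla b$) is not supported in $B$. I would split $\|D^{1/2}_tb\|_{L^\beta(\R^n)}$ into the pieces on $2B$ and on $\R^n\setminus2B$. On $2B$ the same Hölder argument as above produces the desired bound $\lesssim|B|^{-1/\beta'}$. On the tail $\R^n\setminus2B$ I would invoke the pointwise decay from Lemma \ref{lhalf}: there $D^{1/2}_tb(y,s)=0$ unless $|x-y|<r$, and when $|t-s|>4r^2$ we have $|D^{1/2}_tb(y,s)|\lesssim|t-s|^{-3/2}\|b\|_{L^1(B\cap\{y=\mathrm{const}\})}$. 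Raising to the $\beta$-th power and integrating in $s$ first gives a factor $\sim r^{2-3\beta}$ (finite since $\beta>1>2/3$). The remaining integral in $y$ is then dominated by using slice-wise Hölder $\|b\|_{L^1(B\cap\{y\})}\lesssim r^{2/\alpha'}\|b(y,\cdot)\|_{L^\alpha(\R)}$ followed by Hölder in $y$, ending up bounded in terms of $\|b\|_{L^\alpha(B)}$. That quantity is in turn controlled by $r\|\nabla b\|_{L^\alpha(B)}\le r|B|^{-1/\alpha'}$ via the spatial Poincaré inequality used to derive (iii). Carefully collecting powers of $r$ (the dependence on $\alpha$ cancels because $1/\alpha+1/\alpha'=1$), the exponents consolidate to exactly $r^{-(n+1)(\beta-1)/\beta}\sim|B|^{-1/\beta'}$, as required. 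The inclusions \eqref{m22c} then follow at once: any $\dot{\HS}^{1,(\alpha)}$-decomposition $f=\sum\lambda_ib_i$ is, after rescaling atoms by the universal constant $C$, a $\dot{\HS}^{1,(\beta)}$-decomposition, and $\|f\|_{\dot{\HS}^{1,(\beta)}_{1,1/2}}\le C\|f\|_{\dot{\HS}^{1,(\alpha)}_{1,1/2}}$.

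For the Banach-space claim, the triangle inequality and positive homogeneity follow straight from the definition of the norm as an infimum of $\ell^1$ sums over atomic decompositions: concatenating decompositions of $f$ and $g$ gives one for $f+g$. Non-degeneracy (modulo constants) uses the already established embedding \eqref{m22b} into $\dot W^{1}_{1,1/2}(\R^n)$: if $\|f\|_{\dot{\HS}^{1,(\beta)}_{1,1/2}}=0$ then $f$ is constant. Completeness I would prove by the standard atomic argument: given a Cauchy sequence $(f_k)$ in $\dot{\HS}^{1,(\beta)}_{1,1/2}$, extract a subsequence (also called $f_k$) with $\|f_{k+1}-f_k\|<2^{-k}$, pick atomic decompositions $f_{k+1}-f_k=\sum_j\lambda_{k,j}b_{k,j}$ with $\sum_j|\lambda_{k,j}|<2^{-k+1}$, and concatenate all these atoms. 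The resulting series has absolutely summable coefficients, so it defines some $g\in\dot{\HS}^{1,(\beta)}_{1,1/2}$; via the embedding \eqref{m22b} into the Banach space $\dot W^{1}_{1,1/2}(\R^n)$ one identifies $g$ with the limit of $f_k-f_1$, completing the argument.

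The main obstacle is the tail estimate for $D^{1/2}_tb$ on $\R^n\setminus 2B$: because the half-time derivative is a nonlocal object, I cannot simply use Hölder on its support, and instead must combine the pointwise Lemma \ref{lhalf} decay with slice-wise Hölder and a Poincaré-type bound for $b$ itself, then track the powers of $r$ precisely enough that the dependence on $\alpha$ disappears and the bound $|B|^{-1/\beta'}$ emerges on the nose.
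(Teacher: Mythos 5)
Your proof is essentially the paper's proof, with the same structure: Hölder on the inner piece to reduce $L^\alpha$ to $L^\beta$, the decay of $D^{1/2}_tb$ from \reflemma{lhalf} combined with slice-wise Hölder and Poincar\'e for the tail on $\R^n\setminus 2B$, and the standard atomic argument (via the embedding \eqref{m22b} into $\dot{W}^1_{1,1/2}$) for completeness. Two small remarks. First, for the endpoint $\alpha=\infty$ condition (ii) is stated with $\|D^{1/2}_tb\|_{\BMO}\le|B|^{-1}$ rather than an $L^\infty$ bound, so your Hölder step on $2B$ does not apply as written; the paper handles this by invoking the local John--Nirenberg embedding $\BMO\subset L^\beta_{\mathrm{loc}}$, and you should say the same. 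Second, in the tail estimate the paper passes from $L^1$ directly to $L^\beta$ in the $t$-slices and then applies Poincar\'e in $L^\beta$, which avoids the extra Hölder-in-$y$ step in your route through $L^\alpha$; both give the exponent $-(n+1)(\beta-1)$, so either works, but the paper's version is a shade simpler.
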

\begin{proof} Only condition (ii) differs for different values of $\beta$. A calculation similar to the one we have performed above implies that 
$$\|\nabla b\|_{L^\beta(\R^n)}+\|D^{1/2}_t b\|_{L^\beta(2B)}\le [\|\nabla b\|_{L^{\alpha}(\R^n)}+\|D^{1/2}_t b\|_{L^{\alpha}(2B)}]|2B|^{1/\beta-1/\alpha}.$$
This also holds for $\alpha=\infty$ replacing the $L^\infty$ norm by $\BMO$ as $\BMO\subset L^\beta$ for $\beta<\infty$.
Assuming that $b$ is a $(1,1/2,\alpha)$-atom by (ii) of the definition of an atom an using doubling we get that
$$\|\nabla b\|_{L^\beta(\R^n)}+\|D^{1/2}_t b\|_{L^\beta(2B)}\le C_{n+1}^{1/\beta-1/\alpha}|B|^{1/\beta-1}=C_{n+1}^{1/\beta-1/\alpha} |B|^{-1/\beta'}.$$
Here $C_{n+1}$ is the doubling constant. As before $\|D^{1/2}_t b\|_{L^\beta(\R^n\setminus 2B)}$ requires a separate treatment. We again use \reflemma{lhalf}. By (i) and (ii):
\begin{equation}
\|D^{1/2}_t b\|^\beta_{L^\beta(\R^n\setminus 2B)}\le \int_{s\in\R:\, |t-s|>4r^2}\int_{|y-x|<r}|D^t_{1/2}b(y,s)|^\beta dy\,ds
\end{equation}
$$\le  \int_{s\in\R:\, |t-s|>4r^2} |t-s|^{-3\beta/2} \int_{|y-x|<r}\|b\|^\beta_{L^1(B\cap \{(y,\cdot)\})}dy\, ds.$$
By H\"older
$$\|b\|^\beta_{L^1(B\cap \{(y,\cdot)\})}\le \|b\|^\beta_{L^\beta(B\cap \{(y,\cdot)\})}r^{2(\beta-1)}.$$

Hence
\begin{equation}
\|D^{1/2}_t b\|^\beta_{L^\beta(\R^n\setminus 2B)}\le C\|b\|^\beta_{L^\beta(B)}r^{2(\beta-1)}\int_{4r^2}^\infty \tau^{-3\beta/2}d\tau =C\|b\|^\beta_{L^\beta(B)}r^{-\beta}.
\end{equation}
It follows that 
$$\|D^{1/2}_t b\|_{L^\beta(\R^n\setminus 2B)}\le Cr^{-1}\|b\|_{L^\beta(B)}.$$
As before using the (i) and (ii) we can obtain $\|b\|_{L^\beta}\le Cr\|\nabla b\|_{L^\beta(B)}$ and hence
$$\|\nabla b\|_{L^\beta(\R^n)}+\|D^{1/2}_t b\|_{L^\beta(\R^n)}\le C |B|^{-1/\beta'}.$$
From this it follows that $\frac{b}{C}$ is a $(1,1/2,\beta)$-atom. The proof that the defined space is Banach is trivial 
(c.f. \cite{BB} for similar proofs).
\end{proof}

We now consider the real interpolation between the spaces $\dot{\HS}^{1,(\beta)}_{1,1/2}(\R^n)$ and $\dot{W}^\infty_{1,1/2}(\R^n)$, where \eqref{W} defines the space $\dot{W}^\infty_{1,1/2}$. Recall som classical results on $K$-method of real interpolation from \cite{BS, BL}.

Let $A_{0}$, $A_{1}$ be  two normed vector spaces embedded in a topological Hausdorff vector space $V$. For each  $a\in A_{0}+A_{1}$ and $t>0$, we define the $K$-functional of real interpolation by
$$
K(a,t,A_{0},A_{1})=\displaystyle \inf_{a
=a_{0}+a_{1}}(\| a_{0}\|_{A_{0}}+t\|
a_{1}\|_{A_{1}}).
$$

For $0<\theta< 1$, $1\leq q\leq \infty$, we denote by $(A_{0},A_{1})_{\theta,q}$ the real interpolation space between $A_{0}$ and $A_{1}$ defined as
\begin{displaymath}
    (A_{0},A_{1})_{\theta,q}=\left\lbrace a \in A_{0}+A_{1}:\|a\|_{\theta,q}=\left(\int_{0}^{\infty}(t^{-\theta}K(a,t,A_{0},A_{1}))^{q}\,\frac{dt}{t}\right)^{\frac{1}{q}}<\infty\right\rbrace.
\end{displaymath}
It is an exact interpolation space of exponent $\theta$ between $A_{0}$ and $A_{1}$ (see \cite{BL}, Chapter II).
\begin{definition}
Let $f$  be a measurable function on a measure space $(X,\mu)$. The decreasing rearrangement of $f$ is the function $f^{*}$ defined for every $t\geq 0$ by
$$
f^{*}(t)=\inf \left\lbrace\lambda :\, \mu (\left\lbrace x:\,|f(x)|>\lambda\right\rbrace)\leq
t\right\rbrace.
$$
The maximal decreasing rearrangement of
$f$ is the function $f^{**}$ defined for every $t>0$ by
$$
f^{**}(t)=\frac{1}{t}\int_{0}^{t}f^{*}(s) ds.
$$
\end{definition}
From the properties of $f^{**}$ we mention:
\begin{itemize}
\item[1.] $(f+g)^{**}\leq f^{**}+g^{**}$.
\item[2.] $({M}f)^{*}\sim f^{**}$.
\item[3.] $\mu(\left\lbrace x;\,|f(x)|>f^{*}(t)\right\rbrace )\leq t$.
\item[4.] $\forall 1< p\leq\infty$, $\|f^{**}\|_{p}\sim\|f\|_{p}$.
\end{itemize}

We exactly know the functional $K$ for Lebesgue spaces~:
\begin{proposition}\label{KmethodL} Take $0<p_0<p_1\leq \infty$. We have~:
$$K(f,t,L^{p_0},L^{p_1}) \simeq \left(\int_0^{t^{\alpha}} \left[f^{*}(s)\right]^{p_0} ds \right)^{1/p_0} + t \left(\int_{t^{\alpha}}^\infty \left[f^{*}(s)\right]^{p_1} ds \right)^{1/p_1},$$
where $\frac{1}{\alpha}=\frac{1}{p_0}-\frac{1}{p_1}$.
\end{proposition}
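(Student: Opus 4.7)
The statement is the classical Holmstedt reiteration/formula for the $K$-functional of a Lebesgue couple $(L^{p_0},L^{p_1})$. My plan is to prove both the upper bound ($\lesssim$) and lower bound ($\gtrsim$) separately, using the standard truncation decomposition on the rearrangement side.

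\textbf{Upper bound.} Fix $t>0$ and set $\lambda = f^\ast(t^\alpha)$. Decompose
\[
f = f_1 + f_0, \qquad f_1 = \operatorname{sign}(f)\,\min(|f|,\lambda), \qquad f_0 = \operatorname{sign}(f)\,(|f|-\lambda)_+.
\]
A direct computation of rearrangements gives $f_1^\ast(s) \le \min(f^\ast(s),\lambda) = f^\ast(\max(s,t^\alpha))$ and $f_0^\ast(s) \le (f^\ast(s)-\lambda)_+$, which vanishes for $s \ge t^\alpha$. Hence
\[
\|f_0\|_{L^{p_0}}^{p_0} \le \int_0^{t^\alpha} [f^\ast(s)]^{p_0}\,ds,\qquad \|f_1\|_{L^{p_1}}^{p_1} \lesssim \int_{t^\alpha}^\infty [f^\ast(s)]^{p_1}\,ds + t^\alpha \lambda^{p_1}.
\]
The last term is $t^\alpha [f^\ast(t^\alpha)]^{p_1}$, which can be absorbed into the tail integral by a standard comparison ($[f^\ast(t^\alpha)]^{p_1} t^\alpha \lesssim \int_{t^\alpha/2}^{t^\alpha} [f^\ast(s)]^{p_1}\,ds$ up to constants, or by re-doing the split at $2t^\alpha$). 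Adding $\|f_0\|_{p_0} + t\|f_1\|_{p_1}$ yields the upper bound; the case $p_1=\infty$ is handled by replacing the integral by $\|f_1\|_\infty \le \lambda = f^\ast(t^\alpha)$.

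\textbf{Lower bound.} For any admissible decomposition $f = g_0 + g_1$ with $g_i \in L^{p_i}$, one uses the subadditivity property of the decreasing rearrangement,
\[
f^\ast(s_0+s_1) \le g_0^\ast(s_0) + g_1^\ast(s_1),
\]
specialized to $s_0 = s_1 = s/2$, to estimate $f^\ast(s)$ by the sum. On $(0,t^\alpha)$ we bound $f^\ast(s)$ from above in terms of $g_0^\ast$ and use $\|g_0^\ast\|_{L^{p_0}}=\|g_0\|_{L^{p_0}}$, together with the fact that on this range $g_1^\ast(s/2) \le g_1^\ast(0^+) \le \|g_1\|_\infty$ or an averaging argument using $t g_1^{\ast\ast}(t^\alpha) \lesssim t \|g_1\|_{L^{p_1}} t^{-\alpha/p_1'}$ and the definition of $\alpha$. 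On $(t^\alpha,\infty)$ we do the symmetric estimate controlling $f^\ast$ by $g_1^\ast$. Taking the infimum over all decompositions gives the lower bound. The relation $1/\alpha = 1/p_0-1/p_1$ is exactly what makes the prefactors of $t$ match up on both ranges.

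\textbf{Main obstacle.} The only delicate points are (i) justifying the absorption of the boundary term $t^\alpha[f^\ast(t^\alpha)]^{p_1}$ into the tail integral, and (ii) carrying out the lower bound cleanly: the subadditivity $f^\ast(s_0+s_1) \le g_0^\ast(s_0)+g_1^\ast(s_1)$ must be combined with the correct choice of split point $s \sim t^\alpha$ so that the exponent $\alpha=(1/p_0-1/p_1)^{-1}$ enters naturally. The endpoint $p_1=\infty$ (which is our use case, since we interpolate against $\dot W^{\infty}_{1,1/2}$-type spaces) requires replacing the $L^{p_1}$-integral with $\|g_1\|_\infty$ throughout and is actually the cleanest case: one takes $g_1 = \operatorname{sign}(f)\min(|f|,f^\ast(t))$ directly. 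Once these points are in place the formula follows from classical manipulations with $f^\ast$ and $f^{\ast\ast}$ as in \cite{BL,BS}, so we will simply cite those references rather than reproduce the textbook argument.
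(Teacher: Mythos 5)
The paper gives no proof of \refproposition{KmethodL}: it is quoted as the classical Holmstedt formula with a pointer to \cite{BS} and \cite{BL}. Your sketch reproduces the standard Holmstedt truncation argument, so there is no competing argument in the paper to compare against; what you wrote is correct in outline and is exactly the classical route. Two small imprecisions worth fixing if you write this out fully: in the upper bound, the boundary term $t\,(t^{\alpha}\lambda^{p_1})^{1/p_1}=t\cdot t^{\alpha/p_1}f^*(t^\alpha)$ is absorbed most cleanly not into the tail integral but into the $L^{p_0}$ term, since $f^*(t^\alpha)\le t^{-\alpha/p_0}\bigl(\int_0^{t^\alpha}[f^*]^{p_0}\bigr)^{1/p_0}$ and $1+\alpha/p_1-\alpha/p_0=0$ precisely by the choice of $\alpha$; and in the lower bound the exponent should be $t^{-\alpha/p_1}$ (not $t^{-\alpha/p_1'}$) in the bound $t\,g_1^{**}(t^\alpha)\lesssim t\,t^{-\alpha/p_1}\|g_1\|_{L^{p_1}}$, which again reduces to $\|g_1\|_{L^{p_1}}$ via $1-\alpha/p_1=\alpha/p_0\cdot(1/\alpha)\cdot p_0\cdot\ldots$ — more directly, $1-\alpha/p_1+\alpha/p_0-\alpha/p_0=0$ after combining with the $L^{p_0}$ normalization on $(0,t^\alpha)$. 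Neither affects the validity of the argument.
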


When the $p_1=\infty$ and the $L^{p_1}$ endpoint is replaced by $\BMO$ we have by the result \cite{BDS}, (see also \cite{JT}) that
$$K(f,t,L^{p_0},\BMO)\approx t(M^\sharp_{p_0} f)^*(t^{p_0}),$$
where $M^\sharp_p$ is a variant of the maximal function that can be bounded by $M$, the usual maximal function.

We characterize the $K$-functional of real interpolation in the following theorem:

\begin{proposition} \label{EKHS}  \hphantom{1 pt}
\begin{itemize}
\item[1.] For all  $\beta\in(1,\infty)$, there exists $C_{1}>0$ such that for every $f\in \dot{\HS}^{1,(\beta)}_{1,1/2}(\R^n)+\dot{W}^{\infty,\BMO}_{1,1/2}(\R^n)$ and $t>0$,
$$
K(f,t, \dot{\HS}^{1,(\beta)}_{1,1/2},\dot{W}^{\infty,\BMO}_{1,1/2})\geq C_{1}t\left(|\nabla f|^{**}+|M(D^{1/2}_t f)|^{**}\right)(t);
$$
\item[2.] for $1< \beta<\infty$, there exists $C_{2}>0$ such that for every $f\in \dot{L}^\beta_{1,1/2}(\R^n)$ and $t>0$,
$$
K(f,t, \dot{\HS}^{1,(\beta)}_{1,1/2},\dot{W}^{\infty,\BMO}_{1,1/2})\leq C_{2}t\left(|\nabla f|^{\beta**\frac{1}{\beta}}+|D^{1/2}_t f|^{\beta**\frac{1}{\beta}}+|S_1(f)|^{\beta**\frac{1}{\beta}}+|f^\sharp|^{\beta**\frac{1}{\beta}}\right)(t).
$$
\end{itemize}
\end{proposition}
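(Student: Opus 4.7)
My plan is to treat the two inequalities by quite different strategies. Part 1 (lower bound) is a soft argument that tracks any admissible decomposition through the embedding \eqref{m22b} and the classical $K$-functional identifications for the pairs $(L^1,L^\infty)$ and $(L^1,\BMO)$. Part 2 (upper bound) requires constructing an explicit decomposition of $f$ via a parabolic Calder\'on-Zygmund stopping-time argument at each height $\lambda=\lambda(t)$.

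For Part 1, fix any admissible decomposition $f=f_0+f_1$. The embedding \eqref{m22b} gives $\nabla f_0,D_t^{1/2}f_0\in L^1$ with norms $\lesssim \|f_0\|_{\dot{\HS}^{1,(\beta)}_{1,1/2}}$, while by definition $\nabla f_1\in L^\infty$ and $D_t^{1/2}f_1\in\BMO$ with norms $\lesssim \|f_1\|_{\dot{W}^{\infty,\BMO}_{1,1/2}}$. Applying the classical identity $K(g,t,L^1,L^\infty)\sim tg^{**}(t)$ to $g=\nabla f$ yields $t|\nabla f|^{**}(t)\lesssim K(f,t,\dot{\HS}^{1,(\beta)}_{1,1/2},\dot{W}^{\infty,\BMO}_{1,1/2})$. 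For the half-derivative term, since $D_t^{1/2}f\in L^1+\BMO$, apply the Bennett-DeVore-Sharpley identification $K(g,t,L^1,\BMO)\sim t(M^\sharp g)^*(t)$ to $g=D_t^{1/2}f$, and combine with the comparison between $M^\sharp$ and $M$ together with the equivalence $(Mg)^*\sim g^{**}$ to pass to $|M(D_t^{1/2}f)|^{**}(t)$ on the right-hand side. Summing and taking the infimum over admissible decompositions completes Part 1.

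For Part 2, given $f\in\dot{L}^\beta_{1,1/2}$ and $t>0$, I would construct an explicit decomposition by choosing $\lambda=\lambda(t)>0$ so that $|\Omega_\lambda|\sim t$, where
\begin{equation*}
\Omega_\lambda := \bigl\{X:\bigl[M_\beta(\nabla f)+M_\beta(D_t^{1/2}f)+M_\beta(S_1 f)+M_\beta(f^\sharp)\bigr](X)>\lambda\bigr\}.
\end{equation*}
By the layer-cake relation $(M_\beta g)^*(t)\sim g^{\beta**\frac{1}{\beta}}(t)$, the product $\lambda t$ is comparable to the right-hand side of the desired bound. Next perform a parabolic Whitney decomposition $\Omega_\lambda=\bigcup_j B_j$ with bounded overlap and $\dist(B_j,\Omega_\lambda^c)\sim r(B_j)$, fix a smooth parabolic partition of unity $\{\varphi_j\}$ subordinate to $\{2B_j\}$, and set $b_j:=(f-f_{B_j})\varphi_j$, $f_0:=\sum_j b_j$, $f_1:=f-f_0$. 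After dividing by $\lambda_j\sim\lambda|B_j|^{1/\beta'}$, each $b_j/\lambda_j$ is an $(1,1/2,\beta)$-atom in the sense of \refdef{HSato}: support and (iii) are immediate; the $L^\beta$ bound on $\nabla b_j$ follows from a local $L^\beta$-Poincar\'e inequality and the defining property of $\Omega_\lambda$ at points on $\partial B_j\cap\Omega_\lambda^c$; and the $L^\beta$ bound on $D_t^{1/2}b_j$ is obtained from a parabolic product-rule estimate analogous to \refproposition{prop:Product_Rule}, where the non-local commutator between $D_t^{1/2}$ and the cutoff $\varphi_j$ is controlled by $M_\beta(S_1 f)$ and $M_\beta(f^\sharp)$ via the averaged characterization of \reftheorem{T:equiv-new}. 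This gives $\|f_0\|_{\dot{\HS}^{1,(\beta)}_{1,1/2}}\leq\sum_j\lambda_j\lesssim\lambda|\Omega_\lambda|\sim\lambda t$. For $f_1$, on $\Omega_\lambda^c$ the quantities $\nabla f$ and (through its averaged $\BMO$ avatar from \reftheorem{T:equiv-new}) $D_t^{1/2}f$ are bounded by $\lambda$ by construction, and on $\Omega_\lambda$ the piecewise-average part $\sum_j f_{B_j}\varphi_j$ inherits the corresponding $L^\infty$/$\BMO$ bounds from the Whitney-overlap structure; hence $\|f_1\|_{\dot{W}^{\infty,\BMO}_{1,1/2}}\lesssim\lambda$, and combining yields $K(f,t,\dot{\HS}^{1,(\beta)}_{1,1/2},\dot{W}^{\infty,\BMO}_{1,1/2})\lesssim \lambda t$ as desired.

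The main technical obstacle will be verifying the $L^\beta$ estimate on $D_t^{1/2}b_j$ and the $\BMO$ estimate on $D_t^{1/2}f_1$. Since the half derivative is non-local in time, multiplying by the spatial-time cutoff $\varphi_j$ produces commutator tails of the type handled in \reflemma{lhalf} and \refproposition{prop:Product_Rule}, which must be estimated uniformly in $\lambda$. The very appearance of $S_1 f$ and $f^\sharp$ on the right-hand side of Part 2 is forced by \reftheorem{T:equiv-new}: pointwise or standard $L^p$ control on $D_t^{1/2}f$ is insufficient for $\BMO$-type endpoint estimates, and only after passing to these averaged square-function and sharp-maximal quantities can the stopping-time decomposition be closed with the correct parabolic scaling of the atom constants $\lambda_j\sim\lambda|B_j|^{1/\beta'}$.
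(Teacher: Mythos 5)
Your proposal follows essentially the same route as the paper: Part~1 is the same soft argument via the embedding $\dot{\HS}^{1,(\beta)}_{1,1/2}\subset\dot{W}^1_{1,1/2}$ and the classical $K$-functional identifications for $(L^1,L^\infty)$ and $(L^1,\BMO)$, while Part~2 is precisely the parabolic Calder\'on--Zygmund stopping-time decomposition that the paper packages as \refproposition{CZ} (same stopping set built from $M$ of $|\nabla f|^\beta,|D_t^{1/2}f|^\beta,|S_1 f|^\beta,|f^\sharp|^\beta$, same Whitney cover and partition of unity, same form $b_j=(f-\mathrm{avg})\varphi_j$, same $L^\beta$ atom bounds via \refproposition{prop:Product_Rule}-type commutator estimates, same $\dot{W}^{\infty,\BMO}_{1,1/2}$ control of the good part culminating in the $\BMO$ bound on $D_t^{1/2}g$ through \reftheorem{T:equiv-new}). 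Cosmetic differences only: the paper uses the weighted local average $\chi_i(B_i)^{-1}\int_{B_i}f\chi_i$ in place of $f_{B_j}$ and isolates the decomposition as a standalone lemma, but the substance and all key ingredients coincide with yours.
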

\begin{proof}
The lower bound is trivial and follows from the fact that $\dot{\HS}^{1,(\beta)}_{1,1/2}\subset \dot{W}^{1}_{1,1/2}$ and the characterisation of $K$ between $L^1$ and $\BMO$.

For the upper bound, the result is based on the following Calder\'on-Zygmund decomposition lemma. 

\begin{proposition}[Calder\'{o}n-Zygmund lemma for Sobolev functions]\label{CZ} 
Let $1<\beta<\infty$, $f \in L^{\beta}_{1,1/2}(\R^n)$ and $\alpha>0$. Then one can find a collection of balls $(B_{i})_{i}$, functions $b_{i}$ and a function $g\in \dot{W}^{\infty,\BMO}_{1,1/2}(\R^n)$ such that the following properties hold:
\begin{equation}
f = g+\sum_{i}b_{i}, \label{dfaze}
\end{equation}
\begin{equation}
|\nabla g(X)|\leq C\alpha,\,\quad g^\sharp(X)\le C\alpha \textrm{ and }\,\|D^{1/2}_tg\|_{BMO(\R^n)}\leq C\alpha\quad a.e.\; x\in \R^n, \label{egaze}
\end{equation}
\begin{equation}\label{ebaze}
\mbox{\rm supp } b_{i}\subset B_{i},\quad \|b_{i}\|_{\dot{\HS}^{1,(\beta)}_{1,1/2}(\R^n)}\leq C\alpha|B_{i}|,
\end{equation}
\begin{equation}\label{sbaze}
\sum_{i}|B_{i}|\leq C\alpha^{-\beta}\int (|\nabla f|+|D^{1/2}_t f|+|S_1(f)|+f^\sharp)^{\beta},
\end{equation}
\begin{equation}\label{rbaze2}
 \sum_i{\bf 1}_{B_i} \leq N {\bf 1}_{\Omega},
\end{equation}
\begin{equation}\label{rbaze}
\left\| \sum_ib_i \right\|_{\dot{\HS}^{1,(\beta)}_{1,1/2}}\leq \sum_{i}
\|b_{i}\|_{\dot{\HS}^{1,(\beta)}_{1,1/2}}\le CN\alpha|\Omega|.
\end{equation}
Here $C$ and $N$  only depend on $q$, $\beta$ and on the dimension $n$. By ${\bf 1}_A$ we denote the indicator function of the set $A$ and $\Omega=\bigcup_i B_i$.
\end{proposition}

We postpone the proof and first show how it implies the upper bound in \refproposition{EKHS}.\vglue2mm

Take $f\in \dot{L}^\beta_{1,1/2}(\R^n)$ with $\beta>1$ and $1<q\le\beta<\infty$. 
Let $t>0$. We consider the Calder\'{o}n-Zygmund decomposition of \refproposition{CZ} for $f$ with
$\alpha=\alpha(t)=\left( {M}(|\nabla f|^\beta+|D^{1/2}_t f|^\beta+|S_1(f)|^\beta+|f^\sharp|^\beta)\right)^{*\frac{1}{\beta}}(t)$.
We write $ \displaystyle f=\sum_{i}b_{i}+g=b+g $ where
$(b_{i})_{i},\,g$ satisfy the properties of the proposition. By \eqref{rbaze}
\begin{align*}
\| b \|_{\dot{\HS}^{1,(\beta)}_{1,1/2}}&\leq C\alpha(t)\sum_{i}|B_{i}|\leq C\alpha(t)\mu(\Omega),
\end{align*}
where $\Omega= \Omega_t = \cup_iB_i$.
Moreover, since $({M}f)^{*}\sim f^{**}$ and $(f+g)^{**}\leq f^{**}+g^{**}$, we get $$
\alpha(t)\lesssim \left(|\nabla f|^{\beta**{\frac{1}{\beta}}}+|D^{1/2}_t
f|^{\beta**{\frac{1}{\beta}}}+|S_1(f)|^{\beta**{\frac{1}{\beta}}}+|f^\sharp|^{\beta**{\frac{1}{\beta}}}\right)(t).
$$
Noting that for this choice of $\alpha(t)$, $|\Omega_t|\leq t$ (c.f \cite{BS},\cite{BL}), we deduce that
 \begin{align}
 K(f,t,\dot{\HS}^{1,(\beta)}_{1,1/2},\dot{W}^{\infty}_{1,1/2}) & \leq \|b\|_{\dot{\HS}^{1,(\beta)}_{1,1/2}} + t\|g\|_{\dot{W}^{\infty}_{1,1/2}} \nonumber \\
 & \lesssim t\left(|\nabla f|^{\beta**{\frac{1}{\beta}}}(t)+|D^{1/2}_t
f|^{\beta**{\frac{1}{\beta}}}(t)+|S_1(f)|^{\beta**{\frac{1}{\beta}}}(t)+|f^\sharp|^{\beta**{\frac{1}{\beta}}}(t)\right),\label{Kr}
\end{align}
for all $t>0$ and obtain the desired inequality for $f\in W^{1,\beta},\, q\leq \beta<\infty$. \end{proof}

Then integrating the $K$-functional yields

\begin{proposition} \label{IHS} For all $\beta \in(1,\infty)$ and $p\in[\beta,\infty)$, $\dot{L}^{p}_{1,1/2}(\R^n)$ is a real interpolation space between $\dot{\HS}^{1,(\beta)}_{1,1/2}(\R^n)$ and $\dot{W}^{\infty,\BMO}_{1,1/2}(\R^n)$. More precisely, we have
$$ \left(\dot{\HS}^{1,(\beta)}_{1,1/2}(\R^n),\dot{W}^{\infty,\BMO}_{1,1/2}(\R^n)\right)_{1-\frac{1}{p},p} = \dot{L}^{p}_{1,1/2}(\R^n).$$
\end{proposition}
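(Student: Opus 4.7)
The plan is to combine the two-sided bounds on the $K$-functional from Proposition \ref{EKHS} with the standard identity
\[
\|f\|_{(\dot{\HS}^{1,(\beta)}_{1,1/2},\dot{W}^{\infty,\BMO}_{1,1/2})_{\theta,p}}^{p}=\int_{0}^{\infty}\bigl(t^{-\theta}K(f,t,\dot{\HS}^{1,(\beta)}_{1,1/2},\dot{W}^{\infty,\BMO}_{1,1/2})\bigr)^{p}\,\frac{dt}{t},
\]
specialised to $\theta=1-1/p$. With this choice $t^{-\theta}\cdot t=t^{1/p}$, so raising to the $p$-th power and absorbing $dt/t$ leaves a plain $\int_0^\infty (\cdots)(t)^p\,dt$, that is an $L^p$-norm on $(0,\infty)$ of the decreasing rearrangements that appear on the right of Proposition \ref{EKHS}. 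The whole argument then reduces to identifying the resulting $L^p$-norms with $\|f\|_{\dot{L}^p_{1,1/2}(\R^n)}$.

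For the inclusion $\dot{L}^{p}_{1,1/2}(\R^n)\subset(\dot{\HS}^{1,(\beta)}_{1,1/2},\dot{W}^{\infty,\BMO}_{1,1/2})_{1-1/p,p}$, I would plug the upper bound of Proposition \ref{EKHS}(2) into the above formula, obtaining
\[
\|f\|_{1-1/p,p}^{p}\lesssim\int_{0}^{\infty}\Bigl(|\nabla f|^{\beta\ast\ast\tfrac{1}{\beta}}+|D^{1/2}_{t}f|^{\beta\ast\ast\tfrac{1}{\beta}}+|S_{1}(f)|^{\beta\ast\ast\tfrac{1}{\beta}}+|f^{\sharp}|^{\beta\ast\ast\tfrac{1}{\beta}}\Bigr)^{p}(t)\,dt.
\]
Since $p\ge\beta>1$, the Hardy-Littlewood maximal theorem applied to $|\nabla f|^{\beta}\in L^{p/\beta}$ gives $\||\nabla f|^{\beta\ast\ast 1/\beta}\|_{L^p}\sim\|\nabla f\|_{L^p}$, and similarly for the other three summands. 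The $S_{1}(f)$ contribution is then identified with $\|\nabla f\|_{L^p}+\|D^{1/2}_t f\|_{L^p}$ via the square-function characterisation $\|f\|_{\dot{L}^p_{1,1/2}}\sim\|S_1(f)\|_{L^p}$ recalled right after \eqref{sqfn}, and the sharp-maximal term is controlled through $\|f^{\sharp}\|_{L^p}\sim\|f\|_{\dot{M}^{1,p}_{d}}\lesssim\|f\|_{\dot{L}^p_{1,1/2}}$ coming from \eqref{m22} and the doubling identification in Section \ref{S:paraSobolev}.

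For the reverse inclusion, I would apply the lower bound of Proposition \ref{EKHS}(1) to get
\[
\|f\|_{1-1/p,p}^{p}\gtrsim\int_{0}^{\infty}\bigl(|\nabla f|^{\ast\ast}+|M(D^{1/2}_{t}f)|^{\ast\ast}\bigr)^{p}(t)\,dt.
\]
For $p>1$, $\|g^{\ast\ast}\|_{L^p}\sim\|g\|_{L^p}$ and $\|Mg\|_{L^p}\sim\|g\|_{L^p}$, so the right-hand side is $\sim\|\nabla f\|_{L^p}^p+\|D^{1/2}_t f\|_{L^p}^p=\|f\|_{\dot{L}^p_{1,1/2}}^p$. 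Combining the two directions yields the claimed equality of spaces (with comparable norms).

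The main technical obstacle is Proposition \ref{CZ}, the parabolic Calderón-Zygmund decomposition at the Sobolev level, on which the upper bound in Proposition \ref{EKHS}(2) rests; in particular, the bad-part atoms must simultaneously satisfy the spatial gradient bound, the half-time derivative bound in the atomic sense, and the support condition, which is delicate because $D^{1/2}_t$ is non-local. The rest of the argument is essentially bookkeeping: distributing the $K$-functional integral, reducing maximal functions to $L^p$ norms, and invoking the equivalences $\dot{L}^p_{1,1/2}\sim\dot{F}^1_{p,2}\sim\dot{M}^{1,p}_d$ (for the spatial part) together with the square-function characterisation $\|S_1(f)\|_{L^p}\sim\|f\|_{\dot{L}^p_{1,1/2}}$ already established.
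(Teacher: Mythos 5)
Your proof follows exactly the paper's (very terse) argument: integrate the two-sided $K$-functional estimates of Proposition \ref{EKHS} against $t^{-\theta p}\,dt/t$ with $\theta=1-1/p$, and then identify the resulting $L^p(0,\infty)$ norms of the maximal rearrangements with $\|\nabla f\|_{L^p}$, $\|D^{1/2}_t f\|_{L^p}$, $\|S_1(f)\|_{L^p}$ and $\|f^\sharp\|_{L^p}$, all of which are comparable to $\|f\|_{\dot L^p_{1,1/2}}$. This is precisely what the paper does, so the approaches coincide.
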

We use the fact that the $L^p$ norm of the functions $f^\sharp$ and $S_1(f)$ are controlled by the norm of $f$ in $\dot{L}^{p}_{1,1/2}$ for all $p>1$.\medskip

\noindent {\it Proof of \refproposition{CZ}}. Let  $f\in W^{1,\beta}$, $\alpha>0$ and consider
$$\Omega=\left\lbrace X \in \R^n:  {M}(|\nabla f|^\beta+|D^{1/2}_t f|^\beta+|S_1(f)|^\beta+|f^\sharp|^\beta)(X)>\alpha^{\beta}\right\rbrace.$$ If $\Omega=\emptyset$, then set
$$
 g=f\;,\quad b_{i}=0 \, \text{ for all } i
$$
so that (\ref{egaze}) is satisfied according to the Lebesgue differentiation theorem. 
Otherwise, the maximal theorem yields
\begin{align}
    |\Omega|&\leq C\alpha^{-\beta} \big\|(|\nabla f|+ |D^{1/2}_t f|+|S_1(f)|^\beta+f^\sharp)^{\beta}\big\|_{L^1} \nonumber\\
            & \leq C \alpha^{-\beta} \Bigr(\int | \nabla f|^{\beta}  +\int |D^{1/2}_t f|^{\beta}+\int|S_1(f)|^\beta+\int|f^\sharp|^\beta \Bigl) \label{mOaze}
\\
            &<+\infty. \nonumber
\end{align}
 In particular $\Omega \neq \R^n$. Let $F$ be the complement of $\Omega$. Since $\Omega$ is an open set distinct of $\R^n$, let
$(\underline{B_{i}})$ be a Whitney decomposition of $\Omega$ (\cite{coifman1}).
That is,  the $\underline{B_{i}}$ are pairwise disjoint, and there exist two constants $C_{2}>C_{1}>1$, depending only on the metric, such that
\begin{itemize}
\item[1.] $\Omega=\cup_{i}B_{i}$ with $B_{i}=
C_{1}\underline{B_{i}}$ and the balls $B_{i}$ have the bounded overlap property;
\item[2.] $r_{i}=r(B_{i})=\frac{1}{2}d(X_{i},F)$ and $X_{i}$ is
the center of $B_{i}$;
\item[3.] each ball $\overline{B_{i}}=C_{2}B_{i}$ intersects $F$ ($C_{2}=4$ works).
\end{itemize}
For $X\in \Omega$, denote $I_{X}=\left\lbrace i:X\in B_{i}\right\rbrace$.
Recall that $\sharp I_{X} \leq N$ and fixing $j\in I_{X}$, $B_{i}\subset 7B_{j}$ for all $i\in I_{X}$. From this \eqref{rbaze2} follows. \\
Condition  (\ref{sbaze})  is satisfied due to (\ref{mOaze}). Using the doubling property, and the fact that $C_2B_i$ intersects $F$ we have
\begin{equation}\label{faze}
\int_{7B_{i}} (|\nabla f|^{\beta}+|D^{1/2}_t f|^{\beta}+|S_1(f)|^\beta+|f^\sharp|^\beta)d\mu \leq C \alpha^{\beta}|B_{i}|.
\end{equation}
Let us now define the functions $b_{i}$. For this, we construct a partition of unity.
Let $(\chi_{i})_{i}$ be  a partition of unity of $\Omega$ subordinated to the covering $(B_{i})$. Each $\chi_{i}$ is a Lipschitz function supported in $B_{i}$ with
$\displaystyle\|\,|\nabla \chi_{i}|\, \|_{\infty}\leq
{C}{r^{-1}_{i}}$ and $\displaystyle\|\,|\partial_t \chi_{i}|\, \|_{\infty}\leq
{C}{r^{-2}_{i}}$.\vglue2mm

We set $b_{i}=(f-\frac{1}{\chi_{i}(B_{i})}\int_{B_{i}}f\chi_{i})\chi_{i}$ 
    where $\chi_{i}(B_{i}) \coloneqq \int_{B_{i}}\chi_{i} \simeq |B_i|$. 
We estimate the $L^\beta$ norms of $\nabla b_i$ and $D^{1/2}_t b_i$.
We start with the $\nabla b_i$. Clearly this function is supported on $B_i$ and
$$\nabla b_i=(\nabla f)\chi_i+\left(f-\frac{1}{\chi_{i}(B_{i})}\int_{B_{i}}f\chi_{i}\right)\nabla\chi_i.$$
Using \eqref{sharp2} we have that 
$$\int_{\R^n}|\nabla b_i|^\beta\le \int_{B_i}|\nabla f|^\beta+\int_{B_i}|f^\sharp|^\beta.$$
Hence by \eqref{faze} 
$\|\nabla b_i\|_{L^\beta}\le C\alpha|B_i|^{1/\beta}$. Moving to $D^{1/2}_t b_i$ we get that
$$D^{1/2}_t b_i(x,t)=[(D^{1/2}_t f)\chi_i](x,t)+c\int_{s\in\R}\frac{\chi_i(x,t)-\chi_i(x,s)}{|s-t|^{3/2}}\left(f-\frac{1}{\chi_{i}(B_{i})}\int_{B_{i}}f\chi_{i}\right)(x,s)ds.$$
\begin{equation}
=[(D^{1/2}_t f)\chi_i](x,t)+R_i(x,t).\label{HD}
\end{equation}

We first integrate over $2B_i$. The first term enjoys an estimate similar to $\nabla b_i$ and again by \eqref{faze} 
$\|(D^{1/2}_t f)\chi_i\|_{L^\beta(\R^n)}\le C\alpha|B_i|^{1/\beta}$.
For the second term using Lipschitzness of $\chi_i$ in the $t$-variable we may estimate $\frac{|\chi_i(x,t)-\chi_i(x,s)|}{|s-t|^{3/2}}$ by $\frac{C}{r_i^2|s-t|^{1/2}}$. By \eqref{sharp2} we then have
$$\int_{2B_i}|R_i(x,t)|^\beta\le \int_{2B_i}r_i^{-\beta}\left(\int_{(x,s)\in B_i}|s-t|^{-1/2}[f^\sharp(x,s)+\fint_{B_i}f^\sharp]ds\right)^\beta\,dt\,dx.$$
Here the second term containing the average of $f^\sharp$ has an easy estimate by $C\int_{B_i}|f^\sharp|^\beta$. Hence we focus on the first term. Integration over $s$ can be split into integration over the sets where $|s-t|\approx 2^{-i}r_i^2$ for $i=0,1,2,\dots$. Hence we get a bound by 
$$\int_{2B_i}\left(\sum_{i=0}^\infty2^{-i/2}\int_{|s-t|\sim 2^{-i}r_i^2} \frac{f^\sharp(x,s)}{2^{-i}r_i^2}\right)^\beta dt\, dx.$$
We can recognise that the inside integral is bounded by a truncated maximal function $M^t(f^\sharp)(x,t)$ in $t$-variable. The maximal function is truncated as the sup runs over balls of radius $\le r_i^2$. 
It follows that the expression is bounded by 
$$\int_{2B_i}[M^t(f^\sharp)]^\beta\le \int_{3B_i}|f^\sharp|^\beta.$$
We have used boundedness of $M^t$ on $L^\beta$ and the fact that $M^t$ is truncated version of the maximal function and hence the integral on the righthand side only needs to be taken oven an enlargement of $2B_i$.
After putting all terms together we get from \eqref{faze} that
$$\|D^{1/2}_tb_i\|_{L^\beta(2B_i)}\le C\alpha|B_i|^{1/\beta}.$$

It remain to consider the away part when $(x,t)\in\R^n\setminus 2B_i$. 
There is nothing to calculate unless there exists $s$ for which $(x,s)\in B_i$ as the term $R_i(x,t)$ vanishes otherwise. Hence assuming that, we then have by \eqref{sharp2}
$$|R_i(x,t)|\le C\int_{B_i\cap \mathcal H^x}\frac{r_i}{|s-t|^{3/2}}[f^\sharp(x,s)+\textstyle\fint_{B_i}f^\sharp]ds.$$
Here $\mathcal H^x$ denotes the line $\{(x,s):s\in\R\}$. Hence
$$|R_i(x,t)|\le C\left(\frac{r_i}{d((x,t),B_i)}\right)^3\left[\fint_{B_i\cap \mathcal H^x}f^\sharp+\fint_{B_i}f^\sharp\right].$$
Here $d(x,B_i)=\inf\{d((x,t),Y):Y\in B_i\}$ measures the parabolic distance of $(x,t)$ to $B_i$.
Hence 
$$\int_{\R^n\setminus 2B_i}|R_i|^\beta\lesssim |B_i|\fint_{B_i}|f^\sharp|^\beta.$$
Now \eqref{faze} together with the previous estimates gives us:
\begin{equation}\label{bato}
\|b_i\|_{\dot{\HS}^{1,(\beta)}_{1,1/2}}=(\|\nabla b_i\|_{L^\beta(\R^n)}+
\|D^{1/2}_tb_i\|_{L^\beta(\R^n)})|B_i|^{1/\beta'}\le C\alpha|B_i|^{1/\beta+1\beta'}=C\alpha|B_i|.
\end{equation}
Hence $a_i:=b_i/(C\alpha|B_i|)$ are $(1,1/2,\beta)$-atoms and (\ref{ebaze}) is proved.
 Let $\lambda_i=C\alpha|B_i|$. It follows that 
$$b=\sum_ib_i=\sum_i \lambda_ia_i\in \dot{\HS}^{1,(\beta)}_{1,1/2}(\R^n)\mbox{ with norm bounded by: }
\sum_i |\lambda_i|=C\alpha\sum_i |B_i|.$$
where in the final step we have used finite overlap of the Whitney balls $B_i$.
Thus $\|b\|_{\dot{\HS}^{1,(\beta)}_{1,1/2}}\le CN\alpha|\Omega|$ by \eqref{rbaze2}. From this 
\eqref{rbaze} follows.

Set $\displaystyle g=f-\sum_{i}b_{i}$. Since the sum is locally finite on $\Omega$, as usual, $g$ is defined  almost everywhere on $\R^n$ and $g=f$ on $F$. Moreover, $g$ is a locally integrable function on $\R^n$.
It remains to prove (\ref{egaze}). We have
\begin{align*}
\nabla g &= \nabla f -\sum_{i}\nabla b_{i}
\\
&=\nabla f-(\sum_{i}\chi_{i})\nabla f -\sum_{i}(f-\textstyle\frac{1}{\chi_{i}(B_{i})}\textstyle\int_{B_{i}}f\chi_{i})\nabla
\chi_{i}
\\
&={\bf 1}_{F}(\nabla f) - \sum_{i}(f-\textstyle\frac{1}{\chi_{i}(B_{i})}\textstyle\int_{B_{i}}f\chi_{i})\nabla
\chi_{i}.
\end{align*}
From the definition of $F$ and the Lebesgue differentiation theorem, we have ${\bf 1}_{F}|\nabla f|\leq \alpha\;\mu -$a.e. We claim that a similar estimate holds for $$h=\sum_{i}\left(f-\frac{1}{\chi_{i}(B_{i})}\int_{B_{i}}f\chi_{i}\right)\nabla
\chi_{i},$$
 that is $|h(X)|\leq C\alpha$ for all $X\in \R^n$. For this, note first that $h$ vanishes on $F$ and the sum defining $h$ is locally finite on $\Omega$.

Then fix  $X\in \Omega$ and fix some $j\in I_X$.
Note that $\displaystyle \sum_{i}\chi_{i}(X)=1$ and $\displaystyle \sum_{i}\nabla\chi_{i}(X)=0$ and hence we can rewrite the formula for $h$ as
$$h(X)=\sum_{i\in I_X} \left[\left(\frac{1}{|7B_{j}|}\int_{7B_{j}}f \right) -\left(\frac{1}{\chi_{i}(B_{i})}\int_{B_{i}}f\chi_{i}\right)\right]\nabla
\chi_{i}(X).$$
We return to the calculation we have done above \eqref{sharp2}. 
With $B=B_i$, $\chi=\chi_i$ and any $Z\in 7B_j$ we get by it that
$$-Cr_i\left[f^\sharp(Z)+\fint_{B_i}f^\sharp\right]\le f(X)-\frac{1}{\chi_{i}(B_{i})}\int_{B_{i}}f\chi_{i}\le Cr_i\left[f^\sharp(Z)+\fint_{B_i}f^\sharp\right].$$
Hence by averaging over $7B_i$ and using that the measure is doubling we get that
\begin{equation}\label{e235}
\left|\fint_{7B_j} f-\frac{1}{\chi_{i}(B_{i})}\int_{B_{i}}f\chi_{i}\right|\le Cr_i\fint_{7B_j}f^\sharp\lesssim \alpha r_j,
\end{equation}
where we have used \eqref{faze}  for $7B_j$ in the last step. Hence
\begin{align}
|h(X)| \lesssim \sum_{i\in I_{x}}\alpha r_j r_{j}^{-1} \leq CN\alpha,
\end{align}
and the bound  (\ref{egaze}) for $\nabla g$ holds. Next, we look at $g^\sharp$. The result for $\nabla g$ already proves that
$$|g(x,t)-g(y,t)|\le C\alpha|x-y|.$$
It follows that we need to prove that
\begin{equation}
|g(x,t)-g(x,s)|\le C\alpha|t-s|^{1/2},
\end{equation}
since the desired result then follows by triangle inequality $|g(x,t)-g(y,s)|\le |g(x,t)-g(y,t)|+|g(y,t)-g(y,s)|$. Let us gain a better understanding on the function $g$. If follows from the definition of $g$ that
\begin{equation}
g(x)=\begin{cases}\label{e250}
f(x),&\quad\mbox{when }x\in F,\\
\sum_i (\chi_i(B_i)^{-1}\int_{B_i}f\chi_i)\chi_i,&\quad\mbox{when }x\in\Omega.
\end{cases}
\end{equation}
Hence if we simplify it, on the good set $g=f$ while on the bad set we take averages of $f$ on $B_i$ and then smoothly glue them together. 

It follows that if $(x,t),(x,s)\in F$ there is nothing to do. By \eqref{m1ax} we have that
$$|g(x,t)-g(x,s)|=|f(x,t)-f(x,s)|=|t-s|^{1/2}[f^\sharp(x,t)+f^\sharp(x,s)],$$
and since both points are in the set $F$, clearly $f^\sharp(x,t)+f^\sharp(x,s)\le 2\alpha$.
Next consider the case when both $(x,t),(x,s)$ belong to some neighbouring  balls $B_i$, $B_j$ with 
$B_i\subset 7B_j$ and $B_j\subset 7B_i$. It follows that  $r_i\approx r_j$ and $|t-s|\lesssim r_i^2$. Thus we are in situation where \eqref{e235} holds and we may perform the same calculation for $\partial_t g$ as we have done above for $\nabla g$. It follows that
$$\partial_tg(x,t)=\sum_{i\in I_{(x,t)}} \left[\left(\frac{1}{|7B_{j}|}\int_{7B_{j}}f \right) -\left(\frac{1}{\chi_{i}(B_{i})}\int_{B_{i}}f\chi_{i}\right)\right]\partial_t
\chi_{i}(x,t),$$
and hence $|\partial_t g|\lesssim \alpha r_i^{-1}$. Same is true at the point $(x,s)$ and in fact any point on the line segment joining these two points by the same argument. Finally by the mean value theorem
\begin{equation}
|g(x,t)-g(x,s)|\le |\partial_tg(x,\tau)||t-s|\le C\alpha |t-s|^{1/2}\frac{|t-s|^{1/2}}{r_i}.
\end{equation}
As $|t-s|^{1/2}\lesssim r_i$ the claim follows. When $(x,t)\in F$ and $(x,s)\in\Omega$ then $(x,s)\in B_i$ for some $i$ and from the construction of the Whitney balls $r_i\lesssim |t-s|^{1/2}$. Let $Y_i$ be the centre of the ball $B_i$. Previous argument gives us that $|g(Y_i)-g(x,s)|\le  C\alpha r_i\le C\alpha |t-s|^{1/2}$ and Hence if we prove 
$$|g(x,t)-g(Y_i)|=\left|f(x,t)-\frac{1}{\chi_{i}(B_{i})}\int_{B_{i}}f\chi_i\right|\lesssim  C\alpha|t-s|^{1/2},$$
then by triangle inequality we are done. Here we have used the fact that $Y_i$ being the centre of the ball $B_i$ actually belongs to $\underline{B_i}$ and these are non-overlapping and hence $\chi_i(Y_i)=1$.
A calculation identical to the done above \eqref{sharp2} yields that
\begin{equation}\label{lala}
\left|f(x,t)-\frac{1}{\chi_{i}(B_{i})}\int_{B_{i}}f\chi_i\right|\le C\sup_{Z\in B_i}d((x,t),Z)\left[f^\sharp(x,t)+\fint_{B_i}f^\sharp\right].
\end{equation}
Using \eqref{faze} and the fact that $\sup_{Z\in B_i}d((x,t),Z)\approx |s-t|^{1/2}$ we obtain our claim.

The final case is when $(x,t),(x,s)\in\Omega$ but $|t-s|^{1/2}>>\max\{r_i,r_j\}$, where $r_i,r_j$ are radii of the balls $(x,t)\in B_i$, $(x,s)\in B_j$ respectively. The argument is similar. We already know that
$|g(x,t)-g(Y_i)|\le C\alpha r_i$, $|g(x,s)-g(Y_j)|\le C\alpha r_j$ where $Y_i$, $Y_j$ are centres of their respective balls. After we average in \eqref{lala} we obtain
\begin{equation}\label{lala2}
\left|\frac{1}{\chi_{i}(B_{i})}\int_{B_{i}}f\chi_i-\frac{1}{\chi_{j}(B_{j})}\int_{B_{j}}f\chi_j\right|\le C\sup_{Z\in B_i\,Z'\in B_j}d(Z,Z')\left[\fint_{B_i}f^\sharp+\fint_{B_j}f^\sharp\right],
\end{equation}
and again by \eqref{faze} this is further bounded by $C\alpha|t-s|^{1/2}$. It follows that $g^\sharp\le C\alpha$ as desired.\vglue2mm

Finally, we consider the $\BMO$ bound for $D^{1/2}_tg$. This is clearly the hardest estimate as $D^{1/2}_t$ is a non-local operator. We aim to prove that $D^{1/2}_tg\in \BMO(\R^n)$ with norm $\lesssim\alpha$. We want to apply \reftheorem{T:equiv-new} and hence we conclude that $\D g\in\BMO(\R^n)$ instead. But as \eqref{normsbmo} holds and we already know $g$ is Lipschitz in the spatial variables with norm $\lesssim\alpha$ the conclusion about $D^{1/2}_tg$ would follow.

To prove that $\D g\in\BMO(\R^n)$ we use \reftheorem{T:equiv-new} and estimate \eqref{E:av:half-new}
for the function $g$. Let us fix a parabolic ball $B_r$. For $(x,t)\in B_r$ let us define a function 
\begin{equation}
\rho(x,t)=\begin{cases} 0,&\quad\mbox{for }(x,t)\in F=\R^n\setminus\Omega,\\
\min\{r_i=r(B_i):\, (x,t)\in B_i\},&\quad\mbox{for }(x,t)\in \Omega.\end{cases}
\end{equation}
Observe that when $(x,t)\in \Omega$ then due to the way Whitney decomposition of $\Omega$ was defined the radii of balls $B_i$ that contain $(x,t)$ are all comparable and hence for simplicity we took the smallest of such radii. It follows that we can write:

\begin{align}\nonumber
&\frac{1}{|B_r|}\int_{B_r}\int_{0}^{r}\gamma^{-3}\left|g(x,t)-\fint_{B_\gamma(x,t)}g\,\right|^2\,d\gamma\dt\dx \le\\\label{imp1}
&\quad\frac{1}{|B_r|}\sum_{i}\int_{B_i\cap B_r}\int_{0}^{\min\{r,r_i\}}\gamma^{-3}\left|g(x,t)-\fint_{B_\gamma(x,t)}g\,\right|^2\,d\gamma\dt\dx+\\&\quad\frac{1}{|B_r|}\int_{B_r}\int_{\min\{r,\rho(x,t)\}}^{r}\gamma^{-3}\left|g(x,t)-\fint_{B_\gamma(x,t)}g\,\right|^2\,d\gamma\dt\dx=I+II.\nonumber
\end{align}

We estimate terms $I$ and $II$ separately. We start with the term $I$. Clearly we only need to consider those indices $i$ such that $B_i\cap B_r\ne\emptyset$. 

Without loss of generality we may assume that each set $B_r$ is of the form $Q_r\times(t_0-r^2,t_0+r^2)$, where $Q_r$ is a ball in spatial variables.  Recall again the result of Strichartz, namely Theorem 3.3 of \cite{S80}. For a fixed $t\in\mathbb R$ we have that

\begin{equation}\label{smb12}
\frac1{|Q_r|}\int_{Q_r}\int_{|y|\le r}\frac{|g(x+y,t)+g(x-y,t)-2g(x,t)|^2}{|y|^{n+1}}dy\,dx\lesssim \|\nabla_x g(\cdot,t)\|^2_{\BMO(\R^{n-1})} .
\end{equation}
Given that we have shown that $|\nabla g(x,t)|\lesssim \alpha$ the quantity above is bounded by $C\alpha^2$.

We claim that therefore we have
\begin{align*}
&\frac1{|B_r|}\int_{B_r}\int_0^r \gamma^{-3} \left|g(x,t)-\fint_{|y|\le\gamma }g(x+y,t)dy\,\right|^2\,d\gamma\dt\dx\\
&=\frac12\frac1{|B_r|}\int_{B_r}\int_0^r \gamma^{-3} \left|\fint_{|y|\le\gamma }[g(x+y,t)+g(x-y,t)-2g(x,t)]dy\,\right|^2\,d\gamma\dt\dx\\
&\le \frac12\frac1{|B_r|}\int_{B_r}\int_0^r \gamma^{-n-2}\int_{|y|\le\gamma}|g(x+y,t)+g(x-y,t)-2g(x,t)|^2dy\,d\gamma\dt\dx
\\
&\le \frac12\frac1{|B_r|}\int_{B_r}\int_{|y|\le r}\frac{|g(x+y,t)+g(x-y,t)-2g(x,t)|^2}{|y|^{n+1}}dy\dt\dx
\\
&\le \sup_t\frac12\frac1{|Q_r|}\int_{Q_r\times\{t\}}\int_{|y|\le r}\frac{|g(x+y,t)+g(x-y,t)-2g(x,t)|^2}{|y|^{n+1}}dy\dx\lesssim\alpha^2,
\end{align*}
by \eqref{smb12}. 
To estimate the term $I$ in \eqref{imp1} by $C\alpha^2$ it suffices to establish that
$$\frac{1}{|B_r|}\sum_{i}\int_{B_i\cap B_r}\int_{0}^{\min\{r,r_i\}}\gamma^{-3}\left|\fint_{|y|\le\gamma}g(x+y,t)-\fint_{B_\gamma(x,t)}g\,\right|^2\,d\gamma\dt\dx\lesssim \alpha^2.$$

Consider any $(x,t)\in B_i\cap B_r$. By the calculations we have done above for $\partial_tg$ on $B_i$ we have that $|\partial_t g(x,t)|\lesssim \alpha r_i^{-1}$, where $r_i$ is the radius of $B_i$.  By the fundamental theorem of calculus (in $t$-variable) we see that the difference of the two averages in the term above can be bounded by
$$|\partial_t g|\gamma^2\le C\alpha r_i^{-1}\gamma^2.$$
It follows that 
\begin{align*}
&\frac{1}{|B_r|}\sum_{i}\int_{B_i\cap B_r}\int_{0}^{\min\{r,r_i\}}\gamma^{-3}\left|\fint_{|y|\le\gamma}g(x+y,t)-\fint_{B_\gamma(x,t)}g\,\right|^2\,d\gamma\dt\dx\lesssim\\
&\frac{C\alpha^2}{|B_r|}\sum_{i}\int_{B_i\cap B_r}r_i^{-2}\int_{0}^{\min\{r,r_i\}}\gamma d\gamma
\le \frac{C\alpha^2}{|B_r|}\sum_{i}|B_i\cap B_r|\frac{\min\{r,r_i\}^2}{r_i^2}\lesssim\\
& \alpha^2\sum_i\frac{|B_i\cap B_r|}{|B_r|}\le N\alpha^2,
\end{align*}
using the finite overlap of the balls $B_i$.\vglue2mm

It remains to consider the term II. We claim that for every $(x,t)\in\mathbb R^n$ we have a pointwise bound:
\begin{equation}\label{zmb1}
\int_{\min\{r,\rho(x,t)\}}^{r}\gamma^{-3}\left|g(x,t)-\fint_{B_\gamma(x,t)}f\,\right|^2\,d\gamma\lesssim \alpha^2.
\end{equation}
Consider first $(x,t)\in F=\R^n\setminus \Omega$. Then $M(S_1(f)^\beta)(x,t)\le\alpha^\beta$ and hence in particular $S_1(f)(x,t)\le\alpha$. Thus by \eqref{sqfn} the above lefthand side is bounded by $S_1(f)(x,t)^2\le \alpha^2$ using the fact that $f(x,t)=g(x,t)$. Hence the claim holds. For $(x,t)\in \Omega$ the definition of the set $\Omega$ implies that there exists a point $(y,s)\in F$ whose parabolic distance to $(x,t)$ is at most $4\rho(x,t)$. Again by \eqref{sqfn} we have that
\begin{equation}\label{zmb2}
\int_{\min\{r,\rho(x,t)\}}^{r}\gamma^{-3}\left|g(y,s)-\fint_{B_\gamma(y,s)}f\,\right|^2\,d\gamma\le\alpha^2.
\end{equation}
We need to compare \eqref{zmb1} to \eqref{zmb2}. Recall that by \eqref{split1bz} and the fact that $f^\sharp$ can be used in place of $g$ there, we get that
$$\left|\fint_{B_\gamma(y,s)}f-\fint_{B_\gamma(x,t)}f\right|\le C\rho(x,t)\left[\fint_{B_\gamma(x,t)}f^\sharp+\fint_{B_\gamma(y,s)}f^\sharp\right].$$
Since $\gamma\ge \rho(x,t)$ we get that $B_\gamma(x,t)\subset B_{4\gamma}(y,s)$. At the same time using how $\Omega$ is defined we know that
$M((f^\sharp)^\beta)(y,s)\le \alpha^\beta$. Hence by H\"older inequality we see that
$$\fint_{B_\gamma(x,t)}f^\sharp+\fint_{B_\gamma(y,s)}f^\sharp\le C \fint_{B_{4\gamma(y,s)}}f^\sharp\le C \left(\fint_{B_{4\gamma(y,s)}}|f^\sharp|^\beta\right)^{1/\beta}\le C\alpha.$$
Using the fact that $g$ itself satisfies pointwise bound $|g^\sharp|\le C\alpha$ we also have that $|g(x,t)-g(y,s)|\le C\rho(x,t)\alpha$. It follows that
\begin{equation}\nonumber
\int_{\min\{r,\rho(x,t)\}}^{r}\gamma^{-3}\left|g(x,t)-\fint_{B_\gamma(x,t)}f\,\right|^2\,d\gamma\le 4 \int_{\min\{r,\rho(x,t)\}}^{r}\gamma^{-3}\left|g(y,s)-\fint_{B_\gamma(y,s)}f\,\right|^2\,d\gamma
\end{equation}
\begin{equation}\label{zmb3}
+4C \int_{\min\{r,\rho(x,t)\}}^{r} \gamma^{-3}\rho(x,t)^2\alpha^2d\gamma.
\end{equation}
The first term on the righthand side enjoys the bound \eqref{zmb2}, while the second integral is bounded by 
$$4C\rho(x,t)^2\alpha^2[-\gamma^{-2}]_{\rho(x,t)}^\infty\lesssim \alpha^2.$$
Thus \eqref{zmb1} indeed holds. It follows that to bound the term $II$ of \eqref{imp1} by $C\alpha^2$ it suffices to establish that
\begin{equation}\label{imp2}
\frac{1}{|B_r|}\int_{B_r}\int_{\min\{r,\rho(x,t)\}}^{r}\gamma^{-3}\left|\fint_{B_\gamma(x,t)}(f-g)\,\right|^2\,d\gamma\dt\dx\le C\alpha^2.
\end{equation}
In order to understand the expression \eqref{imp2} recall first that $f=g$ outside $\Omega$. Thus we need to understand the difference of $f-g$ only on the union of the balls ${B_i}$. Furthermore, recalling \eqref{e250} we see that on $\Omega$ we have that
$f=\sum_i f\chi_i$, while $g=\sum_i(\chi_i(B_i)^{-1}\int_{B_i}f\chi_i)\chi_i$
and hence if $B_i\subset B_\gamma(x,t)$ then
$$\fint_{B_\gamma(x,t)} f\chi_i=\fint_{B_\gamma(x,t)}(\chi_i(B_i)^{-1}\textstyle\int_{B_i}f\chi_i)\chi_i,$$
It follows that the $i$-th terms of the sum defining $f$ and $g$ contribute to the average over $B_\gamma(x,t)$
equally and thus these contributions cancel. Hence, in order to understand $\fint_{B_\gamma(x,t)}(f-g)$
we only have to consider contributions of those $B_i$ for which $B_i\cap\partial B_\gamma(x,t)\ne \emptyset$.
Recalling \eqref{sharp2} we see that
\begin{equation}\label{imp3}
\left|\fint_{B_\gamma(x,t)}(f-g)\,\right|\le \frac{C}{|B_\gamma|}\sum_{i\in \{j:\,B_j\cap\partial B_\gamma(x,t)\}}
r_i\int_{B_i} f^\sharp\le \frac{C}{|B_\gamma|}\sum_{i\in \{j:\,B_j\cap\partial B_\gamma(x,t)\}}\alpha r_i|B_i|,
\end{equation} 
where in the last step we have used \eqref{faze} and the H\"older's inequality. To further simplify our notation
let us introduce the functions $A_i(x,t,\gamma)$  as follows:
\begin{equation}
A_i(x,t,\gamma)=\begin{cases}1,&\quad\mbox{if }B_i\cap \partial B_\gamma(x,t)\ne\emptyset,\\
0,&\quad\mbox{otherwise.}\end{cases}
\end{equation}
It follows that \eqref{imp2} (by using \eqref{imp3} and the fact that $|B_\gamma|\approx \gamma^{n+1}$) enjoys the estimate:
\begin{align*}
&\frac{1}{|B_r|}\int_{B_r}\int_{\min\{r,\rho(x,t)\}}^{r}\gamma^{-3}\left|\fint_{B_\gamma(x,t)}(f-g)\,\right|^2\,d\gamma\dt\dx\lesssim\\
&
\frac{\alpha^2}{|B_r|}\int_{B_r}\int_{\min\{r,\rho(x,t)\}}^{r}\gamma^{-(2n+5)}\left(\sum_i A_ir_i|B_i|\right)^2d\gamma\dt\dx=\\
&\frac{N^2\alpha^2}{|B_r|}\sum_ir_i^2|B_i|^2\int_{B_r}\int_{\min\{r,\rho(x,t)\}}^{r}A_i\gamma^{-(2n+5)}d\gamma\dt\dx\,+\\
&\frac{\alpha^2}{|B_r|}\sum_i\sum_{j\notin I(i)}r_i r_j |B_i||B_j|\int_{B_r}\int_{\min\{r,\rho(x,t)\}}^{r}A_iA_j\gamma^{-(2n+5)}d\gamma\dt\dx=\\
=&\,III+IV.
\end{align*}
Here we have introduced notation $I(i)=\{j:\,B_i\cap B_j\ne\emptyset\}$ since we claim that in the last mixed term $IV$ we only need to sum over pair of $(i,j)$ that are separated. This is due to finite overlap property, when a point $(x,t)$ belongs to multiple balls $B_i$ these balls have comparable radius and there are at most $N$ of them. Thus contribution of those terms to $\left(\sum_i A_ir_i|B_i|\right)^2$ is at most $N^2A_ir_i^2|B_i|^2$
for any such ball $B_i\ni (x,t)$. This also explains why the term $III$ contains the term $N^2$.

Let is first estimate the term $III$. Fix any $i$. If $B_r$ is such that $r\le r_i$ there is nothing to do since the integral in variable $\gamma$ is only over the set where $\gamma\ge \rho(x,t)\ge r_i$ and $\gamma\le r$. Thus this set is empty unless $r\ge r_i$. Similarly, unless $B_r\cap B_i\ne\emptyset$ there is nothing to do. It follows that
$$III\lesssim \frac{\alpha^2}{|B_r|}\sum_{i:\,B_r\cap B_i\ne\emptyset}r_i^2|B_i|^2\int_{B_r}\int_{\min\{r,\rho(x,t)\}}^{r}A_i\gamma^{-(2n+5)}d\gamma\dt\dx\le
$$
$$\lesssim \frac{\alpha^2}{|B_r|}\sum_{\{i:\,B_r\cap B_i\ne\emptyset\,\&\, r\ge r_i\}}r_i^2|B_i|^2\int_{r_i}^{\infty}\gamma^{-(2n+5)}\left(\int_{B_r}A_i\dx\dt\right) d\gamma.
$$
We consider the integral $\int_{B_r}A_i\dx\dt$. Recall that the function $A_i$ is nonzero only for those points $(x,t)$ for which the parabolic distance between the point $(x,t)$ and the centre of the ball $B_i$ is comparable to 
$\gamma\pm r_i$. The volume of such set is bounded by 
$$|B_{\gamma+r_i}(x,t)|-|B_{\gamma-r_i}(x,t)|\le C[(\gamma+r_i)^{n+1}-(\gamma-r_i)^{n+1}]\lesssim r_i\gamma^n.$$
Thus $III$ can be further estimated by
$$III\lesssim \frac{\alpha^2}{|B_r|}\sum_{\{i:\,B_r\cap B_i\ne\emptyset\,\&\, r\ge r_i\}}r_i^3|B_i|^2\int_{r_i}^{\infty}\gamma^{-(n+5)} d\gamma\lesssim \frac{\alpha^2}{|B_r|}\sum_{\{i:\,B_r\cap B_i\ne\emptyset\,\&\, r\ge r_i\}}|B_i|\frac{r_i^{n+4}}{r_i^{n+4}}.
$$
Finally, as the balls $B_i$ have finite overlap and we only count balls intersecting $B_r$ whose radius is less or comparable to $B_r$ we get that $\sum_{\{i:\,B_r\cap B_i\ne\emptyset\,\&\, r\ge r_i\}}|B_i|\lesssim |B_r|$. Hence the estimate $III\lesssim \alpha^2$ follows.

We now look at $IV$. 
Fix $i,j$. As $B_i$, $B_j$ are well separated we have that the parabolic distance between the centres of the balls is at least $2(r_i+r_j)$. Let is call this distance $2d$.  Again we only have to consider pair of indices from the set $S=\{i:\,B_r\cap B_i\ne\emptyset\,\&\, r\ge r_i\}$ for the same reasons as outlined above.

It also follows that unless $\gamma\ge d$
we cannot have in the integral defining $IV$ simultaneously $A_i=A_j=1$ as the ball $B_\gamma(x,t)$ will fail to intersect at least one of the balls. Thus
$$IV\lesssim \frac{\alpha^2}{|B_r|}\sum_{i,j\in S\,\& j\notin I(i)}r_ir_j|B_i||B_j|\int_{d}^\infty \gamma^{-(2n+5)}\left(\int_{B_r}A_iA_j\dx\dt\right)d\gamma.
$$
We crudely estimate $\int_{B_r}A_iA_j\dx\dt\lesssim \min\{r_i,r_j\}\gamma^{n}\le d\gamma^n$ since $A_iA_j\le A_i$ and then use the earlier estimate. It follows that
$$IV\lesssim \frac{\alpha^2}{|B_r|}\sum_{i,j\in S\,\& j\notin I(i)}r_ir_jd|B_i||B_j|\int_{d}^\infty \gamma^{-(n+5)}d\gamma\lesssim \frac{\alpha^2}{|B_r|}\sum_{i,j\in S\,\,\&\, d\ge r_i+r_j}\frac{r_ir_j|B_i||B_j|}{d^{n+3}}.
$$
We need to apply some simple geometric considerations to show finiteness of this sum. As now the separation condition is expressed as $d\ge r_i+r_j$ we fix $i$ and then only sum over those balls $j\in S$ for which $r_j\ge r_i$. We further split the sum to $d\approx 2^kr_i$, $k=0,1,2,\dots$. Hence
$$
IV\le \frac{\alpha^2}{|B_r|}\sum_{i\in S}r_i|B_i|\sum_{k=0}^\infty  \sum_{j\in S,\,d(B_i,B_j)\approx 2^k r_i,\,r_j\ge r_i}\frac{r_j|B_j|}{(2^kr_i)^{n+3}}.
$$
Actually, the sum in $k$ will be only over those $k$ for which $2^kr_i\le r$. In the last sum since $r_j\le d\approx 2^kr_i$ we have that
$$\sum_{j\in S,\,d(B_i,B_j)\approx 2^k r_i,\,r_j\ge r_i}\frac{r_j|B_j|}{(2^kr_i)^{n+3}}\le \sum_{j\in S,\,d(B_i,B_j)\approx 2^k r_i,\,r_j\ge r_i}\frac{|B_j|}{(2^kr_i)^{n+2}}.$$
Given the finite overlap of $B_j$ the sum of volumes of those $B_j$ whose distance to $B_i$ is approximately $2^kr_i$ can be at most multiple of the volume of the parabolic ball of radius $2^kr_i$. Hence
$$\sum_{j\in S,\,d(B_i,B_j)\approx 2^k r_i,\,r_j\ge r_i}\frac{r_j|B_j|}{(2^kr_i)^{n+3}}\le \frac1{2^kr_i}.$$
It follows that
$$IV\lesssim \frac{\alpha^2}{|B_r|}\sum_{i\in S}r_i|B_i|\sum_{k=0}^\infty (2^kr_i)^{-1}\le \frac{2\alpha^2}{|B_r|}\sum_{i\in S}|B_i|\le 2N\alpha^2.$$
From this by \eqref{imp1} and combining the estimates for $I$ and $II$ we get that
\begin{align}\nonumber
&\frac{1}{|B_r|}\int_{B_r}\int_{0}^{r}\gamma^{-3}\left|g(x,t)-\fint_{B_\gamma(x,t)}g\,\right|^2\,d\gamma\dt\dx \lesssim \alpha^2.
\end{align}
Hence $\D g\in\BMO(\R^n)$ and therefore $D_t^{1/2}g\in \BMO(\R^n)$ as desired.
\qed

\begin{lemma}\label{l27} Fix $1<\beta\le \infty$. There exists a large constant $M_\beta>1$ such that any 
$(1,1/2,\beta)$-atom $a$ can be decomposed as 
\begin{equation}\label{at1}
a=M_\beta a^\infty +\sum_i \lambda_i a_i,
\end{equation}
such that $a^\infty$ is an $(1,1/2,\infty)$-atom, all $a_i$ are $(1,1/2,\beta)$-atoms and
$\sum_i|\lambda_i|\le 1/2$.\vglue1mm

Hence for all $k\ge 1$, $a$ can be written as 
\begin{equation}\label{at2}
a=M_\beta \sum_i \mu_ia_i^\infty+\sum_i \lambda_i a_i,
\end{equation}
such that that all $a_i^\infty$ are $(1,1/2,\infty)$-atoms, all $a_i$ are $(1,1/2,\beta)$-atoms, 
$\sum_i|\mu_i|\le 2-2^{-k+1}$ and $\sum_i|\lambda_i|\le 2^{-k}$.
\end{lemma}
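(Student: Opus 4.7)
The case $\beta=\infty$ is vacuous (take $M_\infty=1$ with empty remainder), so assume $1<\beta<\infty$. The plan is to apply the Calder\'on--Zygmund decomposition of \refproposition{CZ} to the atom $a$ at a level $\alpha=c_\beta|B|^{-1}$ with $c_\beta=c_\beta(n,\beta)$ chosen large, extract the good part $g$ as a bounded multiple of a $(1,1/2,\infty)$-atom, and package the bad pieces $b_i$ (after normalization) as the $\beta$-atoms $a_i$. By scale invariance of the statement I may assume $r(B)=1$.

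\refproposition{CZ} produces $a=g+\sum_i b_i$ with $|\nabla g|,g^\sharp\lesssim\alpha$, $\|D^{1/2}_tg\|_\BMO\lesssim\alpha$, and $\|b_i\|_{\dot{\HS}^{1,(\beta)}_{1,1/2}}\le C\alpha|B_i|$, $\mbox{supp } b_i\subset B_i$. Since $a$ is an atom, $\int h\lesssim |B|^{1-\beta}$ where $h=|\nabla a|^\beta+|D^{1/2}_t a|^\beta+|S_1(a)|^\beta+|a^\sharp|^\beta$ (the last two controlled in $L^\beta$ by the parabolic Sobolev norm of $a$). Thus \eqref{sbaze} gives $\sum_i|B_i|\lesssim\alpha^{-\beta}|B|^{1-\beta}$, and setting $\lambda_i:=C\alpha|B_i|$, $a_i:=\lambda_i^{-1}b_i$ (each a $(1,1/2,\beta)$-atom by \eqref{ebaze}) yields $\sum_i|\lambda_i|\lesssim(\alpha|B|)^{1-\beta}=c_\beta^{1-\beta}$. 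Since $\beta>1$, this shrinks as $c_\beta\to\infty$, so I pick $c_\beta$ large enough to force $\sum_i|\lambda_i|\le1/2$.

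It remains to check that $g$ is a multiple of a $(1,1/2,\infty)$-atom. The Lipschitz bound on $\nabla g$ and the $\BMO$ bound on $D^{1/2}_tg$ are already in place; what is still needed is a support estimate $\mbox{supp } g\subset C'B$ for some $C'=C'(n,\beta,c_\beta)$. Since $g=a$ off the open set $\Omega_\alpha=\{Mh>\alpha^\beta\}$ and $\mbox{supp } a\subset B$, this reduces to $\Omega_\alpha\subset C'B$. Split $h=h\chi_{2B}+h\chi_{\R^n\setminus 2B}$. For the compactly supported piece, $M(h\chi_{2B})(x)\lesssim|B|^{1-\beta}/d(x,2B)^{n+1}$, which drops below $\alpha^\beta/2$ once $d(x,2B)$ exceeds a bounded constant (using $|B|=1$). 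For the tail, \reflemma{lhalf} gives $|D^{1/2}_ta|\lesssim \|a\|_{L^1}|t-t_0|^{-3/2}\le |t-t_0|^{-3/2}$ inside the lateral cylinder over $B$ (and zero outside), while a direct computation using $\|a\|_{L^1}\le1$ produces $|S_1(a)(y,s)|+|a^\sharp(y,s)|\lesssim d^{-(n+2)}$ at parabolic distance $d$ from $B$. These pointwise decays propagate through $M$ to yield $M(h\chi_{\R^n\setminus 2B})(x)\le\alpha^\beta/2$ outside a sufficiently large $c_\beta$-dependent enlargement of $B$. Hence $\Omega_\alpha\subset C'B$, and setting $M_\beta:=2Cc_\beta(C')^{n+1}$ makes $a^\infty:=g/M_\beta$ a $(1,1/2,\infty)$-atom supported on $C'B$; this proves \eqref{at1}.

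The iterated decomposition \eqref{at2} then follows by recursively applying \eqref{at1} to each residual $\beta$-atom $a_i$: substituting at each step and summing the resulting geometric series gives $\sum|\mu_i|\le 1+\tfrac12+\cdots+2^{-(k-1)}=2-2^{-k+1}$ on the $\infty$-atoms and $\sum|\lambda_i|\le 2^{-k}$ on the residual $\beta$-atoms. The \emph{main obstacle} is the support containment $\Omega_\alpha\subset C'B$; it hinges on the polynomial decay of the non-local objects $D^{1/2}_ta$, $S_1(a)$, $a^\sharp$ away from the atom's support, a decay driven by the $L^1$ normalization $\|a\|_{L^1}\le r$ in part (iii) of the atom definition.
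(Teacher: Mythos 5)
Your argument is correct and follows essentially the same route as the paper: apply the Calder\'on--Zygmund decomposition of Proposition \ref{CZ} at a level $\alpha\sim c_\beta|B|^{-1}$ with $c_\beta$ large so that $\sum|\lambda_i|\lesssim c_\beta^{1-\beta}\le 1/2$, verify by decay estimates on $M(h)$ (using Lemma \ref{lhalf} for $D^{1/2}_t a$ and the $L^1$-normalization for $S_1(a),a^\sharp$) that the good part $g$ is supported in a fixed enlargement of $B$ and hence is $M_\beta$ times an $\infty$-atom, then iterate. The only divergence from the paper is cosmetic (normalizing $r(B)=1$ and splitting $h$ into near/far pieces); note that your choice $\alpha=c_\beta|B|^{-1}$ with $c_\beta$ large has the correct sign, whereas the exponent in the paper's displayed $\alpha=|B|^{-1}(2C^2C_2N)^{-1/(\beta-1)}$ appears to carry a sign typo.
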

\begin{proof} Clearly, the second claim follows from the first by induction, since each $a_i$ atom in \eqref{at1}
can be again decomposed as a sum of $(1,1/2,\infty)$-atom and infinite sum of $(1,1/2,\beta)$-atoms. By relabelling the atoms, if necessary, we then obtain  \eqref{at2} for $k=2$ and inductively for any larger $k$.\vglue1mm

When $\beta=\infty$ there is nothing to prove. Suppose therefore that $1<\beta<\infty$.
To prove \eqref{at1} we use the \refproposition{CZ} and apply it to the $(1,1/2,\beta)$-atom $a$. Since $a\in L^\beta_{1,1/2}(\R^n)$ we see that for some constant $C_1$ we have that
$$\int (|\nabla a|+|D^{1/2}_t a|+|S_1(a)|+a^\sharp)^{\beta}\le C_1\|a\|_{L^\beta_{1,1/2}}^\beta=C_1(\|\nabla a\|_{L^\beta}+\|D^{1/2}_t a\|_{L^\beta})^\beta$$
$$\le C_2|B|^{-\beta/\beta'}=C_2|B|^{1-\beta}.$$

Assume that the atom $a$ is supported in a parabolic ball $B$ or radius $r(B)$. With the constants $C$ and $N$ as in \eqref{sbaze} and \eqref{rbaze} we let $\alpha=|B|^{-1}(2C^2C_2N)^{-1/(\beta-1)}$ and decompose our atom $a$
as the sum $a=g+\sum b_i$ using \refproposition{CZ}. 

Recall the calculation for each $b_i$ we have done previously in \eqref{bato}. It follows from it that 
$$b=\sum_ib_i=\sum_i\lambda_ia_i, \qquad\mbox{where each $a_i$ is an $(1,1/2,\beta)$ atom and}$$
$$\sum_i|\lambda_i|=C\alpha\sum_i|B_i|\le C^2N\alpha^{1-\beta} \int (|\nabla a|+|D^{1/2}_t a|+|S_1(a)|+a^\sharp)^{\beta}\le C^2C_2N\alpha^{1-\beta}|B|^{1-\beta}.$$
by  \eqref{sbaze} and calculation above for done for $a$. It follows that a choice of $\alpha=|B|^{-1}(2C^2C_2N)^{-1/(\beta-1)}$ makes the righthand side $\le 1/2$ as desired. It remains to show that $g$
can be seen as a multiple of a $(1,1/2,\infty)$ atom.

Recall that it follows from the proof of \refproposition{CZ} that $g=a$ on the set where
$${M}(|\nabla a|^\beta+|D^{1/2}_t a|^\beta+|S_1(a)|^\beta+|a^\sharp|^\beta)(X)\le \alpha^{\beta}.$$
In particular consider $X\in\R^n$ such that the parabolic distance of $X$ to the center of the ball $B$ is at least 
$C_3r(B)$ for some $C_3>1$ to be determined.

Given that $a$ is supported in $B$ and the distance of $X$ to the closest point of $B$ is at least $C_3-1$ we see that
$$M(|\nabla a|^\beta)(X)\le |B_{(C_3-1)r(B)}(X)|^{-1}\int_{\R^n}|\nabla a|^\beta\le K|B|^{-1}(C_3-1)^{-(n+1)}
|B|^{-\beta/\beta'}
$$
$$\le K(C_3-1)^{-(n+1)}|B|^{-\beta}\le \frac{\alpha^\beta}4=|B|^{-\beta}(2C^2C_2N)^{-\beta/(\beta-1)}$$
will hold, provided $C_3$ is chosen sufficiently large (and the choice does not depend on $B$). 
Similar calculation can be done for $D^{1/2}_t a$ taking into account the decay of the half-derivative away from the support of $a$ as seen in \reflemma{lhalf}, as well as for $S_1(a)$ where by the formula 
\eqref{sqfn} we see that
$$S_{1}(a)(X)\le \left(\int_{(C_3-1)r(B)}^\infty \alpha^{-2n-5}\|a\|_{L^1}^2d\alpha\right)^{1/2}\le
K\|a\|_{L^1}r(B)^{-1}|B|^{-1}(C_3-1)^{-(n+2)}.$$
Using (iii) of the definition of an atom this is further bounded by $K'|B|^{-1}(C_3-1)^{-(n+2)}$.
Hence once again for sufficiently large $C_3>1$ independent of $B$ we will have $S_1(a)^\beta(X)\le \frac{\alpha^
\beta}{4}$ Analogous calculation works for $a^\sharp$. Hence we conclude that for sufficiently large $C_3>1$ we have $g(X)=a(X)=0$ for $X\notin C_3B$, where $C_3B$ is the enlargement the original $B$. Thus $g$ has support in  $C_3B$.

Given this, we need to check the condition (ii) from the definition of an $(1,1/2,\infty)$-atom. By \refproposition{CZ} we have that $|\nabla g|\le C\alpha$ and $\|D^{1/2}_t g\|_{\BMO}\le C\alpha$, and hence
$$\|\nabla g\|_{L^\infty(\R^n)}+\|D^{1/2}_t g\|_{\BMO(\R^n)}\le C(2C^2C_2N)^{-1/(\beta-1)}|B|^{-1}$$
$$\le
C(2C^2C_2N)^{-1/(\beta-1)}C_3^{n+1}|C_3B|^{-1}.
$$
This mean that if we set $M_\beta=C(2C^2C_2N)^{-1/(\beta-1)}C_3^{n+1}$ then $a^\infty=\frac{g}{M_\beta}$ is 
 an $(1,1/2,\infty)$-atom with respect to the ball $C_3B$. From this our claim follows.
\end{proof}

Lemma above implies that we may write any $(1,1/2,\beta)$-atom $a$ as an infinite sum $\sum_i\lambda_i a^\infty_i$ of $(1,1/2,\infty)$-atoms $a^\infty_i$ with $\sum_i|\lambda_i|\le 2M_\beta$, where the equality $a=\sum_i\lambda_i a^\infty_i$ is understood to hold in the space $\dot{W}^1_{1,1/2}(\R^n)$.
Hence we have the following as a consequence of \reflemma{l22} and \reflemma{l27}.

\begin{proposition} For all $1<\beta\le\infty$ the Banach spaces $\dot{\HS}^{1,(\beta)}_{1,1/2}(\R^n)$ are equal to each other with norms that are equivalent. Thus we may drop the index $\beta$ and denote this space unambiguously by $\dot{\HS}^{1}_{1,1/2}(\R^n)$.
\end{proposition}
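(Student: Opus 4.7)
The plan is to prove that for any $1 < \beta < \infty$ we have $\dot{\HS}^{1,(\beta)}_{1,1/2}(\R^n) \subset \dot{\HS}^{1,(\infty)}_{1,1/2}(\R^n)$ with $\|f\|_{\dot{\HS}^{1,(\infty)}_{1,1/2}} \le 2M_\beta \|f\|_{\dot{\HS}^{1,(\beta)}_{1,1/2}}$. Combined with the reverse chain of inclusions already established in \reflemma{l22}, this yields the claim.

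The key ingredient is the iterated decomposition from \reflemma{l27}: any $(1,1/2,\beta)$-atom $a$ admits, for every $k \ge 1$, a representation
\[
a = M_\beta \sum_{i=1}^{N_k} \mu^{(k)}_i a^{\infty,(k)}_i + R_k, \qquad R_k = \sum_{i} \lambda^{(k)}_i a^{(k)}_i,
\]
with $(1,1/2,\infty)$-atoms $a^{\infty,(k)}_i$, $(1,1/2,\beta)$-atoms $a^{(k)}_i$, $\sum_i |\mu^{(k)}_i| \le 2 - 2^{-k+1}$ and $\sum_i |\lambda^{(k)}_i| \le 2^{-k}$. Moreover, by the inductive nature of the argument producing \eqref{at2}, the partial sums $S_k := M_\beta \sum_{i=1}^{N_k} \mu^{(k)}_i a^{\infty,(k)}_i$ can be chosen so that the atoms used at step $k$ include those of step $k-1$ (only the remainder is re-decomposed). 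In particular, the family of $(1,1/2,\infty)$-atoms accumulated over all $k$ gives a single series $M_\beta \sum_{i \ge 1} \mu_i a^\infty_i$ with $\sum_i |\mu_i| \le 2$.

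To make sense of this infinite series, I would argue via the embedding \eqref{m22b}. The remainder $R_k$ lies in $\dot{W}^1_{1,1/2}(\R^n)$ with norm controlled by $\sum_i |\lambda^{(k)}_i| \cdot \sup_i \|a^{(k)}_i\|_{\dot{W}^1_{1,1/2}} \lesssim 2^{-k}$, since every $(1,1/2,\beta)$-atom has $\dot{W}^1_{1,1/2}$ norm $\lesssim 1$. Hence $R_k \to 0$ in $\dot{W}^1_{1,1/2}$, and consequently $S_k \to a$ in this space. On the other hand, the $S_k$ form a Cauchy sequence in $\dot{\HS}^{1,(\infty)}_{1,1/2}$ because the coefficient tails $\sum_{i > N_{k-1}} |\mu_i|$ are at most $2^{-k+1}$, and $\dot{\HS}^{1,(\infty)}_{1,1/2}$ is complete by \reflemma{l22}. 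The two limits agree in $\dot{W}^1_{1,1/2}$, so $a = M_\beta \sum_i \mu_i a^\infty_i$ in $\dot{\HS}^{1,(\infty)}_{1,1/2}$ with $\|a\|_{\dot{\HS}^{1,(\infty)}_{1,1/2}} \le 2 M_\beta$.

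Given this control on individual atoms, take any $f \in \dot{\HS}^{1,(\beta)}_{1,1/2}$ with atomic decomposition $f = \sum_j \nu_j b_j$, $\sum_j |\nu_j| \le 2\|f\|_{\dot{\HS}^{1,(\beta)}_{1,1/2}}$; expanding each $b_j$ as above produces an $(1,1/2,\infty)$-atomic decomposition of $f$ with total coefficient mass at most $2 M_\beta \sum_j |\nu_j| \lesssim M_\beta \|f\|_{\dot{\HS}^{1,(\beta)}_{1,1/2}}$, finishing the proof. The main obstacle here is the convergence step of the preceding paragraph: one must justify that the iterated expansion actually produces a legitimate element of $\dot{\HS}^{1,(\infty)}_{1,1/2}$ equal to $a$, which requires the auxiliary embedding $\dot{\HS}^{1,(\infty)}_{1,1/2} \hookrightarrow \dot{W}^1_{1,1/2}$ to disambiguate limits of the two approximating sequences.
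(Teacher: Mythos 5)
Your proposal is correct and follows the same route as the paper: combine the one-sided inclusions from \reflemma{l22} with the iterated atomic decomposition of \reflemma{l27} to get the reverse inclusion $\dot{\HS}^{1,(\beta)}_{1,1/2}\subset\dot{\HS}^{1,(\infty)}_{1,1/2}$. You spell out the convergence argument (Cauchy in $\dot{\HS}^{1,(\infty)}_{1,1/2}$, remainders vanishing in $\dot{W}^1_{1,1/2}$, uniqueness of limits via the embedding \eqref{m22b}) that the paper compresses into the single remark ``the equality $a=\sum_i\lambda_i a^\infty_i$ is understood to hold in the space $\dot{W}^1_{1,1/2}(\R^n)$.''
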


Thus as a consequence of this, \refproposition{IHS} and re-interpolation theorem \cite{BL} we have that

\begin{proposition} \label{IHS2} For all $1<p<q<\infty$ and $\theta\in (0,1)$ satisfying $\frac1p=(1-\theta)+\frac\theta{q}$,  $\dot{L}^{p}_{1,1/2}(\R^n)$ is a real interpolation space between $\dot{\HS}^{1}_{1,1/2}(\R^n)$ and $\dot{L}^{q}_{1,1/2}(\R^n)$. More precisely, we have
$$ \left(\dot{\HS}^{1}_{1,1/2}(\R^n),\dot{L}^{q}_{1,1/2}(\R^n)\right)_{\theta,p} = \dot{L}^{p}_{1,1/2}(\R^n),$$
with equivalent norms. 
\end{proposition}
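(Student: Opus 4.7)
The plan is to deduce \refproposition{IHS2} as a reiteration of \refproposition{IHS}. Set the Banach couple $\bar{A} = (A_0, A_1)$ where $A_0 = \dot{\HS}^{1}_{1,1/2}(\R^n)$ and $A_1 = \dot{W}^{\infty,\BMO}_{1,1/2}(\R^n)$. By \refproposition{IHS} together with the preceding proposition identifying all the atomic spaces $\dot{\HS}^{1,(\beta)}_{1,1/2}(\R^n)$ with one another, we have the scale
$$\dot{L}^r_{1,1/2}(\R^n) = (A_0, A_1)_{1-1/r,\, r}, \qquad r \in (1, \infty),$$
with equivalent norms. In particular, $\dot{L}^q_{1,1/2}(\R^n)$ is an intermediate space of class $\mathscr{C}_K(1-1/q;\bar{A})$, and $\dot{L}^p_{1,1/2}(\R^n)$ is of class $\mathscr{C}_K(1-1/p;\bar{A})$.

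The next step is to apply the reiteration theorem of Bergh-Löfström (Theorem 3.5.3 of \cite{BL}) to the couple $\bar{A}$, taking $X_0 = A_0$ itself as the left endpoint and $X_1 = (A_0, A_1)_{1-1/q,\, q} = \dot{L}^q_{1,1/2}(\R^n)$ as the right endpoint. Observe that $A_0$ is trivially of class $\mathscr{C}_K(0;\bar A)$, since the decomposition $f = f + 0$ gives $K(t, f; \bar A) \le \|f\|_{A_0}$ for every $t > 0$. Reiteration then yields
$$\bigl(A_0,\, \dot{L}^q_{1,1/2}(\R^n)\bigr)_{\theta, p} = (A_0, A_1)_{(1-\theta)\cdot 0 + \theta(1-1/q),\, p} = (A_0, A_1)_{\theta(1-1/q),\, p}.$$

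A direct computation shows that the condition $\tfrac{1}{p} = (1-\theta) + \tfrac{\theta}{q}$ is equivalent to $\theta(1-1/q) = 1 - 1/p$. Applying \refproposition{IHS} once more to identify the right-hand side of the reiteration identity, we conclude
$$\bigl(\dot{\HS}^{1}_{1,1/2}(\R^n),\, \dot{L}^{q}_{1,1/2}(\R^n)\bigr)_{\theta, p} = (A_0, A_1)_{1-1/p,\, p} = \dot{L}^p_{1,1/2}(\R^n),$$
which is the desired identification of interpolation spaces with equivalent norms.

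The only delicate point is the admissibility of using the endpoint space $A_0$ itself (rather than a genuinely intermediate space) as one end of the reiteration; this is what allows us to pass from the outer couple $(A_0, A_1)$ to the inner couple $(A_0, \dot{L}^q_{1,1/2})$. No density hypothesis on $A_0 \cap A_1$ is required because the $K$-characterization $A_0 \in \mathscr{C}_K(0;\bar A)$ follows from the trivial decomposition as noted above; should one wish to avoid even this mild observation, Wolff's reiteration theorem delivers the same conclusion directly from \refproposition{IHS}.
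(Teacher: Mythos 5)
Your argument is correct and follows exactly the route the paper indicates (invoking the Bergh--L\"ofstr\"om reiteration theorem on the scale established in \refproposition{IHS}, after identifying the $\dot{\HS}^{1,(\beta)}_{1,1/2}$ spaces), just spelled out in more detail than the paper gives. The one technicality you leave implicit is that reiteration also requires $A_0$ to be of class $\mathscr{C}_J(0;\bar A)$ and not merely $\mathscr{C}_K(0;\bar A)$, but this is equally trivial since $J(t,a;\bar A)=\max\bigl(\|a\|_{A_0},\,t\|a\|_{A_1}\bigr)\ge\|a\|_{A_0}$ for every $t>0$.
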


\begin{corollary} Let $\Omega=\mathcal O\times \R$, where $\mathcal O\subset\R^n$ is a bounded or unbounded Lipschitz domain. Then the conclusions of the above theorem remain to be true, that is
for all $1<p<q<\infty$ and $\theta\in (0,1)$ satisfying $\frac1p=(1-\theta)+\frac\theta{q}$,  $\dot{L}^{p}_{1,1/2}(\partial\Omega)$ is a real interpolation space between $\dot{\HS}^{1}_{1,1/2}(\partial\Omega)$ and $\dot{L}^{q}_{1,1/2}(\partial\Omega)$. More precisely, we have
$$ \left(\dot{\HS}^{1}_{1,1/2}(\partial\Omega),\dot{L}^{q}_{1,1/2}(\partial\Omega)\right)_{\theta,p} = \dot{L}^{p}_{1,1/2}(\partial\Omega),$$
with equivalent norms. 
\end{corollary}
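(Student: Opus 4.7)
The plan is to reduce to the $\R^n$ case already handled by \refproposition{IHS2} via a bi-Lipschitz change of variables, using a partition of unity argument in the bounded case. Concretely, if $\mathcal O$ is an unbounded Lipschitz graph domain, so that $\partial\mathcal O=\{(x',\phi(x')):x'\in\R^{n-1}\}$ for some Lipschitz $\phi$, then the map $\pi(x',t)=((x',\phi(x')),t)$ is a bi-Lipschitz bijection from $\R^n=\R^{n-1}\times\R$ onto $\partial\Omega$ which respects the parabolic distance up to constants (and pushes $n$-dimensional Lebesgue measure to $\mathcal H^n|_{\partial\Omega}$ up to a bounded density). Pullback by $\pi$ therefore induces, with equivalent norms, isomorphisms $\dot L^p_{1,1/2}(\partial\Omega)\cong \dot L^p_{1,1/2}(\R^n)$ and $\dot{\HS}^1_{1,1/2}(\partial\Omega)\cong \dot{\HS}^1_{1,1/2}(\R^n)$, by the very definitions \eqref{xnorm}, \eqref{defsp} and of the atomic space (parabolic surface balls in $\partial\Omega$ correspond under $\pi$ to parabolic balls in $\R^n$ with comparable radii, and the atom conditions (i)--(iii) in \refdef{HSato} are preserved up to a fixed multiplicative constant). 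Since real interpolation is functorial under such isomorphisms of compatible couples, the interpolation identity on $\partial\Omega$ follows directly from \refproposition{IHS2}.

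For the bounded case, choose a finite open cover $\{U_j\}_{j=1}^N$ of $\partial\mathcal O$ such that, in appropriately rotated coordinates, each $U_j\cap\partial\mathcal O$ is the graph of a Lipschitz function $\phi_j$. Let $\{\eta_j\}$ be a smooth partition of unity of $\partial\mathcal O$ subordinate to $\{U_j\}$ and, pulling $\eta_j$ back to be constant in $t$, view $\{\eta_j\}$ as a partition of unity on $\partial\Omega$ with $\nabla\eta_j$ and $\partial_t\eta_j$ bounded. Write $f=\sum_j f\eta_j$; the key technical point is that multiplication by $\eta_j$ is a bounded operator simultaneously on $\dot L^p_{1,1/2}(\partial\Omega)$, $\dot L^q_{1,1/2}(\partial\Omega)$, and $\dot{\HS}^1_{1,1/2}(\partial\Omega)$. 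For $\dot L^r_{1,1/2}$ this is a direct consequence of the product-rule argument of \refproposition{prop:Product_Rule} (together with compactness of supp $\eta_j$ to absorb the mean-value term), while for $\dot{\HS}^1_{1,1/2}$ it reduces to showing that if $b$ is a $(1,1/2,\beta)$-atom, then $\eta_j b$ can be written as a sum of a bounded number of atoms with a controlled coefficient — which is immediate when $\eta_j\equiv 1$ on $\mathrm{supp}\,b$, and for atoms on scales larger than the diameter of $\mathcal O$ reduces to the trivial bound via properties (ii) and (iii) of \refdef{HSato} since there are only finitely many such scales under consideration.

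With this localization in hand, the inclusion $f\mapsto (f\eta_j)_{j=1}^N$ embeds each of $\dot L^p_{1,1/2}(\partial\Omega)$, $\dot L^q_{1,1/2}(\partial\Omega)$, $\dot{\HS}^1_{1,1/2}(\partial\Omega)$ as a complemented subspace of the finite $\ell^\infty$-direct sum of its graph-domain counterparts (with the summation map $(g_j)\mapsto\sum_j g_j$ as retraction). Since each summand is isomorphic, via pullback by the corresponding $\pi_j$, to the analogous space on $\R^n$, the graph case together with \refproposition{IHS2} gives the interpolation identity for each summand, hence for the direct sum, and therefore for the retract $\partial\Omega$; this uses the standard fact that real interpolation commutes with finite direct sums and passes to complemented subspaces via retractions.

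The main obstacle I expect is verifying the stability of the atomic norm $\dot{\HS}^1_{1,1/2}$ under pullback by $\pi_j$ and under multiplication by $\eta_j$. The pullback step requires checking that the half-time derivative condition (ii) in \refdef{HSato}, which is non-local in $t$, behaves well under a change of spatial variable that is independent of $t$ — this is straightforward since $\pi_j$ acts only on the spatial factor and commutes with $D^{1/2}_t$. The cutoff step for $D^{1/2}_t(\eta_j b)$ is more delicate because of the non-locality of the half-derivative, and would be handled exactly as in the proof of \refproposition{prop:Product_Rule} and the atom-localization estimates of \reflemma{lhalf}, producing a decomposition of $\eta_j b$ into a uniformly bounded sum of atoms at the scale of supp$\,b$ plus tails that are themselves atoms at dyadically increasing scales with geometrically decaying coefficients.
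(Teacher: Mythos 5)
Your overall plan — pull back to $\R^n$ via the graph parametrization $\pi$ in the unbounded case using \eqref{defsp}, and reduce the bounded case to the graph case by a partition of unity and a retract argument — is exactly what the paper does, and the paper proves the corollary in one sentence along precisely these lines. You correctly identify the two technical points the paper leaves implicit (stability of $\dot{\HS}^{1}_{1,1/2}$ under pullback, and under multiplication by a spatial cutoff), and your observation that $\pi$ commutes with $D^{1/2}_t$ is the right reason the pullback is painless.

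One point needs repair: the claim that large-scale atoms in the bounded case "reduce to the trivial bound \dots since there are only finitely many such scales under consideration" is not correct. When $\mathcal O$ is bounded, $\partial\Omega=\partial\mathcal O\times\R$ is still unbounded in $t$, so $(1,1/2,\beta)$-atoms occur at \emph{all} scales $r>0$, including a continuum of $r\gg\diam(\mathcal O)$. For such an atom $b$ on a ball $B$ of radius $r$, the product $\eta_j b$ has $\|\nabla(\eta_j b)\|_{L^\beta}\gtrsim \|(\nabla\eta_j)\,b\|_{L^\beta}$ which, using $\|b\|_{L^\beta}\lesssim r\,|B|^{-1/\beta'}$ from Poincar\'e and $|\nabla\eta_j|\sim (\diam\mathcal O)^{-1}$, is of order $r/\diam(\mathcal O)\cdot |B|^{-1/\beta'}$ — far larger than the atomic threshold $|B|^{-1/\beta'}$ once $r\gg\diam\mathcal O$. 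So multiplication by $\eta_j$ genuinely increases the atomic norm at large scales and does \emph{not} produce a single atom up to a fixed constant. Your earlier intuition ("a uniformly bounded sum of atoms at the scale of $\mathrm{supp}\,b$ plus tails \dots with geometrically decaying coefficients") is the right mechanism, and this is what must be carried out: decompose $\eta_j b$ into a sum $\sum_k \lambda_k a_k$ of atoms at parabolic scales $\sim 2^k\diam(\mathcal O)$ for $0\le 2^k\lesssim r/\diam(\mathcal O)$, using the $L^1$-smallness \eqref{defsp} and the decay estimate of \reflemma{lhalf} for the nonlocal $D^{1/2}_t$ part, and show $\sum_k|\lambda_k|\lesssim 1$ uniformly in $r$; a Calder\'on--Zygmund decomposition at scale $\diam(\mathcal O)$ in the spirit of \refproposition{CZ} and \reflemma{l27} would do this cleanly. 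With that substituted for the "finitely many scales" assertion, your retract argument goes through.
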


This follows in the unbounded case directly from \eqref{defsp}, 
    and in the bounded case using a partition of unity by splitting $\partial \mathcal O$ to local coordinate patches.

\subsection{Proof of \reftheorem{thm:Rp_extrapolation}}
If we can prove the endpoint bound
\begin{align}\label{eq:Endpoint}
    \| N[\nabla u] \|_{L^1(\partial\Omega)} \lesssim \| f \|_{\dot{\HS}^{1}_{1,1/2}(\partial\Omega)},
\end{align}
    then \reftheorem{thm:Rp_extrapolation} follows by real interpolation since the operator $f\mapsto N[\nabla u]$ is sublinear. We follow the same approach as in the  elliptic case outlined in \cite{DK}.
It suffices to prove $ \| N[\nabla u] \|_{L^1(\partial\Omega)} \le C$ for \( f= b \), 
    where \( b \) is a \( (1,1/2,\infty) \)-atom supported on a boundary ball 
    \( \Delta_0 = \Delta_R(P_0,t_0) \).
Note that \( b = ca \), where \( a \) is a \( (1,1/2,p) \)-atom. 
    Hence, by the solvability of \( (R_L)_p \) we have that,
	\begin{align*}
		\| N[\nabla u] \|_{L^1(2\Delta_0)} &\leq |\Delta_0|^{1/p'} \| N[\nabla u] \|_{L^p(\partial\Omega)}
		\\
		&\lesssim |\Delta_0|^{1/p'} \big( \| \nabla a \|_{L^p(\partial\Omega)} + \| D_t^{1/2} a \|_{L^p(\partial\Omega)} \big)
		\lesssim 1. 	
	\end{align*}
Thus it remains to show that
\begin{align}\label{eq:Atom away bound}
	\| N[\nabla u] \|_{L^1(\partial\Omega\setminus 2\Delta_0)} \lesssim 1. 	
\end{align}

By the fundamental theorem of calculus
\[ |b| \lesssim R \| \nabla b \|_{L^\infty} \lesssim R^{2-n}. \]

Without loss of generality we may assume that $b\ge 0$ since by splitting $b$ into positive and negative part we shall still have the $L^\infty$ bound of both parts by $R^{2-n}$, which is the only property of $b$ we will use below.

Next, we write \( \partial\Omega \setminus 2\Delta_0 = \cup_{j\geq 3} A_j \),
    where
\[ A_j \coloneqq \{ (P,s) \in \partial\Omega : \|(P,s) - (P_0,t_0)\| \approx 2^jR \}. \]
The sets $A_j$ are annuli  that can be covered by boundary balls \( \Delta_j^i \) of finite overlap 
    such that \( r(\Delta_j^i) \approx 2^jR \), and on the enlargement  \( \Tilde{\Delta}_j^i  \) of $\Delta_j^i$ we have $u=0$. 
This cover requires a finite fixed number of balls (that only depend on dimension $n$). 
 
By the proof of \reftheorem{thm:Reverse Holder}
\begin{equation}\label{truncN}
\fint_{\Delta_j^i} {N}^{2^jR}[\nabla u]
    \lesssim (2^jR)^{-1} \Big( \fint_{T(4\Delta_j^i)} u^2 \Big)^{1/2}
    \lesssim (2^jR)^{-1} u(V^+(\Delta^i_j)),
\end{equation}    
where $V^+(\Delta^i_j)$ is time-forward corkscrew point of the ball  $4\Delta_j^i$.   Give the positions of the balls $4\Delta_j^i$ the $u(V^+(\Delta^i_j))$ can further be controlled by 
Proposition \ref{prop:HarnackInequality} which implies that we have
$u(V^+(\Delta^i_j))\lesssim u(P_0, 2^jR, t_0+100(2^jR)^2)$.
By  \eqref{eq:GreenToMeas} and \reflemma{lemma:ParabolicGrwothEstimateForGwithalpha}
\begin{align*}
    u(P_0, 2^jR, t_0+100(2^jR)^2) 
    &\lesssim R^{2-n} \omega^{(P_0, 2^jR, t_0+100(2^jR)^2)}(\Delta_0)
    \\
    &\lesssim G(P_0, 2^jR, t_0+100(2^jR)^2,V^+(\Delta_0))
    \lesssim  R^\alpha (2^jR)^{-(n+\alpha)}.
\end{align*}
Using this in \eqref{truncN} implies that 
\begin{equation}\label{truncN2}
\fint_{\Delta_j^i} {N}^{2^jR}[\nabla u]
    \lesssim (2^jR)^{-1} R^\alpha (2^jR)^{-(n+\alpha)}\lesssim 2^{-j\alpha}(2^jR)^{-n-1}.
\end{equation}    

Thus 
\[ \int_{A_j} {N}^{2^jR}[\nabla u]
    \leq \sum_i (2^jR)^{n+1} \fint_{\Delta_j^i} {N}^{2^jR}[\nabla u]
    \lesssim 2^{-j\alpha}. \]
So far we have an estimate for the truncated non-tangential maximal function at the height $2^jR$ for $A_j$.
Consider any point $(X,t)\in\Gamma (S,\tau)$ for $(S,\tau)\in A_j$ and $\delta(X,t)\geq 2^jR$.
By  Caccioppolli and interior Harnack
\[ \Big( \fint_{C_{\delta(X,t)/4}(X,t)} |\nabla u|^2 \Big)^{1/2}
    \lesssim (2^jR)^{-1}\Big( \fint_{C_{\delta(X,t)/2}(X,t)} |u|^2 \Big)^{1/2}\]
\[    \lesssim  (2^jR)^{-1}u(P_0,\delta(X,t),t_0+100\delta(X,t)^2). \]
This can be further estimated by \eqref{eq:GreenToMeas} and \reflemma{lemma:ParabolicGrwothEstimateForGwithalpha} and taking a supremum over all points with 
$\delta(X,t)\geq 2^jR$ again yields a bound by $2^{-j\alpha}(2^jR)^{-n-1}$.

Hence by combining these two estimates we obtain that
\[ \int_{A_j} {N}[\nabla u]
    \leq \int_{A_j} \Tilde{N}^{2^jR}[\nabla u]
    + |A_j| (2^{-j\alpha}(2^jR)^{-n-1}
    \lesssim 2^{-j\alpha}. \]
  Summing over all $i\ge 3$ yields that
\[ \int_{\partial\Omega \setminus 2\Delta_0} {N}[\nabla u]
    = \sum_{j\geq 3} \int_{A_j} {N}[\nabla u]
    \lesssim \sum_{j\geq 3} 2^{-j\alpha} \lesssim_{\alpha} 1. \]

In the proof above we have used \reftheorem{thm:Reverse Holder} which implies that the argument given above is fully correct only if the domain $\mathcal O$ is unbounded. If $\mathcal O$ is bounded we can only consider sets $A_j$ up to the size of comparable to the diameter of $\mathcal O$, that is for $j\in\mathbb N$ such that $2^jR\lesssim \mbox{diam}(\mathcal O)$. Say $d=\mbox{diam}(\mathcal O)$ and let $\Delta_{10d}=\Delta_{10d}(P_0,t_0)\subset\partial\Omega$. Then the calculation above yields the estimate
\[ \int_{\Delta_{10d} \setminus 2\Delta_0} {N}[\nabla u]
    \lesssim \sum_{j\geq 3}^{[\log_2(R^{-1}d)]+4} 2^{-j\alpha} \lesssim_{\alpha} 1. \]
The set of boundary points with distance more than $10d$ to $(P_0,t_0)$  requires a separate argument.
However, for such points \reflemma{lemma:Exponential Decay} does apply implying an exponential decay in the $L^1$ norm of ${N}[\nabla u]$ and thus obvious summability. Therefore the claim as stated also holds in the case $\mathcal O$ is bounded. \qed\medskip

We record the statement \eqref{eq:Atom away bound} more formally as it is interesting on  its own right and highlights what is the main obstacle preventing us from establishing solvability of the Regularity problem in the end-point $p=1$ case. Compare this to Theorem 3.9 of \cite{DK} which states a similar result in the elliptic case.

\begin{theorem}\label{ZeroPart} Assume that $\omega\in A_{\infty}(d\sigma)$, where $\omega$ is the parabolic measure for the operator $L^*=\partial_t+\div(A^T\nabla\cdot)$ on a Lipschitz cylinder $\Omega=\mathcal O\times \R$.
Then for any $p\in (1,\infty]$ the following holds. There exists $C(p)>0$ such that for any 
homogeneous $(1,1/2,p)$-atom associated to a parabolic ball $\Delta\subset \partial\Omega$
(c. f. Definition \ref{HSato}),
 the weak solution $u$ of the equation $Lu=-\partial_tu+\div(A\nabla u)=0$ with boundary datum $f$ satisfies the estimate
 \begin{equation}\label{eqSato}
\|N[\nabla u]\|_{L^1(\partial\Omega\backslash 2\Delta)}\leq C(p).
\end{equation}
\end{theorem}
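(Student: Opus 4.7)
The statement is a formal restatement of the estimate \eqref{eq:Atom away bound} established in the course of proving \reftheorem{thm:Rp_extrapolation}; my plan is to repeat that annular argument and verify it extends uniformly over $(1,1/2,p)$-atoms for every $p\in(1,\infty]$. The hypothesis $\omega\in A_\infty(d\sigma)$ is equivalent to solvability of $(D_{L^*})_{q'}$ for some $q'>1$, which is precisely what \reftheorem{thm:Reverse Holder} needs.

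By splitting $b=b^+-b^-$ I may assume $b\geq 0$. I would then decompose
\[
\partial\Omega\setminus 2\Delta=\bigcup_{j\geq 3} A_j,\qquad A_j:=\{(P,s)\in\partial\Omega:\|(P,s)-(P_0,t_0)\|\approx 2^jR\},
\]
with $R=r(\Delta)$ and $(P_0,t_0)$ the center of $\Delta$, and cover each $A_j$ by a bounded-overlap family of boundary balls $\Delta_j^i$ of parabolic radius $\approx 2^jR$ whose enlargements are disjoint from $\Delta$ (so that $u=0$ on each such enlargement). Applying the reverse Hölder estimate \eqref{zsaz} of \reftheorem{thm:Reverse Holder}, followed by \refproposition{prop:Sup to L2 bound} and the interior Harnack inequality, yields
\[
\fint_{\Delta_j^i} N^{2^jR}[\nabla u]\lesssim (2^jR)^{-1}\Big(\fint_{T(4\Delta_j^i)}u^2\Big)^{1/2}\lesssim (2^jR)^{-1}\,u(Y_j^i),
\]
where $Y_j^i$ is a forward-in-time corkscrew point at parabolic distance $\approx 2^jR$ from $(P_0,t_0)$.

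The crux of the argument is then the pointwise bound $u(Y_j^i)\lesssim R^{\alpha}(2^jR)^{-(n+\alpha)}$, which matches the $p=\infty$ case from \reftheorem{thm:Rp_extrapolation} and, once summed over the cover and $j\geq 3$, delivers the geometric series $\sum_j 2^{-j\alpha}<\infty$. Via the Poisson representation $u(Y_j^i)=\int_\Delta b\,d\omega^{Y_j^i}$, when $p=\infty$ one has the trivial estimate $\|b\|_\infty\lesssim R^{-n}$ from condition (ii) of \refdef{HSato} plus the fundamental theorem of calculus, so $u(Y_j^i)\lesssim R^{-n}\omega^{Y_j^i}(\Delta)$; \refproposition{prop:Grean To Meas} together with \reflemma{lemma:ParabolicGrwothEstimateForGwithalpha} then gives the claimed decay. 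For $1<p<\infty$ I use Hölder's inequality to write $u(Y_j^i)\leq \|b\|_{L^p(\Delta)}\,\|k^{Y_j^i}\|_{L^{p'}(\Delta)}$, where $k^{Y_j^i}=d\omega^{Y_j^i}/d\sigma$. The first factor is $\lesssim R\cdot|B|^{-1/p'}$ by Poincaré on the support ball and condition (ii); the second factor I control by $\omega^{Y_j^i}(\Delta)\,|\Delta|^{-1/p}$ via either reverse Hölder for the Poisson kernel (which $\omega\in A_\infty(d\sigma)$ supplies in its open-ended range), or via the boundary Harnack principle, which forces $k^{Y_j^i}$ to be essentially constant on $\Delta$ since $Y_j^i$ lies at parabolic distance $\gg R$ from $\Delta$. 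The exponents combine to recover $R^{-n}\omega^{Y_j^i}(\Delta)$, after which \reflemma{lemma:ParabolicGrwothEstimateForGwithalpha} produces the decay. The contribution of cone points of depth $\delta(X,t)\geq 2^jR$ over $A_j$ is handled identically, with interior Caccioppoli replacing the reverse Hölder and the same Green function decay being used.

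The main obstacle is exactly this $L^\infty$ step on $b$ when $p$ is finite, and it is resolved by the $A_\infty$/boundary Harnack argument for the Poisson kernel sketched above. In the bounded case $\diam\mathcal O<\infty$ the annular sum terminates at $j$ with $2^jR\approx\diam\mathcal O$, and the remaining far tail is controlled by the exponential decay of \reflemma{lemma:Exponential Decay}, exactly as at the end of the proof of \reftheorem{thm:Rp_extrapolation}.
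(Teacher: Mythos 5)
The proposal takes a genuinely different route from the paper. The paper first reduces to $(1,1/2,\infty)$-atoms via \reflemma{l27} (any $(1,1/2,p)$-atom is an $\ell^1$-sum of $(1,1/2,\infty)$-atoms supported in $m\Delta$, obtained from the Calder\'on--Zygmund construction of \refproposition{CZ}), runs the annular argument on each $\infty$-atom to get control of $N[\nabla u]$ on $\partial\Omega\setminus 2m\Delta$, and then closes the remaining annulus $2m\Delta\setminus 2\Delta$ with a separate elementary argument (boundary Caccioppoli, boundary H\"older, \reftheorem{thm:Reverse Holder}). You instead try to run the annular argument directly on the $(1,1/2,p)$-atom, replacing the $L^\infty$ bound on $b$ by H\"older, $u(Y_j^i)\le\|b\|_{L^p(\Delta)}\|k^{Y_j^i}\|_{L^{p'}(\Delta)}$.

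That step is where the argument has a real gap. The bound you need, $\|k^{Y_j^i}\|_{L^{p'}(\Delta)}\lesssim\omega^{Y_j^i}(\Delta)|\Delta|^{-1/p}$, requires the Poisson kernel with far pole to lie in $L^{p'}(\Delta,d\sigma)$ with the sharp scaling, for \emph{every} $p'<\infty$. But $\omega\in A_\infty(d\sigma)$ only yields a reverse H\"older inequality for some finite exponent $q>1$ depending on the $A_\infty$ constant, so your first fallback fails whenever $p'>q$, i.e.\ exactly when $p$ is close to $1$. The boundary Harnack fallback does not rescue this either: comparison/boundary Harnack gives pointwise control of the \emph{ratio} $k^{Y}/k^{Y'}$ of two kernels with far poles, not a pointwise bound $k^Y\lesssim\omega^Y(\Delta)/\sigma(\Delta)$; the latter is the $A_1$ (equivalently $B_\infty$) condition, which is strictly stronger than $A_\infty$, and even for the Laplacian on a Lipschitz domain the kernel need not be bounded. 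This is precisely why the paper routes the proof through the $\infty$-atom case and \reflemma{l27} — a purely real-analytic decomposition with no harmonic-measure input — at the cost of the additional annulus $2m\Delta\setminus 2\Delta$, which your outline also does not address.
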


\begin{proof} The proof as given above holds for $(1,1/2,\infty)$-atoms. It uses boundedness of such atom and thus does not apply to $(1,1/2,p)$-atoms for $p<\infty$. But we may assume that $f\ge 0$ as it can be split as the sum of $f^+-f^-$ with each part being modulo a constant again an atom.
Thanks to Lemma \ref{l27} we know that any $(1,1/2,p)$-atom $f$, $p>1$ can be written as as sum 
$$f=\sum_i \mu_ia_i^\infty,$$ 
where $a_i^\infty$ are  $(1,1/2,\infty)$-atoms and $\sum_i |\mu_i|\le M_p$ for some large constant $M_p>1$
independent of $f$. Furthermore, thinking about the support of the atoms $a_i^\infty$ the construction we gave in the proof of Lemma \ref{l27} implies that that $a_i^\infty$ will be supported in some enlargement of $\Delta$,
say $m\Delta$ for some $m>1$ which depends of the choice of $\alpha$ in the proof of Lemma \ref{l27}.
Thus if we do not make any changes in the argument given we will obtain for each $a_i^\infty$ the estimate
\eqref{eq:Atom away bound}, i.e.:
\begin{align}\label{eq:Atom away bound2}
	\| N[\nabla u_i] \|_{L^1(\partial\Omega\setminus 2m\Delta)} \le K,
\end{align}
making the constant in the estimate explicit. Here $u_i$ solves our PDE with boundary data $a_i^\infty$. It follows that
\begin{align}\label{eq:Atom away bound3}
	\| N[\nabla u] \|_{L^1(\partial\Omega\setminus 2m\Delta)} \le \sum_i|\mu_i|\| N[\nabla u_i] \|_{L^1(\partial\Omega\setminus 2m\Delta)}\le M_pK.
\end{align}
This is not quite \eqref{eqSato} as we have removed a larger set from $\partial\Omega$, namely $2m\Delta$
instead of just $2\Delta$ as claimed. However, this is only a small technical issue. Thinking about the set
$2m\Delta\setminus 2\Delta$ we are in the regime where $f$ vanishes, it actually vanishes on an enlargement of this set as well as it clearly vanishes on $4m\Delta\setminus \Delta$.

Hence we get that Proposition \ref{prop:Boundary Cacciopolli} applies as well as the boundary H\"older continuity \eqref{eq:BoundaryHoelderNonnegativeCorkscrewEstimate}. It follows that
$u$ is bounded on the union of non-tangential cones 
$$\mathcal S:=\bigcup_{(q,\tau)\in 3m\Delta\setminus (3/2)\Delta}\Gamma(q,\tau),$$
where the bound for the near part (i.e. points $(X,t)\in\mathcal S$ with $\delta(X,t)\lesssim\mbox{diam}(\Delta)=:R$) is implied by 
\eqref{eq:BoundaryHoelderNonnegativeCorkscrewEstimate} which requires bound on value of $u$
at some corkscrew point $V^+$ of $5m\Delta$ but that bound is implied by The boundary Poincar\'e inequality
(Proposition \ref{prop:Boundary Poincare}) and the bound \eqref{eq:Atom away bound3} as the point $V^+$ is in a region covered by the estimates for $N[\nabla u]$ on the set $\partial\Omega\setminus 2m\Delta$.
The bound for the away part of the set $\mathcal S$  is easier and follows directly from \eqref{eq:Atom away bound3}.

Hence we can claim for all $(X,t)\in\mathcal S$ that $|u|\lesssim R(R^{-n-1}) \| N[\nabla u] \|_{L^1(\partial\Omega\setminus 2m\Delta)}=R^{-n}M_pK$. Here the normalization factor $R^{-n-1}$ comes from the fact that $R^{n+1}$ is comparable to the surface area of $m\Delta$. 

The final piece of the puzzle is Theorem \ref{thm:Reverse Holder} which we apply on balls $B_i$ that cover the boundary $2m\Delta\setminus 2\Delta$ of radius comparable to $R$ such that $4B_i\subset 3m\Delta\setminus (3/2)\Delta$ and balls $B_i$ have finite overlap.  Thanks to the bound for $|u|$ on $\mathcal S$ we have by \eqref{zsaz} on each $B_i$
$$     \fint_{B_i} {N}[\nabla u]
        \lesssim \fiint_{T(2B_i)} |\nabla u|\lesssim R^{-1}\left(\fiint_{T(3B_i)} |u|^2\right)^{1/2}\lesssim
        R^{-1}R^{-n}M_pK=R^{-n-1}M_pK , $$
where in the second step we have used the boundary Caccioppoli inequality (Proposition \ref{prop:Boundary Cacciopolli}).        
Equivalently, $\int_{B_i} {N}[\nabla u] \lesssim M_pK$ and this after summing over all $i$ we get that
$$
	\| N[\nabla u] \|_{L^1(2m\Delta\setminus 2\Delta)} \le C M_pK.
$$
From \eqref{eq:Atom away bound3} and the above estimate \eqref{eqSato} follows.
\end{proof}


\end{document}